\def\frak{\mathfrak}
\def\Cal{\mathcal}
\let\i=\iota
\def\bu{\bullet}
\def\wt#1{\widetilde{#1}}
\def\type#1#2{
  \big[\! \begin{smallmatrix} #1\\#2
  \end{smallmatrix} \!\bigr] }
\def\supertiny{\fontsize{1pt}{1pt}\selectfont}
\newlength\celldim
\newlength\fontheight
\newlength\extraheight
\def\squaretab{
  \setlength\celldim{1.7em}
  \settoheight\fontheight{.}
  \setlength\extraheight{\celldim - \fontheight}
  \newcolumntype{S}
  {@{}
  >{\centering\arraybackslash}
  p{\celldim}
  <{\rule[-0.5\extraheight]{0pt}%
  {\fontheight + \extraheight}}
  @{}}
}
\def\hlower#1#2{
  \newlength\shift \settowidth{\shift}{#2}
  \hspace{-#1} #2 \hspace{-\shift} \hspace{#1}}
\def\myng#1{{\Yvcentermath1\scriptsize\yng(#1{})}}
\def\rmh{{\rm h}}
\def\trace{{\rm trace}}
\def\bbD{\mathbb{D}}
\def\bbH{\mathbb{H}} 
\def\bbR{\mathbb{R}}
\def\bbS{\mathbb{S}}
\def\E{\mathbb{E}}
\def\R{\mathbb{R}}
\def\W{\mathbb{W}}
\def\X{\mathbb{X}}
\def\Y{\mathbb{Y}}
\def\Z{\mathbb{Z}}
\def\N{\mathbb{N}}
\def\cA{\mathcal{A}}
\def\cB{\mathcal{B}}
\def\cC{\mathcal{C}}
\def\cD{\mathcal{D}}
\def\cE{\mathcal{E}}
\def\cI{\mathcal{I}}
\def\cK{\mathcal{K}}
\def\cV{\mathcal{V}}
\def\cT{\mathcal{T}}
\def\cL{\mathcal{L}}
\def\cW{\mathcal{W}}
\def\ga{\gamma}
\def\de{\delta}
\def\si{\sigma}
\def\ph{\varphi}
\def\ps{\psi}
\def\om{\omega}
\def\De{\Delta}
\def\Ph{\Phi}
\def\Up{\Upsilon}
\def\na{\nabla}
\def\form#1{\mathbf{#1}}
\newcommand{\id}{\operatorname{id}}
\newcommand{\im}{\operatorname{im}}
\newcommand{\End}{\operatorname{End}}
\newcommand{\Mat}{\operatorname{Mat}}
\newcommand{\lpl}{
  \mbox{$
  \begin{picture}(12.7,8)(-.5,-1)
  \put(2,0.2){$+$}
  \put(6.2,2.8){\oval(8,8)[l]}
  \end{picture}$}}
\renewcommand{\vec}[1]{\mathbf{#1}}
\newcommand{\wh}{\widehat}
\newcommand{\newc}{\newcommand}
\newtheorem{theorem}{Theorem}[section]
\newtheorem{lemma}[theorem]{Lemma}
\newtheorem*{lemma*}{Lemma}
\newtheorem{proposition}[theorem]{Proposition}
\newtheorem{corollary}[theorem]{Corollary}
\theoremstyle{remark}
\newtheorem{remark}[theorem]{\rm\bf Remark}
\newtheorem*{remark*}{\rm\bf Remark}
\newtheorem{definition}[theorem]{\rm\bf Definition}
\newcommand{\ce}{{\Cal E}}
\newcommand{\nd}{\nabla}
\newcommand{\Rho}{P}
\newcommand{\nn}[1]{(\ref{#1})}
\newcommand{\bg}{\mbox{\boldmath{$ g$}}}
\newcommand{\J}{J}
\newcommand{\ct}{{\Cal T}}
\newcommand{\bC}{\mbox{\boldmath{$C$}}}
\newc{\aR}{\mbox{\boldmath{$ R$}}}
\newc{\aS}{\mbox{\boldmath{$ S$}}}
\newc{\aDeR}{\mbox{\boldmath{$ U$}}_B{}^P{}_C{}^Q}
\newc{\aDe}{\mbox{\boldmath$ \Delta$}}
\newc{\aNd}{\mbox{\boldmath$ \nabla$}}
\newc{\aK}{\mbox{\boldmath{$ K$}}}
\newc{\aL}{\mbox{\boldmath{$ L$}}}
\def\sideremark#1{\ifvmode\leavevmode\fi\vadjust{\vbox to0pt{\vss
 \hbox to 0pt{\hskip\hsize\hskip1em
 \vbox{\hsize3cm\tiny\raggedright\pretolerance10000
 \noindent #1\hfill}\hss}\vbox to8pt{\vfil}\vss}}}%
\def\idx#1{{\em #1\/}}
\author{A. Rod Gover and Josef \v Silhan}
\email{gover@math.auckland.ac.nz} 
\title{Higher symmetries of the conformal powers of the Laplacian on conformally flat manifolds}
\begin{document}

\begin{abstract}
  On locally conformally flat manifolds we describe a construction
  which maps generalised conformal Killing tensors to differential
  operators which may act on any conformally weighted tensor bundle;
  the operators in the range have the property that they are
  symmetries of any natural conformally invariant differential
  operator between such bundles. These are used to construct all
  symmetries of the conformally invariant powers of the Laplacian
  (often called the GJMS operators) on manifolds of dimension at least
  3. In particular this yields all symmetries of the powers of the
  Laplacian $\Delta^k$, $k\in \mathbb{Z}>0$, on Euclidean space
  $\mathbb{E}^n$. The algebra formed by the symmetry operators is
  described explicitly.
\end{abstract}

\address{ARG: Department of Mathematics\\
  The University of Auckland\\
  Private Bag 92019\\
  Auckland 1142\\
  New Zealand, and \\
~Mathematical Sciences Institute, 
Australian National University, ACT 0200, Australia} \email{gover@math.auckland.ac.nz}
\address{JS: Department of Mathematics and Statistics \\
Masaryk University \\
Kotl\'a\v{r}sk\'a 2 \\
611 37 Brno \\
Czech Republic} \email{silhan@math.muni.cz}

\thanks{ARG gratefully acknowledges support from the Royal Society of
New Zealand via Marsden Grants 06-UOA-029 and 10-UOA-113. JS was supported
by the Max-Planck-Institute f\"{u}r Mathematik in Bonn and by the Grant 
agency of the Czech republic under the grant P201/12/G028. The support of 
MPIM and GACR are gratefully acknowledged.}

\maketitle

\pagestyle{myheadings}
\markboth{Gover \& \v Silhan}{Higher symmetries of the GJMS operators}

\section{Introduction}

Given a differential operator $P$, say on functions, it is natural to
consider smooth differential operators which locally preserve the
solution space of $P$. A refinement is to seek differential operators
$S$ with the property that $P\circ S=S'\circ P$, for some other
differential operator $S'$. In this case we shall say that $S$ is a
symmetry of $P$. On Euclidean $n$-space $\mathbb{E}^n$ with $n\geq 3$
the space of first order symmetries of the Laplacian $\Delta$ is
finite dimensional with commutator subalgebra isomorphic to
$\frak{so}(n+1,1)$, the Lie algebra of conformal motions of
$\mathbb{E}^n$. Second order symmetries have applications in the
problem of separation of variables for the Laplacian, see \cite{M-book} and
references therein; on $\mathbb{E}^3$ the second order symmetries were
classified by Boyer et al.\ \cite{BKM}.  

Symmetries are closely related to conformal Killing tensors and their
generalisations, see Theorem \ref{main} below. Such operators also
play a role in physics \cite{Mik32,Vas38}.  Partly motivated by these
links, Eastwood has recently given a complete algebraic description of
the symmetry algebra for the Laplacian on $\mathbb{E}^{n\geq 3}$
\cite{EaLap}.  His treatment uses conformal geometry and in particular
a treatment of the conformal Laplacian due to Hughston and Hurd
\cite{HH21} based on the classical model of the conformal $n$-sphere
as the projective image of an indefinite quadratic variety in
$\mathbb{R}^{n+2}$.  There are close links to the Fefferman-Graham
ambient metric \cite{FGast,FGnewa}, which provides a curved version of
this model, and the ideas of Maldacena's AdS/CFT correspondence
\cite{Mald,GKP,W} (as explained in \cite{EaLap}).  Eastwood's work was
extended in \cite{EaLeLap2}, via similar techniques, where the authors
found the symmetry algebra for $\Delta^2$ on $\mathbb{E}^{n\geq 3}$.

Here the first main result of the article is a simultaneous treatment of all
powers of the Laplacian on pseudo-Euclidean space $\mathbb{E}^{s,s'}$
(i.e. $\mathbb{R}^{s+s'}$ equipped with a constant signature $(s,s')$
metric) with $s+s'\geq 3$; we obtain an explicit construction of all
symmetries and a description of the algebra these generate. See
Theorem \ref{main}, and Theorem \ref{algmain}.  (In lower dimensions a
corresponding result is not to be expected as, in that case, the space
of conformal Killing vectors is infinite dimensional). As will shortly
be clear, the problem is fundamentally linked to conformal geometry.
Thus it is natural to also formulate and treat analogous questions for
the conformally invariant generalisations $P_k$ of the powers
$\Delta^k$ ($k\in \mathbb{Z}_{>0}$) on conformally flat manifolds, and
we do this; via Theorem \ref{csymc} and surrounding discussion we see
that the algebra is again described by Theorem \ref{algmain}. In
dimension 4 the operators $P_k$ were discussed in \cite{JV}.
Conformally curved versions in general dimensions ($n\geq 2k$ if even)
are due to Paneitz ($k=2$) \cite{Pan} and Graham-Jenne-Mason-Sparling
\cite{GJMS}, and have been the subject of tremendous recent interest
in both the mathematics and physics community \cite{Chang,DD,Juhl}. For
convenience we shall refer to these operators as the GJMS operators.


A main point of the current article is to develop a universal approach
to the problem of operator symmetries; the constructions and theory
here are designed to be easily adapted to study the symmetries of
other classes of differential operators.  Indeed with minor adaption
our techniques also apply to the entire class of parabolic geometries.
Firstly, rather than work on a higher dimensional ``ambient'' manifold
(which is an idea well developed only for conformal geometry and a few
other structures), we calculate directly on the $n$ dimensional space
and use tractor calculus, many tools of which apply
simultaneously to all parabolic geometries
\cite{BEG,Gosrni,CapGotrans}.  Using this machinery we construct a map
which takes solutions of certain overdetermined PDE (solutions called
generalised conformal Killing tensors) to differential operators which
have the universality property that they are symmetries for {\em any}
conformally invariant operator between irreducible bundles.  This is
Theorem \ref{cons}.  These universal symmetry operators form an
algebra under formal composition; by construction this is a quotient
of the tensor algebra $\bigotimes \frak{so}(s+1,s'+1)$.  On the other
hand for the case of GJMS operators, Theorem \ref{csymc} states that,
conversely, all symmetries arise from the operators in this algebra.
Determining the algebra of symmetries of a given order $2k$ GJMS
operator $P_k$ then proceeds in two steps.  The order $2k$ determines
the domain (density) bundle (for $P_k$ and hence) on which the
universal symmetry operators should act. From the latter we obtain an
ideal of identities satisfied by the universal symmetries; the ideal
is specific to the domain. This is the subject of Theorem
\ref{dec2can}. A further ideal is generated by symmetries that are
trivial in a sense to be made precise below, see Theorem
\ref{extra}. The result is an explicit description in Theorem
\ref{algmain} of the ideal, the quotient of $\bigotimes
\frak{so}(s+1,s'+1)$ by which yields the symmetry algebra of $P_k$.

We thank J.\ Kadourek (of Masaryk University) for discussions, and for
ultimately providing us with a proof of a conjecture of ours, which
now forms Theorem \ref{reg}. We would also like to thank the referees
for their perceptive comments which have led to useful adjustments.

\section{The main theorems}\label{maint}

\subsection{Symmetries and triviality}

Throughout we shall retrict to conformally flat pseudo-Riemannian
manifolds $(M,g)$ of dimension $n\geq 3$ and signature $(s,s')$, or
the conformal structures $(M,[g])$ that these determine. In the spirit
of Penrose's {\em abstract index notation} \cite{PR1}, we shall denote
write $\ce^a$ as an alternative notation for $TM$ and $\ce_a$ for the
dual bundle $T^*M$. Thus for example
$\ce_{ab}=\otimes^2T^*M$. According to context we may also use {\em
  concrete} indices from time to time. That is indices refering to a
frame. All manifolds, structures, functions and tensor fields will be
taken to be smooth (i.e.\ to infinite order) and all differential
operators will be linear with smooth coefficients. Since our later
treatment generalises easily, we define here the notion of symmetry in
greater generality than is strictly needed for our main results. This
also serves to indicate the general context for the devolopments.

Suppose that $P:\mathcal{V}\to \mathcal{W}$ is a smooth differential
operator between (section spaces of) irreducible bundles.  (In our
notation we shall not distinguish bundles from their smooth section spaces.) We
shall say that linear differential operators $S: \mathcal{V}\to
\mathcal{V}$ and $S': \mathcal{W}\to \mathcal{W}$ form a $(S,S')$ a
\idx{symmetry (pair)} of $P$ if the operator compositions 
$PS$ and $S'P$ satisfy
$$
PS=S'P.
$$  An example is the pair $(TP,PT)$, where $T$ is a differential
operator $T:\mathcal{W}\to \mathcal{V}$. However for obvious reasons
such symmetries shall be termed {\em trivial}.

Following the treatment of $\Delta$ and $\Delta^2$ of
\cite{EaLap,EaLeLap2}, we note that there is an algebraic structure on the
symmetries modulo trivial symmetries as follows.  First the symmetries
of $P$ form a vector space under the obvious operations.  Then if
$(S_1,S'_1)$ and $(S_2,S'_2)$ are symmetries then so too is the
composition $(S_1S_2,S'_1S'_2)$.  So the symmetries of $P$ form an
algbera $\tilde{\mathcal{S}}$.  Next we say that two symmetries
$(S_1,S'_1)$ and $(S_2,S'_2)$ are equivalent, $(S_1,S'_1)\sim
(S_2,S'_2)$, if and only if $(S_1-S_2,S'_1-S'_2)$ is a trivial
symmetry. It is easily verified that trivial symmetries form a
two-sided ideal in the algebra $\tilde{\mathcal{S}}$ and the
quotient by this yields an algebra $\mathcal{S}$. For the case that 
$P$ is a GJMS operator it is this algebra  that we shall study in detail. 

To simplify our discussion we shall often work with just the first
operator $S: \mathcal{V}\to \mathcal{V}$ in a symmetry pair. That is
an operator $S: \mathcal{V}\to \mathcal{V}$ shall be called a symmetry
if there exists some $S': \mathcal{W}\to \mathcal{W}$ that makes
$(S,S')$ a symmetry as above. (In fact for the main class of operators
we treat it is easily verified that $S'$ is uniquely determined by
$S$.) Note that with this language, and in the class of cases satisfying 
$\mathcal{V}= \mathcal{W}$, the composition
$PS$ is a trivial symmetry if and only if $S$ is a symmetry.

\subsection{Symmetries of $\Delta^k$ on $\mathbb{E}^{s,s'}$}
We shall write $\mathbb{E}^{s,s'}$ to mean $\mathbb{R}^n$, $n=s+s'$,
equipped with the standard flat diagonal signature $(s,s')$ metric
$g$; in the $s=n$, $s'=0$ case this is $n$-dimensional Euclidean space.
Here and throughout we shall make the restriction $n\geq 3$.  In this
setting the Levi-Civita connection $\nabla$ is flat and, with tensors
expressed in terms of the standard $\mathbb{R}^n$ coordinates $x^i$,
the action of $\nabla_i$ on these agrees with $\partial/\partial x^i$.
We shall use the metric $g_{ij}$ and its inverse $g^{ij}$ to lower and
raise indices in the usual way.  For example, and capturing also our sign
convention for the Laplacian, $\Delta=g^{ij}\nabla_i \nabla_j=
\nabla^i\nabla_i$. (We use the summation convention here and below
without further mention.)

Recall that a vector field $v$ is a conformal Killing field (or
infinitesimal conformal isometry) if $\cL_v g= \rho g$ for some
function $\rho$. Otherwise written, this equation is 
$$
\nabla^i v^j + \nabla^j v^i= \rho g^{ij},
$$
and so, for solutions, $\rho=2 {\rm div~} v /n$.
Suppose now that $\ph$ is a symmetric trace-free covariant tensor satisfying 
\begin{equation}\label{ck}
\nabla^{(i} \cdots \nabla^{l}\ph^{m \cdots n)}= g^{(ij}\rho^{k \cdots n)}, \quad \mbox{ with } \quad |\{ i,\cdots ,l\}|~\mbox{ an odd integer}
\end{equation}
for some tensor $\rho^{k \cdots n}$, and where $\phi^{(i\cdots n)}$
indicates the symmetric part of the tensor $\phi^{i\cdots n}$.
Then, following \cite{EaLap}, we shall term $\ph$ a {\em generalised conformal 
Killing tensor}.

In Sections \ref{consec} below we shall 
construct a canonical 1-1 map
\begin{equation}\label{cans}
\ph \mapsto (S_\ph,S'_\ph)
\end{equation}
which takes solutions of \nn{ck} to symmetries of $\Delta^k$, see
Definition \ref{cansym} and Theorem \ref{cons} (which, in fact, deal
with a far more general setting). Although we defer the construction of
\nn{cans}, let us already term $(S_\ph,S'_\ph)$ the {\em canonical
  symmetry} corresponding to $\ph$.  Our main classification result is
that all symmetries of $\Delta^k$ arise this way, and this is established in Theorem \ref{class}. Putting these results together, on
$\mathbb{E}^{s,s'}$ we have the following.
\begin{theorem} \label{main} Let us fix $k\in \mathbb{Z}_+$.  
For the Laplacian power $\Delta^k$ on $\mathbb{E}^{s,s'}$ we have the
following.  For each  $\ph$,
a solution of \nn{ck}, there is
canonically associated a symmetry $(S_\ph,S'_\ph)$ for
$\Delta^k$ with $S_\ph$ and $S'_\ph$ each having leading term
$$
\ph^{a_1 \ldots a_p} (\na_{a_1} \cdots \na_{a_p}) \De^r .
$$
 $p\in \mathbb{Z}_{\geq 0}$,
$r\in\{ 0,1,\cdots ,k-1\}$.

Modulo trivial symmetries, any symmetry of $\Delta^k$ is a linear
combination of such pairs $(S_\ph,S'_\ph)$, with various solutions
$\ph$ of \nn{ck} as above. 
\end{theorem}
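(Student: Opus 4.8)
The plan is to prove Theorem~\ref{main} in two directions. The forward direction --- that each solution $\ph$ of \nn{ck} yields a symmetry $(S_\ph,S'_\ph)$ of $\Delta^k$ with the stated leading term --- is to be obtained by specialising the general construction of Theorem~\ref{cons} (developed in Section~\ref{consec}) to the flat model $\mathbb{E}^{s,s'}$, with domain bundle the density bundle appropriate to $P_k=\Delta^k$. Here I would first recall that on $\mathbb{E}^{s,s'}$ the conformally invariant power of the Laplacian, acting on the correctly weighted density bundle, coincides up to a nonzero scale with $\Delta^k$; then the universal symmetry operator $S_\ph$ attached to $\ph$ automatically satisfies $\Delta^k S_\ph = S'_\ph \Delta^k$. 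The leading-term claim should follow by tracking the principal part through the construction: a generalised conformal Killing tensor $\ph$ of rank $p$ produces, via repeated application of the tractor-$D$ operator and contraction, a differential operator whose top-order part is $\ph^{a_1\cdots a_p}\na_{a_1}\cdots\na_{a_p}\De^r$ for $r\in\{0,\dots,k-1\}$, the range of $r$ coming precisely from the order $2k$ of $P_k$.

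\medskip

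The converse --- that modulo trivial symmetries every symmetry of $\Delta^k$ is a linear combination of canonical symmetries $(S_\ph,S'_\ph)$ --- is the substantive half, and I would defer its proof to Theorem~\ref{class} and only here explain the strategy. Given an arbitrary symmetry $(S,S')$ of $\Delta^k$, the idea is to argue by induction on the order of $S$. One analyses the leading symbol of $S$: the symmetry equation $\Delta^k S = S'\Delta^k$ forces an algebraic condition on this symbol, which after decomposing into trace-free parts with respect to $g$ yields that it is governed by a solution $\ph$ of the generalised conformal Killing equation \nn{ck} together with a lower-order $\De$-power. Subtracting the corresponding canonical symmetry $S_\ph\De^r$ reduces the order of $S$ (after absorbing a trivial symmetry of the form $T\Delta^k$), and one iterates. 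The base case is where $S$ has order less than the order needed to produce any nontrivial leading Killing tensor, at which point $S$ itself is trivial.

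\medskip

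I expect the main obstacle to be the converse step, and specifically the passage from the algebraic symbol condition to the conclusion that the symbol genuinely arises from a solution of the overdetermined system \nn{ck}: one must show not merely that the leading symbol lies in the right $\mathrm{GL}$- (or $O(g)$-) representation, but that the accompanying lower-order data assemble consistently so that the subtraction of a canonical symmetry lowers the order without reintroducing obstructions. Controlling the interaction between the symbol analysis and the ideal of trivial symmetries $T\Delta^k$ --- i.e.\ ensuring the inductive reduction is well-defined modulo triviality --- is the delicate point; this is precisely what Theorem~\ref{class}, together with the identities of Theorems~\ref{dec2can} and~\ref{extra}, is designed to handle. The forward direction, by contrast, should be essentially a bookkeeping exercise once Theorem~\ref{cons} is in place.
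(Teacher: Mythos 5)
Your overall two-part architecture (construct canonical symmetries via Theorem~\ref{cons}, then argue a converse by subtraction and induction) matches the paper's, but the sketch of the converse misses the actual mechanism and mis-locates the technical content. First, the induction is \emph{not} on the differential order of $S$. Subtracting a single canonical symmetry $S_\ph$ does not generally lower the order: if $S$ has order $m$, the top-order part decomposes as a sum of terms $\ph_r^{m-2r}(\odot^{m-2r}\na)\De^r$ for several $r$, and removing one of these leaves the order unchanged. What the paper inducts on is the refined ordering $\lhd$ of (\ref{ord}), which compares coefficients first by \emph{level} $\ell(\ph_r^p)=p+r$ and then by formal order $p+2r$. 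Proposition~\ref{can:prop} is precisely what guarantees that subtracting $S_\ph$ strictly decreases the greatest coefficient with respect to $\lhd$ (not the order), and this is not a ``bookkeeping exercise'' --- it requires the homogeneity analysis of tractor projector strings carried out there.

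Second, and more seriously, the heart of the converse is Proposition~\ref{biggest}: the greatest coefficient (w.r.t.\ $\lhd$) of a normal symmetry must satisfy the generalised conformal Killing equation. You flag ``the passage from the algebraic symbol condition to the conclusion that the symbol arises from a solution of (\ref{ck})'' as the delicate point, which is right, but then defer it to Theorems~\ref{dec2can} and~\ref{extra} --- those belong to the algebra-structure result Theorem~\ref{algmain}, not to Theorem~\ref{main}. The actual resolution is combinatorial: imposing $\De^k S=S'\De^k$ and collecting terms of fixed type yields a triangular-style linear system in the unknowns $(\na^{2r+1}\ph_r^p)|_\boxtimes,\ (\na^{2r}\ph_{r-1}^{p+1})|_\boxtimes,\ldots$, whose coefficient matrix is $\bC(k,d)$ from (\ref{mat}), a band matrix built from $2^s\binom{k}{s}$. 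Establishing that these matrices are nonsingular is Theorem~\ref{reg}, an independent and nontrivial lemma (the upper-triangularity argument only covers $d=0$; for $d\geq 1$ one needs the row/column reduction scheme in its proof). Without identifying and proving this regularity, the ``algebraic condition forces the Killing equation'' step is unsupported, and your induction cannot get started.
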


\subsection{Conformal geometry and the GJMS operators} 
Although the question of symmetries of $\Delta^k$ is not phrased in
terms of conformal geometry, it turns out that there is a deep
connection. According to the Theorem \ref{main} above, all symmetries of
$\Delta^k$ arise from the solutions of the equations \nn{ck}. As we
shall explain, these equations are each conformally covariant, and in
fact this class of equations can only be fully understood via
consideration of their conformal properties.  First note that we may
alternatively write the equation \nn{ck} as
$$
\na_{(b_0} \cdots \na_{b_{2r}} \ph_{a_1 \ldots a_p)_0}=0
$$ where we have lowered the indices for convenience and $(\cdots)_0$
indicates the trace-free part over the enclosed indices.  For a given
(say symmetric) tensor taking the trace-free part is a conformally
invariant notion.  Then for example in the case of $r=0$ this is the
well known conformal Killing tensor operator.
In that case, if (on any pseudo-Riemannian manifold $(M,g)$ of the dimension 
$n$) we replace the metric $g$ with the conformally related
$\widehat{g}:=e^{2\Upsilon} g$, where $\Up\in C^\infty (M)$, and
replace $\ph$ with $\widehat{\ph}:=e^{2p\Upsilon}\ph$ then
$$
\na^{\widehat{g}}_{(b_0}  \widehat{\ph}_{a_1 \ldots a_p)_0}=
e^{2p\Up}\na_{(b_0} \ph_{a_1 \ldots a_p)_0}.
$$ 

One may think of $\ph$ here as representing a {\em density} valued
tensor of weight $2p$. Recall that on a smooth manifold the density
bundles $\ce[w]$ are the bundles associated to the frame bundle by
1-dimensional (real) representations arising as the roots (or powers)
of the square of the determinant representation. These representations
and the associated bundles are thus naturally parametrised by {\em
weights} $w$ from $\mathbb{R}$. These weights are normalised so that
$\ce[-2n]\cong (\Lambda^{n}T^*M)^2$, and with this normalisation the
weights are often called {\em conformal weights}.  Note that
$(\Lambda^{n}T^*M)^2$ is trivialised by a choice of metric and hence so
are all the line bundles $\ce[w]$.  There is a section $\tilde{\ph}$
of $\ce_{(a_1\cdots a_p)_0}[2p]=\ce_{(a_1\cdots a_p)_0}\otimes
\ce[2p]$ which, in the trivialisation of $\ce[2p]$ afforded by $g$,
has the component $\ph$, while $\tilde{\ph}$ has the component
$\widehat{\ph}= e^{2p} \ph$ with respect to the trivialisation from
$\widehat{g}$.  Since the Levi-Civita connection (for any metric $g$)
may be viewed as a connection on the principal frame bundle it follows
immediately that it yields a connection on density
weighted tensor bundles.  Thus dropping the tilde, for $\ph\in
\ce_{(a_1\cdots a_p)_0}[2p]$ we have $ \na^{\widehat{g}}_{(b_0}
\ph_{a_1 \ldots a_p)_0}= \na_{(b_0} \ph_{a_1 \ldots a_p)_0}. $ This
means that the operator descends to a well defined differential
operator on a conformal manifold $(M,c)$.  Here $(M,c)$ means a
manifold equipped with just an an equivalence class of conformally
related metrics: if $g,\widehat{g}\in c$ then $\widehat{g}=e^{2\Up}g$
for some $\Up\in C^\infty(M)$.

Henceforth, it will be convenient to use the notation and language of
conformal densities, for further details and conventions see
e.g.\ \cite{CapGoamb} or \cite{GoPetLap}.  In particular below we shall use the {\em
  conformal metric} $\boldsymbol{g}_{ab}$ to raise and lower
indices. On a conformal manifold this is a tautological section of
$\ce_{(ab)}[2]= \ce_{(ab)}\otimes [2]$ which gives an isomorphism
$\ce^a=\ce^a[0]\cong \ce_b[2]$.  In particular, via the conformal
metric, we shall identify $\cE_{(a_1 \ldots a_p)_0}[2p+2r]$ and
$\cE^{(a_1 \ldots a_p)_0}[2r]$.  Note also that with these conventions
the Laplacian $ \Delta$ is given by $\Delta=\bg^{ab}\nd_a\nd_b=
\nd^b\nd_b\,$ and so this carries a conformal weight of $-2$.
(That is, the conformal Laplacian lowers the conformal weight by 2.)

From the partial classification of conformally invariant operators
given in \cite{ES} (which uses heavily the algebraic results of
\cite{Boe-Coll}) one easily extracts the following result. 
\begin{proposition} \label{gck}
For each pair $(p,r)$, of non-negative integers,
there is a conformally invariant operator
\begin{eqnarray} \label{1BGG}
\begin{split}
  &\cE_{(a_1 \ldots a_p)_0}[2p+2r] \to
  \cE_{(b_0 \ldots b_{2r} a_1 \ldots a_p)_0}[2p+2r] \\
&\ph_{a_1 \ldots a_p} \mapsto 
  \na_{(b_0} \cdots \na_{b_{2r}} \ph_{a_1 \ldots a_p)_0} + lot
\end{split}
\end{eqnarray}
where ``$lot$'' denotes lower order terms. 
\end{proposition}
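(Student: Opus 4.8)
The plan is to realise \nn{1BGG} as the first (``splitting'' or ``differential'') operator in a Bernstein--Gelfand--Gelfand (BGG) sequence for the conformal group $G=SO(s+1,s'+1)$, and then invoke the classification of such operators. First I would identify the source bundle $\cE_{(a_1\ldots a_p)_0}[2p+2r]$ and the target $\cE_{(b_0\ldots b_{2r}a_1\ldots a_p)_0}[2p+2r]$ as the bundles associated to a specific pair of irreducible $\mathfrak{p}$-modules sitting in a single irreducible $\mathfrak{g}$-module, where $\mathfrak{p}$ is the parabolic stabilising a null ray; concretely these are the completely reducible quotients of adjacent terms in the Lie algebra homology $H_*(\mathfrak{g}_1,\mathbb{V})$ for the appropriate finite-dimensional $\mathfrak{g}$-representation $\mathbb{V}$, whose highest weight is read off from the Young-diagram shape $(p+2r+1,1)$-type together with the weight $2p+2r$. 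The existence statement of \cite{Boe-Coll} (Boe--Collingwood classify the homomorphisms between the relevant generalised Verma modules / the invariant differential operators in the conformally flat category) then immediately guarantees a conformally invariant operator between exactly these two bundles; the reference to \cite{ER} supplies the translation from the abstract Verma-module / $(\mathfrak{g},\mathfrak{p})$-module data into the explicit tensorial bundles named in the statement.

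The second step is to pin down the leading term. Conformal invariance forces the principal part to be a conformally weighted constant-coefficient operator built from $\nabla$ and $\bg$; since the target is the completely trace-free part over all of $b_0,\ldots,b_{2r},a_1,\ldots,a_p$, any occurrence of $\bg$ in the symbol would land in a trace and hence vanish on projection, so the symbol must be (a nonzero multiple of) $\na_{(b_0}\cdots\na_{b_{2r}}\,\cdot\,{}_{a_1\ldots a_p)_0}$. That it is nonzero rather than identically zero is part of what the BGG/Boe--Collingwood classification asserts: the operator in question is a genuine generator of the relevant $\mathrm{Hom}$-space of Verma module maps, not the zero map. Rescaling the normalisation of the operator produces exactly the displayed leading term, with ``$lot$'' absorbing all curvature-free lower-order corrections (on a general conformally flat background these lower-order terms are determined, through the tractor calculus of \cite{BEG,CapGotrans}, by the requirement of conformal invariance).

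The main obstacle is bookkeeping rather than conceptual: one must check that the pair of bundles appearing in \nn{1BGG} really does match an adjacent pair in a BGG complex — i.e.\ that the weights and tensor types are the ones that occur, with the source being the ``$0$-th'' slot and the target the ``$1$st'' slot — and in particular that no integrality or singular-infinitesimal-character obstruction kills the operator. This amounts to verifying that the affine Weyl group element relating the two weights is a single simple reflection (so the operator is first-order in the BGG grading, hence of finite differential order with the stated symbol) and that the relevant infinitesimal character is regular enough for \cite{Boe-Coll} to apply. Given that \cite{Boe-Coll} treats precisely the conformal case and \cite{ER} records the resulting operators bundle-by-bundle, this verification is routine, and I would present it as a direct citation with the weight computation displayed once.
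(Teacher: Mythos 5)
Your proposal matches the paper's own argument exactly: the paper states Proposition~\ref{gck} as following ``From \cite{Boe-Coll} (interpreted using the ideas of \cite{ER})'', and your account of realising \nn{1BGG} as the first BGG operator for the appropriate finite-dimensional $\mathfrak{g}$-module and invoking the Boe--Collingwood classification of Verma module homomorphisms via the Eastwood--Rice dictionary is precisely what that citation unpacks to. One small caveat: your trace-based argument for the symbol does not by itself exclude higher-order candidates of the form $|\xi|^{2s}\,\xi_{(b_0}\cdots\xi_{b_{2r}}\ph_{a_1\ldots a_p)_0}$, so the order $2r+1$ must first be fixed by the weight computation --- which your final paragraph correctly flags as the remaining bookkeeping step.
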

\noindent In fact there is a larger class of similar operators, but we
shall not need the even order analogues of the operators above for our
current discussion.  An algorithm for generating explicit formulae
for these operators is given in \cite{GoQJM} (in dimension four but
same formulae hold in all dimensions \cite{Gothesis}, see also
\cite{CSS3,CaldSou}). The lower order terms are given by Ricci
curvature and its derivatives; in particular on $\mathbb{E}^{s,s'}$ we
recover the operator of \nn{ck}. On any manifold we shall term $\ph$
in the kernel of \nn{1BGG} a {\em (generalised) conformal (Killing)
  tensor}.  (The terminology generalised conformal Killing tensor was
introduced in \cite{EaLeLap2} for solutions of \nn{1BGG} in the case
$p =3 $. We use the same terminology for solutions of \nn{1BGG} in
the general case.)

By construction the GJMS operator $P_k$ is conformally invariant
\cite{GJMS}.  This means that it is a natural operator on
pseudo-Riemannian manifolds $M$ that descends to a well defined
differential operator on conformal structures,
$$ P_k:\ce[k-\frac{n}2] \to \ce[-k-\frac{n}{2}].
$$  
Recall that we say $(M,g)$ is {\em locally
  conformally flat}, if locally there is a metric $\widehat{g}$,
conformally related to $g$, so that on this neighbourhood
$(M,\widehat{g})$ is isometric to $\E^{s,s'}$. If $(M,g)$ is locally
conformally flat then in all dimensions $n\geq 3$ the operators $P_k$
exist for every $k \geq 1$.

\begin{definition} \label{consym}
Let us fix a conformal manifold $(M,c)$. Suppose that $(S,S')$ is a
pair of differential operators 
$$ 
S: \ce[k-\frac{n}2] \to \ce[k-\frac{n}2], \quad\mbox{~and~}\quad S': \ce[-k-\frac{n}2]
\to \ce[-k-\frac{n}2]
$$ on the given conformal manifold $(M,c)$. If locally
(i.e. in contractable neighbourhoods) on $(M,c)$ we have agreement of
the compositions as follows
$$
P_k S =S' P_k,
$$ as operators on $\ce[k-\frac{n}{2}]$, then we shall say that
$(S,S')$ is a conformal symmetry (pair) of $P_k$ on $(M,c)$. 
\end{definition}
\noindent Note that the definition does not require/impose naturality
properties of the pair $(S,S')$. They are simply required to be well defined
differential operators on the given $(M,c)$.

For a given conformal manifold, and suitable natural number $k$, we
may ask for some description of all conformal symmetries of
$P_k$. From Theorem \ref{main} we have the following Theorem. Here
and below we use $\ce^{(p)_0}_r$ as shorthand for the bundle
$\cE^{(a_1 \ldots a_p)_0}[2r]$ (and its section space).  We will often
write $\ph^p_r$ to denote some section of this bundle.
\begin{theorem}\label{csymc} 
Let $(M,c)$ be a (locally) conformally flat 
 manifold of signature $(s,s')$. For each non-zero $\ph\in
 \ce^{(p)_0}_r$, $p\in \mathbb{Z}_{\geq 0}$, $r\in\{ 0,1,\cdots ,k-1\}$, a
 solution of \nn{1BGG}, there is canonically associated a non-trivial
 conformal symmetry $(S_\ph,S'_\ph)$ for $P_k$, with $S_\ph$ and
 $S'_\ph$ each having leading term
$$
\ph_r^{a_1 \ldots a_p} (\na_{a_1} \cdots \na_{a_p}) \De^r .
$$

Modulo trivial symmetries, locally any conformal symmetry of $P_k$ is a linear
combination of such pairs $(S_\ph,S'_\ph)$, for various solutions
$\ph$ of \nn{1BGG}, with $p$ and $r$ in the range assumed here.
\end{theorem}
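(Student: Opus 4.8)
The plan is to reduce Theorem~\ref{csymc} to the flat case Theorem~\ref{main} by means of the local conformal flatness hypothesis, and then to transport both the construction of the canonical symmetries and the classification statement along a local conformal isometry. First I would fix a contractible neighbourhood $U \subset M$ on which, by definition of local conformal flatness, there is a metric $\widehat g \in c$ so that $(U,\widehat g)$ is isometric to an open subset of $\E^{s,s'}$; in this scale the GJMS operator $P_k$ acts on the trivialised density bundle $\ce[k-\tfrac n2]$ and, by the standard conformal rescaling formula for the GJMS operators on conformally flat space, coincides (up to the positive power of $e^\Upsilon$ implementing the trivialisation) with $\Delta^k$ acting on ordinary functions. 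Likewise, the first BGG operator \nn{1BGG} is conformally invariant by Proposition~\ref{gck}, so in the scale $\widehat g$ its solutions $\ph \in \ce^{(p)_0}_r$ are exactly the solutions $\ph$ of \nn{ck} on $\E^{s,s'}$ — here one uses that, as explained in the text surrounding \nn{1BGG}, the density-weighted BGG operator reduces to the operator in \nn{ck} when the background is flat.

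With this dictionary in place, the existence half follows immediately: given $\ph \in \ce^{(p)_0}_r$ solving \nn{1BGG}, restrict to $U$, pass to the scale $\widehat g$, apply the canonical-symmetry construction \nn{cans} of Theorem~\ref{main} to the corresponding solution of \nn{ck}, obtaining a symmetry $(S_\ph,S'_\ph)$ of $\Delta^k$ with the stated leading term $\ph_r^{a_1\cdots a_p}(\na_{a_1}\cdots\na_{a_p})\De^r$; then interpret $(S_\ph,S'_\ph)$ back as operators on the conformal density bundles via the trivialisations. Conformal invariance of $P_k$ guarantees that the resulting pair is a conformal symmetry of $P_k$ in the sense of Definition~\ref{consym}, independently of the chosen scale, and the leading-term description is scale-independent once written in terms of the conformal metric and conformal weights. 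Non-triviality: a pair $(S_\ph,S'_\ph)$ built from a nonzero $\ph$ is non-trivial because its leading symbol is nonzero and of a form that cannot be produced by $TP_k$ for any $T$ (comparing orders/symbols as in the flat discussion, using that $\De^r$ appears with $r<k$); I would import the non-triviality argument verbatim from the flat setting, since triviality is defined purely in terms of composition with $P_k$, which is local.

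For the classification half, let $(S,S')$ be any conformal symmetry of $P_k$ on $(M,c)$. Working on a contractible $U$ as above and passing to the flat scale $\widehat g$, the pair $(S,S')$ becomes a symmetry of $\Delta^k$ on an open subset of $\E^{s,s'}$ in the ordinary sense. Here the one subtlety is that Theorem~\ref{main} is stated on all of $\E^{s,s'}$, whereas we only have a symmetry on an open set; I expect this to be the main obstacle, and I would handle it by the usual argument that symmetries of $\Delta^k$ have polynomial coefficients — the finite-order jet at a point determines the operator, and the solution spaces of the overdetermined systems \nn{ck} are finite-dimensional with polynomial solutions — so that a local symmetry extends uniquely to a global one and Theorem~\ref{main} applies. (Alternatively one can invoke the local version of the classification that the proof of Theorem~\ref{main}/Theorem~\ref{class} actually establishes.) Thus, modulo trivial symmetries, $(S,S') = \sum_i (S_{\ph_i}, S'_{\ph_i})$ for solutions $\ph_i$ of \nn{ck} with the stated range of $p$ and $r$; translating each $\ph_i$ back into a solution of \nn{1BGG} on $U$ via conformal invariance, and noting that trivial symmetries of $\Delta^k$ correspond under the dictionary to trivial symmetries of $P_k$, we obtain the claimed decomposition of $(S,S')$. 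Since the $\ph_i$ are determined by $(S,S')$ on $U$ and the construction is canonical, the local decompositions on overlapping neighbourhoods agree, giving the statement on $(M,c)$.
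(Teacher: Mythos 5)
Your proposal takes essentially the same approach as the paper: since $S,S',P_k$ are well-defined on $(M,c)$, pick a flat metric locally to reduce to Theorem~\ref{main}, and use that the canonical symmetries (Definition~\ref{cansym}, Theorem~\ref{cons}) are a priori defined on conformally flat manifolds so that the correspondence runs in both directions. The paper's proof is a short paragraph to exactly this effect, and your account fills in more of the dictionary between \nn{1BGG} and \nn{ck} explicitly, which is fine.

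One caveat on your handling of the local-versus-global subtlety. Your first proposed resolution — that a local symmetry of $\Delta^k$ extends uniquely to a global one because the solution spaces of \nn{ck} are polynomial and finite-dimensional — is circular as stated: you do not yet know that the coefficients of an arbitrary local symmetry solve \nn{ck}; that is precisely what the classification (Theorem~\ref{class}) establishes. The clean way out is the alternative you mention in parentheses: the proofs of Proposition~\ref{biggest} and Theorem~\ref{class} are term-by-term manipulations of differential operators and are manifestly local, so they already give a local classification and no extension argument is needed. That is what the paper implicitly relies on, and it is the version you should lead with; the polynomial-extension route should be dropped or demoted to a remark.
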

\noindent

 The question of conformal symmetries is not a priori the same
 question as that addressed in Theorem \ref{main}. However using that
 $S,S'$ and $P_k$ are well defined on $(M,c)$, we may use any metric
 $g\in c$ to calculate. This is a choice similar to choosing
 coordinates in order to calculate; indeed $g$ gives a trivialisation
 of the density bundles. Now, by working locally and choosing a flat
 metric, the result here follows immediately from Theorem \ref{main},
 since by the definition of the canonical symmetries in Definition
 \ref{cansym} and Theorem \ref{cons}, they are well defined on locally
 conformally flat conformal manifolds.

\subsection{Algebraic structure} \label{alg2}
Let us denote by $\cA_k$ the algebra of symmetries of $\De^k$ on
$\E^{s,s'}$ modulo trivial symmetries. As usual we write $n=s+s'$.
It follows from the theorem
\ref{main} we have the vector space isomorphism
\begin{equation} \label{Avec}
\cA_k \cong \bigoplus_{j=0}^\infty \bigoplus_{i=0}^{k-1} \cK_i^j
\end{equation}
where $\cK_i^j \subseteq \cE_i^{(j)_0}$ is the space of solutions
of \nn{1BGG} with $r=j$ and $p=i$.

Now we turn to the algebra structure of $\cA_k$. It is well known 
\cite{Lepowsky,CSSann}, 
and given explicitly by \nn{split} below, that
 the (finite dimensional) spaces $\cK_i^j$ are isomorphic to 
irreducible $\frak{g} := \frak{so}_{s+1,s'+1}$--modules
\begin{equation} \label{K}
\cK_i^j \cong 
\raisebox{-13pt}{$
\overbrace{\begin{picture}(60,30)
\put(0,5){\line(1,0){60}}
\put(0,15){\line(1,0){60}}
\put(0,25){\line(1,0){60}}
\put(0,5){\line(0,1){20}}
\put(10,5){\line(0,1){20}}
\put(20,5){\line(0,1){20}}
\put(50,5){\line(0,1){20}}
\put(35,10){\makebox(0,0){$\cdots$}}
\put(35,20){\makebox(0,0){$\cdots$}}
\end{picture}}^{j}
\!
\overbrace{\begin{picture}(60,30)
\put(0,15){\line(1,0){60}}
\put(0,25){\line(1,0){60}}
\put(0,5){\line(0,1){20}}
\put(10,15){\line(0,1){10}}
\put(20,15){\line(0,1){10}}
\put(50,15){\line(0,1){10}}
\put(60,15){\line(0,1){10}}
\put(35,20){\makebox(0,0){$\cdots$}}
\put(62,17){\makebox(0,0)[l]{${}_0$}}
\end{picture}}^{2i}$
}
\end{equation}
in the notation of Young diagrams. (Using the highest weights,
expressed as a vector of coefficients over the Dynkin diagram as in
\cite{BastEast}, $\cK_i^j$ corresponds to the coefficient $2i$ over
the first node, the coefficient $j$ over the second one and with
remaining coefficients equal to zero. At least this applies in
dimensions at least 5, but there is an obvious adjustment in lower
dimensions.)

We follow \cite{EaLeLap2} in the discussion of the algebraic 
structure 
of $\cA_k$. Decomposing the tensor product of two copies of 
$\frak{g} = \myng{1,1}$ we obtain 
\begin{equation} \label{gg}
\frak{g} \otimes \frak{g}  = 
\underbrace{\myng{2,2}_{\,0} \oplus \myng{2}_{\,0} \oplus \R \oplus 
\myng{1,1,1,1}}_{\textstyle \frak{g} \odot \frak{g}}
\oplus
\underbrace{\myng{2,1,1}_{\hlower{1.8ex}{0}}
\oplus \myng{1,1}}_{\textstyle \frak{g} \wedge \frak{g}}
\end{equation}
where $\odot$ is the symmetric tensor product. All these components
occur with multiplicity one.  We shall need notation for the
projections of $V_1 \otimes V_2 \in \frak{g} \otimes \frak{g}$ to some
of the irreducible components on the right hand side of the previous
display.  In particular, we put
\begin{equation} \label{ggcomp}
V_1 \boxtimes V_2 \in \myng{2,2}_{\,0}, \quad
V_1 \bu V_2 \in \myng{2}_{\,0}, \quad
\langle V_1, V_2 \rangle \in \R \quad \text{and} \quad
[V_1, V_2] \in \myng{1,1},
\end{equation}
and we write the same notation for the projections. 
Here the $\boxtimes$ denotes the Cartan product, $\langle,\rangle$
the Killing form on $\frak{g}$ (normalized as in 
\cite{EaLeLap2}) and $[,]$ is the Lie bracket. These 
projections are described explicitly in \nn{IIirr} below.
There is also the inclusion
$$
\boxtimes^{2k} \tiny\yng(1) = 
\raisebox{0.7ex}{$
\underbrace{
\supertiny \squaretab
\begin{tabular}{|S|SSSS|S|} 
\hline
&&\multicolumn{3}{|c|}{.\;.\;.}&  \\
\hline
\end{tabular}}_{2k}
$}
{}_{\,0}
\hookrightarrow
\underbrace{
\myng{1,1} \odot \myng{1,1} \odot \cdots \odot \myng{1,1}}_{2k} 
\subset
\underbrace{
\myng{1,1} \otimes \myng{1,1} \otimes \cdots \otimes \myng{1,1}}_{2k},
$$
see \nn{incl} for the explicit form.
That is, there is an (obviously unique) irreducible component
in $\bigodot^{2k} \frak{g}$ of the type specified on the left hand side.

With this notation, we obtain the following generalization of
\cite[Theorem 3]{EaLeLap2}:

\begin{theorem} \label{algmain}
The algebra $\cA_k$  is isomorphic to the tensor algebra $\bigotimes \frak{g}$
modulo the two sided ideal generated by
\begin{equation} \label{gen}
V_1 \otimes V_2 - V_1 \boxtimes V_2 - V_1 \bu V_2 - \frac{1}{2} [V_1,V_2]
+\frac{(n-2k)(n+2k)}{4n(n+1)(n+2)} \langle V_1,V_2 \rangle,
\quad V_1,V_2 \in \frak{g}
\end{equation}
and the image of $\boxtimes^{2k} \tiny\yng(1)$ in $\otimes^{2k} \frak{g}$.
\end{theorem}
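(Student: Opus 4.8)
The plan is to identify $\cA_k$ with a concrete quotient of $\bigotimes\frak{g}$ in two stages, exactly paralleling the two-step description announced in the introduction (Theorems \ref{dec2can} and \ref{extra}). By Theorem \ref{cons}, the universal symmetry operators form an algebra which is a quotient of $\bigotimes\frak{g}$; composing the canonical symmetry map $\ph\mapsto S_\ph$ with the inclusion $\cK_i^j\hookrightarrow\frak{g}^{\otimes m}$ (for appropriate $m$, reading off $i$ and $j$ from the Young-diagram description \nn{K}) realises each solution space as an irreducible $\frak{g}$-submodule of a tensor power of $\frak{g}$, and the product of symmetry operators corresponds to concatenation of tensors. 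So $\cA_k$ is a quotient of $\bigotimes\frak{g}$; the content of the theorem is the precise identification of the defining ideal. First I would establish that the ideal generated by the element \nn{gen} alone produces the quotient algebra of \emph{universal} symmetry operators acting on the relevant density bundle $\ce[k-\tfrac n2]$ — that is, it encodes precisely the relation coming from reducing $\frak{g}\otimes\frak{g}$ into the irreducible pieces \nn{gg}, together with the one numerical coefficient that remembers the weight. This is where the hypotheses of Theorem \ref{dec2can} enter: the constant $\tfrac{(n-2k)(n+2k)}{4n(n+1)(n+2)}$ is exactly the eigenvalue by which the ``trace part'' $\langle V_1,V_2\rangle$ of a product of two tractor-valued first-order operators is forced to act on the weight-$(k-\tfrac n2)$ tractor bundle, and I would pin this constant down by a direct tractor computation — evaluating the composition $S_{V_1}S_{V_2}$ and projecting onto the scalar component. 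The remaining components $\boxtimes$, $\bu$, $\tfrac12[\,,\,]$ are then just the names of the other projectors in \nn{ggcomp}, so the relation \nn{gen} says ``product $=$ sum of its isotypic projections, with the scalar part rescaled by the weight-dependent constant.''

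Second I would account for the extra ideal generator, the image of $\boxtimes^{2k}\tiny\yng(1)$ in $\otimes^{2k}\frak{g}$. This is the ``trivial symmetries'' contribution of Theorem \ref{extra}: a symmetry $S$ of $\Delta^k$ with $S'$ is trivial iff $PS$ is (by the remark in \S2.1 for the $\cV=\cW$ case), and the leading term $\ph_r^{a_1\cdots a_p}(\na_{a_1}\cdots\na_{a_p})\Delta^r$ with $r\ge k$ can be absorbed into $\Delta^k\circ(\text{something})$; the first such obstruction occurs precisely at the Cartan power $\boxtimes^{2k}\tiny\yng(1)$ sitting inside $\bigodot^{2k}\frak{g}$, because this is the unique irreducible component of $\frak{g}^{\otimes 2k}$ whose image under the canonical map has leading symbol proportional to $\Delta^k$ times a generalised conformal Killing tensor of the appropriate type. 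So I would verify: (i) the image of $\boxtimes^{2k}\tiny\yng(1)$ in the algebra-so-far (the quotient of $\bigotimes\frak{g}$ by the ideal from \nn{gen}) consists of trivial symmetries, hence must be quotiented; and (ii) conversely, modulo \nn{gen} and this extra generator, every class is represented by a tensor lying in $\bigoplus_{j,\,i\le k-1}\cK_i^j$, matching the vector-space isomorphism \nn{Avec}. Step (ii) is really a counting/spanning argument: the relation \nn{gen} lets one rewrite any word in $\bigotimes\frak{g}$ as a sum of symmetrised words living in $\bigodot^m\frak{g}$ and then, via branching, in the Cartan-type components; the two ideal generators together cut these down to exactly the list in \nn{K}.

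The main obstacle I expect is step (ii): showing that the two families of relations are \emph{sufficient}, i.e. that no further identities among the canonical symmetries survive. This is a PBW-type normal-form argument — one wants to show the associated graded of the quotient algebra (by symbol order) is spanned by, and no smaller than, $\bigoplus_{j}\bigoplus_{i=0}^{k-1}\cK_i^j$. The inequality ``no smaller'' follows from Theorem \ref{main}/\ref{class} (all symmetries come from solutions of \nn{ck}, and distinct solutions give independent symmetries modulo trivial ones), which I may assume. The inequality ``spanned by'' requires checking that iterated application of \nn{gen} indeed pushes every monomial into the span of Cartan products, and then that the $\boxtimes^{2k}\tiny\yng(1)$ relation kills precisely the would-be components with $r\ge k$ — here one must be careful that quotienting by a \emph{two-sided} ideal generated by $\boxtimes^{2k}\tiny\yng(1)$ does not accidentally kill more than the trivial symmetries, which comes down to the fact (from the branching rules used in \nn{gg} and the Littlewood–Richardson analysis) that every irreducible component of $\frak{g}^{\otimes m}$ that contains $\boxtimes^{2k}\tiny\yng(1)\otimes(\text{anything})$ in its decomposition corresponds under the canonical map to a symmetry divisible by $\Delta^k$. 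Making that last correspondence precise — matching the abstract $\frak{g}$-module combinatorics with the analytic fact that $\Delta^k$-divisibility means triviality — is the technical heart of the proof, and is where I would spend the most care, invoking the explicit formulae \nn{IIirr} and \nn{incl} for the projectors and the inclusion.
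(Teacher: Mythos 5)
Your outline matches the paper's architecture: the two ideal generators are supplied by Theorem \ref{dec2can} (evaluated at $w=k-\tfrac n2$, which produces exactly the constant $\tfrac{(n-2k)(n+2k)}{4n(n+1)(n+2)}$ in \nn{gen}) and Lemma \ref{extra} (which shows that a parallel $I\in\boxtimes^{2k}\tiny\yng(1)$ yields $S_\ph=\ph\,P_k$, hence a trivial symmetry), and the surjectivity of $\bigotimes\frak{g}\to\cA_k$ comes from Theorem \ref{csymc}. All of that is faithfully reproduced.

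The genuine gap is in your step (ii), the sufficiency. You correctly flag this as the technical heart, but what you offer --- ``iterated application of \nn{gen} pushes monomials into Cartan products, then branching/Littlewood--Richardson kills the $r\ge k$ components, with care that the two-sided ideal generated by $\boxtimes^{2k}\tiny\yng(1)$ does not over-kill'' --- is a gesture, not an argument, and it does not obviously converge to a proof. The paper's mechanism is different and concrete. After passing to the symbol (graded) algebra, the $\frak{g}\wedge\frak{g}$ part of \nn{gen} lets one work in $\bigodot\frak{g}$; one then defines $\cA_{k,t}\subseteq\bigodot^t\frak{g}$ by the condition that $k$ iterated traces vanish, so that $\cA_{k,t}=\bigoplus_{j+2i=t}\cK_i^j$. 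The crux is an unnamed Lemma (whose proof for $t<2k$ rests on Eastwood's Cartan product theorem \cite{EaCa}) stating that
$(\frak{g}\otimes\cA_{k,t-1})\cap(\cA_{k,t-1}\otimes\frak{g})$ equals $\cA_{k,t}$ when $t\ne 2k$ and $\cA_{k,t}\oplus\boxtimes^{2k}\tiny\yng(1)$ when $t=2k$; combined with the projections $P_t,Q_t:\bigodot^t\frak{g}\to\frak{g}\otimes\cA_{k,t-1},\ \cA_{k,t-1}\otimes\frak{g}$ whose kernels lie in the ideal, a dimension count gives $\bigodot^t\frak{g}=(\im P_t\cap\im Q_t)\oplus(\ker P_t+\ker Q_t)$ and the induction closes. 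Your proposal does not contain this intersection lemma nor the projection/dimension argument, and without something equivalent the ``no further relations'' direction remains unproved; in particular the worry you raise (that the two-sided ideal generated by $\boxtimes^{2k}\tiny\yng(1)$ might kill too much) is settled in the paper precisely by the $t>2k$ case of that Lemma, not by a Littlewood--Richardson divisibility argument.
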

\noindent Note that, from Theorem \ref{csymc}, $\cA_k$ is also the algebra of
local symmetries of $P_k$ on any conformally flat conformal manifold of
dimension $n$.  

\section{Conformal tractor calculus} \label{back}

We first recall the basic elements of tractor calculus following
\cite{CapGoamb,GoPetLap}.

\subsection{Tractor bundles}\label{tractorsect}

 Let $M$ be a smoo\-th manifold of
dimension $n\geq 3$ equipped with a conformal structure $(M,c)$ of
signature $(s,s')$.
Since the Levi-Civita connection is torsion-free, the (Riemannian)
curvature 
$R_{ab}{}^c{}_d$ is given by $ [\nd_a,\nd_b]v^c=R_{ab}{}^c{}_dv^d $ where 
$[\cdot,\cdot]$ indicates the commutator bracket.  The Riemannian
curvature can be decomposed into the totally trace-free Weyl curvature
$C_{abcd}$ and a remaining part described by the symmetric {\em
Schouten tensor} $\Rho_{ab}$, according to $
R_{abcd}=C_{abcd}+2\bg_{c[a}\Rho_{b]d}+2\bg_{d[b}\Rho_{a]c}, $ where
$[\cdots]$ indicates antisymmetrisation over the enclosed indices.
We shall write $J := P^a{}_a$. The {\em
Cotton tensor} is defined by
$$
A_{abc}:=2\nabla_{[b}\Rho_{c]a} .
$$

The standard tractor bundle over $(M,[g])$
 is a vector bundle of rank $n+2$ defined, for each $g\in c$,
by  $[\ce^A]_g=\ce[1]\oplus\ce_a[1]\oplus\ce[-1]$. 
If $\wh g=e^{2\Up}g$ ($\Up\in C^\infty(M)$), we identify  
 $(\alpha,\mu_a,\tau)\in[\ce^A]_g$ with
$(\wh\alpha,\wh\mu_a,\wh\tau)\in[\ce^A]_{\wh g}$
by the transformation
\begin{equation}\label{transf-tractor}
 \begin{pmatrix}
 \wh\alpha\\ \wh\mu_a\\ \wh\tau
 \end{pmatrix}=
 \begin{pmatrix}
 1 & 0& 0\\
 \Up_a&\delta_a{}^b&0\\
- \tfrac{1}{2}\Up_c\Up^c &-\Up^b& 1
 \end{pmatrix} 
 \begin{pmatrix}
 \alpha\\ \mu_b\\ \tau
 \end{pmatrix} ,
\end{equation}
where $\Up_a:=\nd_a \Up$.  These
identifications are consistent upon changing to a third metric from
the conformal class, and so taking the quotient by this equivalence
relation defines the {\em standard tractor bundle} $\ct$, or $\ce^A$
in an abstract index notation, over the conformal manifold.
The bundle $\ce^A$ admits an
invariant metric $ h_{AB}$ of signature $(s+1,s'+1)$ and an invariant
connection, which we shall also denote by $\nabla_a$, preserving
$h_{AB}$.  In a conformal scale $g$, these are given by
\begin{equation}\label{basictrf}
 h_{AB}=\begin{pmatrix}
 0 & 0& 1\\
 0&\bg_{ab}&0\\
1 & 0 & 0
 \end{pmatrix}
\text{ and }
\nabla_a\begin{pmatrix}
 \alpha\\ \mu_b\\ \tau
 \end{pmatrix}
  =
\begin{pmatrix}
 \nabla_a \alpha-\mu_a \\
 \nabla_a \mu_b+ \bg_{ab} \tau +\Rho_{ab}\alpha \\
 \nabla_a \tau - \Rho_{ab}\mu^b  \end{pmatrix}. 
\end{equation}
It is readily verified that both of these are conformally well-defined,
i.e., independent of the choice of a metric $g\in [g]$.  Note that
$h_{AB}$ defines a section of $\ce_{AB}=\ce_A\otimes\ce_B$, where
$\ce_A$ is the dual bundle of $\ce^A$. Hence we may use $h_{AB}$ and
its inverse $h^{AB}$ to raise or lower indices of $\ce_A$, $\ce^A$ and
their tensor products.

In computations, it is often useful to introduce 
the `projectors' from $\ce^A$ to
the components $\ce[1]$, $\ce_a[1]$ and $\ce[-1]$ which are determined
by a choice of scale.
They are respectively denoted by $X_A\in\ce_A[1]$, 
$Z_{Aa}\in\ce_{Aa}[1]$ and $Y_A\in\ce_A[-1]$, where
 $\ce_{Aa}[w]=\ce_A\otimes\ce_a\otimes\ce[w]$, etc.
 Using the metrics $h_{AB}$ and $\bg_{ab}$ to raise indices,
we define $X^A, Z^{Aa}, Y^A$. Then we
immediately see that 
\begin{equation}\label{trmet}
Y_AX^A=1,\ \ Z_{Ab}Z^A{}_c=\bg_{bc} ,
\end{equation}
and that all other quadratic combinations that contract the tractor
index vanish. 
In \eqref{transf-tractor} note that  
$\wh{\alpha}=\alpha$ and hence $X^A$ is conformally invariant. 
Using this notation the tractor $V^A$ given by
$$[V^A]_g=\begin{pmatrix}
 \alpha\\ \mu_a\\ \tau
 \end{pmatrix}
$$
may be written
\begin{equation}\label{stsplit}
V^A=\alpha Y^A +\mu^aZ^A{}_a+\tau X^A.
\end{equation}

The curvature $ \Omega$ of the tractor connection 
is defined by 
$$
[\nd_a,\nd_b] V^C= \Omega_{ab}{}^C{}_EV^E 
$$
for $ V^C\in\ce^C$.  Using
\eqref{basictrf} and the formulae for the Riemannian curvature yields
\begin{equation}\label{tractcurv}
\Omega_{abCE}= Z_C{}^cZ_E{}^e C_{abce}-2X_{[C}Z_{E]}{}^e A_{eab}~.
\end{equation}

In the following we shall also need 2-form tractors, that is
$\Lambda^2\mathcal{T}$, or in abstract indices $\ce_{[AB]}$. To
simplify notation we shall set the rule that indices labelled
sequentially by a superscript are implicitly skewed over and then denote 
skew pairs with a bold multi-index. Here we
shall need this only for valence 2 forms. This convention does not
apply to subscripts. That is, $A^0A^1$ means $[A^0A^1] = \form{A}$ but
e.g.\ the notation $A_1A_2A_3$ does not assume any implicit projection
to a tensor part. The same convention will be used for tensor indices, i.e.\
$[a^0a^1]$ means $a^0a^1 = \form{a}$.

With $\ce^k[w]$ denoting the space of $k$-forms of weight $w$, the
structure of $\mathcal{E}_{\vec{A}}=\ce_{A^0A^1}$ is
\cite{BrGodeRham,GoSiCkf}
\begin{equation} \label{comp_series_form}
  \mathcal{E}_{\vec{A}} =
  \mathcal{E}^{1}[2] \lpl \left( \mathcal{E}^2[2] \oplus
  \mathcal{E}[0] \right) \lpl \mathcal{E}^{1}[0];
\end{equation}
this means that in a choice of scale the {\em semidirect sums} $\lpl$ may be
replaced by direct sums and otherwise they indicate the composition
series structure arising from the tensor powers of
\nn{transf-tractor}.

In 
a choice of metric $g$ from the conformal class, the
projectors (or splitting operators) $X,Y,Z$ for $\mathcal{E}_A$
determine corresponding projectors $\X,\Y,\Z,\W$ for
$\mathcal{E}_{\vec{A}}$, 
These execute the  splitting of this space into four components and are given 
as follows.
\begin{center}
\renewcommand{\arraystretch}{1.3}
\begin{tabular}{c@{\;=\;}l@{\;=\;}
l@{\ $\in$\ }l}
$\Y$ & $\Y_{A^0A^1}^{\quad a^1}$ &
 $Y_{A^0}^{}Z_{A^1}^{a^1} $ &
  $\mathcal{E}_{\vec{A}}^{a^1}[-2]$ \\
$\Z$ & $\Z_{A^1  A^2}^{\, a^1 a^2}$ &
 $Z_{A^1}^{\,a^1}Z_{A^2}^{\,a^2}$ &
  $\mathcal{E}_{\vec{A}}^{\vec{a}}[-2]$ \\
$\W$ & $\W_{A^0A^1}$ &
  $X_{A^0}^{}Y_{A^1}^{}$ &
  $\mathcal{E}_{\vec{A}}[0]$ \\
$\X$ & $\X_{A^0A^1}^{\quad a^1 }$ &
$X_{A^0}^{}Z_{A^1}^{\,a^1} $ &  
  $\mathcal{E}_{\vec{A}}^{a^1}[0]$.
\end{tabular}
\end{center}
Further they satisfy 
$\X^\form{A}_{\ a} \Y_\form{A}^{\ c} = \frac{1}{2} \de_a^c$,
$\Z^\form{A}_{\, \form{a}} \Z_\form{A}^{\, \form{c}} = \de_{a^1}^{c^1}
\de_{a^2}^{c^2}$ and 
$\W^\form{A} \W_\form{A} = -\frac{1}{2} \id$, the remaining 
contractions are zero.
The explicit formula for the tractor connection is then determined by how it acts on these  (cf.\ \cite{GoSiCkf,BrGodeRham}):
\begin{equation} \label{exform}
\begin{split}
  \na_p \Y_{A^0A^1}^{\quad a^1} &=
      P_{pa_0} \Z_{A^0A^1}^{\,a^0 a^1}
    +  P_p^{\ a^1} \W_{A^0A^1} \\
  \na_p \Z_{A^0A^1}^{\,a^0a^1} &=
    - 2 \de_p^{a^0} \Y_{A^0A^1}^{\quad a^1}
    - 2 P_p^{\ a^0}  \X_{A^0A^1}^{\quad a^1} \\
  \na_p \W_{A^0A^1} &=
    - \bg_{pa^1} \Y_{A^0A^1}^{\quad a^1}
    + P_{pa^1} \X_{A^0A^1}^{\,\ \ a^1} \\
  \na_p \X_{A^0A^1} &=
      \bg_{pa^0} \Z_{A^0A^1}^{\,a^0a^1}
    -  \de_p^{a^1} \W_{A^0A^1},
\end{split}
\end{equation}

\subsection{Key differential operators}

Given a choice of conformal scale, Thomas' {\em tractor-$D$ operator} \cite{BEG}
$D_A\colon\ce_{B \cdots E}[w] \to \cE_{AB\cdots E}[w-1]$
is defined by 
\begin{equation}\label{tractorD}
D_A V:=(n+2w-2)w Y_A V+ (n+2w-2)Z_{Aa}\nabla^a V -X_A(\Delta V+w \J) V. 
\end{equation}   
This is conformally invariant, as can be checked directly using the
formulae above (or alternatively there are conformally invariant
constructions of $D$, see e.g.\ \cite{Gosrni}).  Acting on sections of
weight $w\neq 1-n/2$  \nn{tractorD} is a {\em differential splitting
  operator} since there is a bundle homomorphism which inverts $D$. In
this case it is a multiple of $X^A:\cE_{AB\cdots E}[w-1]\to \ce_{B
  \cdots E}[w] $; $X^A D_A$ is a multiple of the identity on the
domain space. This splitting operator is particularly important on
$\ce[1]$, the densities of weight 1: for non-vanishing $\si\in
\ce[1]$, $g:= \si^{-2}\bg$ is Einstein if and only if $D_A\si$ is
parallel for the tractor connection. The point is that the tractor
connection \nn{basictrf} gives a prolonged system essentially equivalent
to the equation $\nabla_{(a}\nabla_{b)_0}\si +P_{(ab)_0}\si=0$ which
controls whether the metric $g\in c$ is Einstein \cite{BEG}.

The GJMS operators on conformally flat manifolds can easily be constructed 
using the tractor $D$-operator. It turns out
$$
(-1)^k X_{A_1} \ldots X_{A_k} P_k = D_{A_1} \ldots D_{A_k}
\quad \text{on} \quad \cE_\bullet[-n/2+k],
$$
see \cite{Gosrni} for details. Here $\bullet$, in $\cE_\bullet$, denotes 
any system of tractor indices (or ${\frak so}(h)$ tensor part thereof).

In addition to the tractor-$D$ operator $D_A$, one has also 
the conformally invariant \idx{double-D} operator $\bbD_\form{A}$
and its ``square'' $\bbD^2_{AB} = -\bbD_{(A}{}^P \bbD_{|P|B)}$ defined as
\begin{eqnarray} \label{doubleD}
\begin{split}
&\bbD_\form{A} = 2( w\W_\form{A} + \X_{\form{A}}^{\,a} \na_a):
\cE_\bullet[w] \longrightarrow \cE_\form{A} \otimes \cE_\bullet[w], 
\quad w \in \bbR, \\
&\bbD^2_{AB} = -( wh_{AB} + X_{(A} D_{B)} ):
\cE_\bullet[w] \longrightarrow \cE_{(AB)} \otimes \cE_\bullet[w], 
\quad w \in \bbR.
\end{split}
\end{eqnarray}
The operator $\bbD_\form{A}$ (but with the opposite sign) was
originally defined in \cite{Goadv}.  Note that, $2 X_{[A^0}D_{A^1]} =
(n+2w-2) \bbD_\form{A}$ on $\cE_\bullet[w]$.  We shall also need the
commutation relation on $\cE_\bullet[w]$
\begin{equation} \label{[D,X]} 
  [D_A,X_B] = -2\bbD_{AB} + (n+2w)h_{AB} 
\end{equation}
from \cite{Gosrni}; alternatively this may be viewed as defining $\bbD$
as (one half of) the skew part of the left hand side. 

Finally some points of notation: In the following we shall sometimes
write $\na^q$ to denote the composition of $q$ applications of
$\na$. By context it will be clear that $q$ is not to be interpreted
as an abstract index.  Next if $\cV$ is a tensor bundle, or a tensor
product of the standard tractor bundle then for $F \in \cV$ we shall
write $F|_\boxtimes$ to denote the projection of the section $F$ to
the Cartan component (with respect to the ${\frak co}(g)$ structure,
or ${\frak so}(h)$ tensor structure, respectively) of the bundle
$\cV$.    
For example on
$\mathbb{E}^{s,s'}$ equipped with the standard flat diagonal signature
$(s,s')$ metric the equation \nn{1BGG} may be expressed as
$[\na^{2r+1} \ph ]|_\boxtimes =0$.

\section{The double-D and conformally invariant operators}
\label{Ds}

We work on $(M,[g])$, assumed to be locally conformally flat.  We
outline a rather general picture here. The theorem below provides a
general technique for the construction of symmetries of \idx{any}
conformally invariant operator that acts between irreducible
natural bundles. Moreover, since the tools used are general in nature, this
result indicates how to deal with symmetries of invariant operators on
a bigger class of structures, the so-called parabolic geometries
\cite{CSbook}. This will be taken up elsewhere.

\vspace{1ex}

Consider a conformally invariant differential operator $P: \cV \to
\cW$ between irreducible (or completely reducible will suffice)
natural bundles $\cV$ and $\cW$. More specifically, we restrict only
to subbundles of $(\bigotimes \cE_a) \otimes (\bigotimes \cE^b)
\otimes \cE[w]$ which we shall term tensor bundles. The case of spinor
bundles is however completely analogous.

Assume for a moment the general (i.e.\ possibly curved) conformal setting.
Following \cite{CapGotrans}, the double-D operator $\bbD_\form{A}$ can 
be extended to all irreducible bundles (see the discussion on the 
fundamental derivative below for details). 
This extension obeys the Leibniz rule, and since \nn{doubleD} describes
$\bbD_\form{A}$ on $\cE_\bullet[w]$, it remains to understand the action of 
$\bbD_\form{A}$ on $\cE_a \cong \cE^b[-2]$. In this case we obtain
\begin{equation} \label{fundDforms}
\bbD_\form{B} f_a = -2 \W_\form{B} f_a 
+ 2\Z_\form{B}^{\,\form{b}} \bg_{b^0a} f_{b^1} + 2\X_\form{B}^{\ b} \na_b f_a
\quad \text{for} \quad f_a \in \cE_a
\end{equation}
where $\form{B}$ is a multi-index, following the convention
introduced in the previous section.

Our use of $\bbD$ is linked to the following proposition. For
a tangent vector $\ph^a \in \cE^a$ we denote by $L_\ph$ the Lie derivative
on sections of natural bundles. Recall $\cE[w]$ is such a natural 
bundle, cf.\ the definition of $\cE[w]$ in Section \ref{maint}, as well as 
$\cE_a$ and $\cE^b$.

\begin{proposition} \label{lie}
Let $M$ be any conformally flat manifold and assume $\ph^a \in \cE^a$
is a conformal Killing vector (i.e.\ a solution of \nn{1BGG}). Then
there is a unique parallel tractor $I_\ph^\form{A} \in \cE^\form{A}$ such that $\ph^a = 2\X_\form{A}^{\: a}
I_\ph^\form{A}$ \cite{GoSiCkf}, cf.\ \nn{Iph}. Then
$$
I_\ph^\form{A} \bbD_\form{A} = L_\ph 
\quad \text{on} \quad 
(\bigotimes \cE_b) \otimes (\bigotimes \cE^c) \otimes \cE[w]. 
$$
\end{proposition}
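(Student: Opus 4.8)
The statement asserts that the parallel tractor $I_\ph^\form{A}$ associated to a conformal Killing vector $\ph^a$, when paired with the double-$D$ operator $\bbD_\form{A}$, reproduces the Lie derivative $L_\ph$ on all tensor (and tensor-weighted) bundles. The plan is to reduce the identity to a small number of generating cases and then exploit that both sides satisfy the same Leibniz rule. First I would recall the explicit form of $I_\ph^\form{A}$: writing $\ph^a$ in a scale, $I_\ph^\form{A}$ has $\X$-slot $\tfrac12\ph^a$, $\W$-slot a multiple of $\operatorname{div}\ph$ (namely $-\tfrac1n\na_b\ph^b$), $\Z$-slot $\na^{[a}\ph^{b]}$ (the infinitesimal rotation part), and $\Y$-slot built from $\na\na\ph$ and the Schouten tensor — this is exactly the prolongation of the conformal Killing equation \nn{1BGG} that makes $I_\ph^\form{A}$ parallel. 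I would quote this from \cite{GoSiCkf} (the reference already cited in the statement) rather than rederive it.

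Next I would check the identity on the two "generating" bundles: densities $\cE[w]$ and the cotangent bundle $\cE_a$ (equivalently $\cE^b[-2]$). On $\cE[w]$, using \nn{doubleD} one has $I_\ph^\form{A}\bbD_\form{A} = 2 I_\ph^\form{A}(w\W_\form{A} + \X_\form{A}{}^a\na_a)$; contracting the $\W$-slot and $\X$-slot of $I_\ph$ against these (using $\W^\form{A}\W_\form{A}=-\tfrac12\id$, $\X^\form{A}_{\ a}\Y_\form{A}^{\ c}=\tfrac12\de_a^c$, and the vanishing of the remaining contractions from the displayed contraction rules) produces $\ph^a\na_a + \tfrac{w}{n}(\na_b\ph^b)$, which is precisely $L_\ph$ acting on a weight-$w$ density. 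On $\cE_a$ I would use \nn{fundDforms}: contracting $I_\ph^\form{B}$ against $-2\W_\form{B}f_a + 2\Z_\form{B}^{\,\form{b}}\bg_{b^0a}f_{b^1} + 2\X_\form{B}^{\ b}\na_b f_a$ picks up the transport term $\ph^b\na_b f_a$ from the $\X$-slot, the $(\na_{(a}\ph^{b)}$-type) and rotation terms from the $\Z$-slot, and the trace/divergence term from the $\W$-slot; assembling these gives $\ph^b\na_b f_a + (\na_a\ph^b)f_b$, i.e. $L_\ph f_a$. (The weight-shift bookkeeping is consistent because $\cE_a\cong\cE^b[-2]$ and $L_\ph$ is insensitive to this identification.)

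Finally I would invoke that $\bbD_\form{A}$, in its extension to all irreducible bundles via the fundamental derivative of \cite{CapGotrans} (the extension referenced just above the proposition), satisfies the Leibniz rule over tensor products, and that $L_\ph$ is a derivation of the tensor algebra as well. Since every tensor bundle $(\bigotimes\cE_b)\otimes(\bigotimes\cE^c)\otimes\cE[w]$ is built by tensor products and contractions from the generators $\cE[w]$ and $\cE_a$, and since $I_\ph^\form{A}$ is a fixed (parallel) tractor that the contraction $I_\ph^\form{A}\bbD_\form{A}$ commutes past in the Leibniz expansion, the identity on generators propagates to all tensor bundles; one also checks it is compatible with taking trace-free/irreducible parts, which both operators preserve.

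\textbf{Main obstacle.} The genuinely computational core is the $\cE_a$ case: one must carefully match the $\Y$, $\Z$, $\W$ slots of the parallel tractor $I_\ph^\form{A}$ — which involve $\na\na\ph$ and Schouten-tensor terms — against \nn{fundDforms} and see that the "unwanted" second-derivative and curvature contributions cancel, leaving exactly the first-order operator $L_\ph$. This cancellation is forced by the conformal Killing equation (i.e. by $I_\ph$ being parallel), but verifying it cleanly — ideally by choosing a scale in which $\ph$ is a genuine Killing field, or at a point, to kill lower-order clutter — is where the real work lies. The Leibniz-rule propagation step is then essentially formal, provided one is careful that the fundamental-derivative extension of $\bbD_\form{A}$ is the one for which \nn{fundDforms} holds and for which the Leibniz rule is available from \cite{CapGotrans}.
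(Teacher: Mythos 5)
Your proposal follows essentially the same route as the paper: verify the identity on the two generating bundles $\cE[w]$ and $\cE_a\cong\cE^b[-2]$ by contracting the explicit slots of $I_\ph$ against \nn{doubleD} and \nn{fundDforms}, observe that the residual term is killed by the conformal Killing equation (equivalently by $I_\ph$ being parallel), and then propagate to all weighted tensor bundles via the Leibniz rule shared by $\bbD$ and $L_\ph$. One small slip worth fixing: the $\W$-slot of $I_\ph$ in \nn{Iph} is $+\tfrac1n\na_a\ph^a$ (not $-\tfrac1n$), and with the paper's conventions $L_\ph$ on $\cE[w]$ is $\ph^a\na_a-\tfrac{w}{n}\na_a\ph^a$ (not $+\tfrac{w}{n}$); the two sign errors in your check cancel, so the conclusion is unaffected, but the intermediate formulas as written are off.
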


\begin{proof}
It is sufficient to verify the theorem on $\cE[w]$ and $\cE^a$ since both
operators $L_\ph$ and $I_\ph^\form{A} \bbD_\form{A}$
obey the Leibniz rule and $\cE_b \cong \cE^a[-2]$. 
Using  \nn{doubleD} and \nn{Iph} we have 
$I_\ph^\form{A} \bbD_\form{A} = \ph^a \na_a - \frac{2}{n} (\na_a \ph^a)$ on 
$\cE[2]$. Thus using \nn{fundDforms} (and \nn{Iph} below) we obtain
\begin{align*}
I_\ph^\form{B} \bbD_\form{B} f_a &= \ph^b \na_b f_a 
+ (\na_{[a} \ph_{b]}) f^b - \frac{1}{n} (\na_b \ph^b) f_a \\
&=  \ph^b \na_b f_a - f^b \na_b \ph_a + 
f^b \big[ \frac{1}{2} (\na_b \ph_a + \na_a \ph_b) 
- \frac{1}{n} \bg_{ab} \na^c \ph_c \bigr] 
\end{align*}
on $f_a \in \cE_a[2] \cong \cE^b$.
The square bracket in the display is the conformal Killing operator, and thus
vanishes.
The equality of $L_\ph$ and $I_\ph^\form{A} \bbD_\form{A}$ on $\cE[w]$
is even simpler, and hence the general case follows.
\end{proof}
Note it  obvious from the proof that the proposition does not hold without the 
assumption that $\ph^a \in \cE^a$ is a conformal Killing vector.

The conformal invariance of the operator $P: \cV \to \cW$ (between
completely reducible, bundles $\cV$ and $\cW$) is
given by the property $L_\ph P = P L_\ph$ for every conformal Killing
field $\ph^a \in \cE^a$. That is, every conformal Killing vector
$\ph^a$ provides a symmetry of the operator $P$.

As is well known, conformal invariance can equivalently be verified
from a formula for the operator $P$. In particular for each
conformally invariant operator, and a choice of metric from the
conformal class, there is a formula in terms of the Levi-Civita
connection $\na$, its curvature, and various algebraic projections
which express the operator as a natural (pseudo-)Riemmanian
differential operator. The hallmark of conformal invariance is then
that this operator is unchanged if we use the same formula when
starting with a different metric form the conformal class.  Now, given
such a formula for $P: \cV \to \cW$, we have also the (tractor
coupled) operator $P^\na: \cV \otimes \cE_\bullet \to \cW \otimes
\cE_\bullet$ given by the same formula where $\na$ is now assumed to
be coupled Levi-Civita-tractor connection. Then $P^\na$ is also
conformally invariant.  We shall often write $P$ instead of $P^\na$ to
simplify the notation.

\begin{theorem} \label{appmain}
On a conformally flat manifold, let $P: \cV \to \cW$ be a conformally
invariant operator between completely reducible tensor natural bundles
$\cV$ and $\cW$.  Then
\begin{equation*} 
P^\na \bbD_{\form{A}_1} \cdots \bbD_{\form{A}_p} = 
\bbD_{\form{A}_1} \cdots \bbD_{\form{A}_p} P: 
\cV \to \cE_{\form{A}_1 \ldots \form{A}_p} \otimes \cW. 
\end{equation*}
\end{theorem}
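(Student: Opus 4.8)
The plan is to reduce the statement to the single-$\bbD$ case by induction on $p$, and then to prove that single case by combining the "infinitesimal" conformal invariance $L_\ph P = P L_\ph$ (valid for all conformal Killing fields) with a parallel-tractor decoupling trick. First I would set $p=1$ and try to show $P^\na \bbD_\form{A} = \bbD_\form{A} P^\na$ as operators $\cV \to \cE_\form{A} \otimes \cW$. The key observation is Proposition \ref{lie}: for any conformal Killing vector $\ph^a$ with associated parallel tractor $I_\ph^\form{A}$ we have $I_\ph^\form{A}\bbD_\form{A} = L_\ph$ on any tensor bundle. Hence contracting the desired identity against $I_\ph^\form{A}$ gives $P^\na L_\ph = L_\ph P^\na$ on $\cV$ — but with $\na$ the coupled Levi-Civita–tractor connection, so this is the conformal invariance of the tractor-coupled operator $P^\na$, which holds because $P^\na$ is itself conformally invariant (as noted in the paragraph before the theorem).

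The subtlety is that contracting against $I_\ph^\form{A}$ for one Killing field does not by itself recover the full tractor identity; one needs enough parallel tractors $I_\ph^\form{A}$ to span $\cE^\form{A} = \Lambda^2\mathcal{T}$ pointwise. On a conformally flat manifold this is exactly the case: the space of conformal Killing vectors has dimension $\dim\mathfrak{so}(s+1,s'+1) = \binom{n+2}{2}$, and $\ph \mapsto I_\ph^\form{A}$ is the isomorphism onto parallel sections of $\cE^\form{A}$; locally these parallel sections trivialise $\Lambda^2\mathcal{T}$, so at each point the values $I_\ph^\form{A}$ range over all of $\cE^\form{A}$. Therefore the identity $(P^\na \bbD_\form{A} - \bbD_\form{A} P^\na) V$, which is a section of $\cE_\form{A} \otimes \cW$, is annihilated by contraction with every element of a spanning set of $\cE^\form{A}$ at each point, hence vanishes. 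This establishes the $p=1$ case.

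For the inductive step, assume the identity for $p-1$. Apply $\bbD_{\form{A}_1}$ on the left of the relation $P^\na \bbD_{\form{A}_2}\cdots\bbD_{\form{A}_p} = \bbD_{\form{A}_2}\cdots\bbD_{\form{A}_p} P$; here one must use that $\bbD_\form{A}$ extends to all tensor (and tractor) bundles compatibly — in particular the target bundle $\cE_{\form{A}_2\ldots\form{A}_p}\otimes\cW$ is again a completely reducible tensor-type bundle, so the $p=1$ result applies with $\cW$ replaced by $\cE_{\form{A}_2\ldots\form{A}_p}\otimes\cW$ and the tractor-coupled operator in question being $\bbD_{\form{A}_2}\cdots\bbD_{\form{A}_p}P$ viewed through its Levi-Civita–tractor formula. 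The one point to check is that composing the coupled operators behaves as expected, i.e.\ that $(\,\cdot\,)^\na$ is compatible with composition so that the formula-level manipulations are legitimate; this is immediate from the definition of $P^\na$ as "the same formula with $\na$ coupled."

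The main obstacle, I expect, is precisely the spanning argument in the second paragraph: one must verify carefully that on a conformally flat manifold the parallel tractors $I_\ph^\form{A}$ coming from conformal Killing fields fill out $\Lambda^2\mathcal{T}$ at every point (equivalently, that the holonomy of the tractor connection is trivial and that $\ph\mapsto I_\ph$ is onto parallel sections). Everything else — the reduction to $p=1$ and the induction — is formal once Proposition \ref{lie} and the conformal invariance of $P^\na$ are in hand. If the spanning fails at isolated points one instead argues by continuity/density, but in the conformally flat setting this does not arise.
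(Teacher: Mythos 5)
Your $p=1$ argument is essentially identical to the paper's: establish $I_\ph^{\form{A}}[\bbD_{\form{A}},P]=0$ for every conformal Killing field $\ph$ via Proposition~\ref{lie}, conformal invariance, and parallelness of $I_\ph$, then use that the parallel tractors $I_\ph$ span $\cE^{\form{A}}$ pointwise in the conformally flat setting. That part is fine, and your worry at the end is misplaced --- the spanning is the unproblematic step.

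The inductive step, however, has a genuine gap. You apply $\bbD_{\form{A}_1}$ to $P^\na\bbD_{\form{A}_2}\cdots\bbD_{\form{A}_p}=\bbD_{\form{A}_2}\cdots\bbD_{\form{A}_p}P$ and then invoke ``the $p=1$ result with $\cW$ replaced by $\cE_{\form{A}_2\ldots\form{A}_p}\otimes\cW$, the operator being $\bbD_{\form{A}_2}\cdots\bbD_{\form{A}_p}P$.'' But $\cE_{\form{A}_2\ldots\form{A}_p}\otimes\cW$ is \emph{not} a completely reducible \emph{tensor} bundle in the sense of the theorem (a subbundle of $(\bigotimes\cE_a)\otimes(\bigotimes\cE^b)\otimes\cE[w]$); it carries tractor indices. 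The $p=1$ argument hinges on Proposition~\ref{lie}, $I_\ph^{\form{A}}\bbD_{\form{A}}=L_\ph$, which is stated and proved only on tensor bundles, and it genuinely fails on tractor-valued bundles: from \nn{doubleD}, $\bbD_{\form{A}}$ on $\cE_\bullet[w]$ sees only the weight $w$ and the coupled $\na$, not the tractor indices, whereas the (conformal Killing) Lie derivative on tractors involves an additional endomorphism term --- precisely the $2\bbH_{\form{A}}\sharp$ by which $\cD_{\form{A}}$ differs from $\bbD_{\form{A}}$. So $I_\ph^{\form{A}}\bbD_{\form{A}}$ is not the Lie derivative once tractor indices are present, and the $p=1$ proof does not transfer.

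The paper closes the induction differently and correctly: once $[\bbD_{\form{A}},P]=0$ is established on $\cV$ as an identity of Levi--Civita formulae, one observes that the \emph{formula} for $[\bbD_{\form{A}},P]$ on $\cE_\bullet\otimes\cV$ is identical (since $\bbD$ and $P^\na$ depend only on the coupled connection $\na$ and the conformal weight), so the commutator vanishes on all $\cE_\bullet\otimes\cV$; the theorem then follows by iteration. That ``the formulae formally coincide'' observation --- not a re-run of the Lie-derivative argument --- is the step you are missing. Your closing remark that $(\cdot)^\na$ ``is compatible with composition'' gestures in this direction but is not a substitute for it, because you are trying to re-use Proposition~\ref{lie} on a bundle where it does not hold.
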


\begin{proof}
It is sufficient to prove the theorem in the (globally) flat case.
First assume $p=1$ and consider a conformal Killing field $\ph^a \in
\cE^a$. Then $I_\ph$ is parallel (see e.g.\ \cite{GoMathann}, but this
follows here easily from the fact the standard tractor connection is
flat).  Then $[P^\na, I_\ph^\form{A}] =0$ and using Proposition
\ref{lie} plus the fact that $L_\ph P = P L_\ph$, from conformal the
invariance of $P$, means that $I_\ph^\form{A} [\bbD_\form{A},
  P^\na]=0$ for every conformal Killing vector $\ph^a$. The space of
conformal Killing fields on the conformally flat manifolds has the
maximal dimension, i.e.\ the dimension of the bundle
$\cE_\form{A}$. Therefore $[\bbD_\form{A},P^\na]=\bbD_\form{A} P
-P^\na \bbD_\form{A} =0$ on $\cV$.  Now it follows from the definition
of $\bbD$ that the formulae for $[\bbD_\form{A},P^\na]$ on $\cV$ and
$\cE_\bullet \otimes \cV$ formally coincide.  Since
$[\bbD_\form{A},P^\na]=0$ on $\cV$, and the tractor connection is
flat, this formula yields a zero operator on every bundle $\cE_\bullet
\otimes \cV$. Using an obvious induction, the theorem follows.
\end{proof}

\vspace{1ex}

Below we shall identify 2-form tractor fields $F_\form{A}=F_{A^1A^2}$ with
endomorphism fields of the standard tractor bundle according to the
rule $(F \sharp f)_B : = F_B{}^P f_P$ for $f_B \in \cE_B$. This also
defines the notation $\sharp$.
Moreover, we shall define $\sharp$ to be trivial on the bundles $\cE_a$ and
$\cE[w]$, and then extend this action to tensor products of $\cE_A$, $\cE_a$
and $\cE[w]$ by the Leibniz rule. Note that since $F$ is skew it
yields an (pseudo-)orthogonal action pointwise and hence preserves the
$SO(p+1,q+1)$ decompositions of tractor bundles.

Theorem \ref{appmain} above is one of the primarily tools for our
subsequent construction of symmetries. However there are some
conceptual gains in linking this to some related results and so we
complete this section with these observations.

The double-D operator discussed above reflects a more general operator
called \idx{fundamental derivative} from \cite{CapGotrans} (where it
is called the fundamental-D operator).  The specialisation of this to
conformal geometry provides, for any natural bundle $\cV$, a
conformally invariant differential operator $\cD: \cV \to \cA \otimes
\cV$, where $\cA=\Lambda^2 \cT$ is often called the {\em adjoint
  tractor bundle} (because it is modelled on $\frak{g}=\frak{so}_{s+1,s'+1}$).
Since there is a natural inclusion $\cA
\hookrightarrow \End{\cE_\bullet}$ via $\sharp$, we may form 
($(-1)$--times) 
the symmetrisation of the contracted composition, to be denoted by
$$
\cD^2 : \cV
 \to (\End{\cV}) \otimes \cV.
$$ 
In the abstract index notation we write $\cD_A{}^B$ (or
$\cD_\form{A}$, using the identification $\cA\cong \ce_{A^1A^2}$) for
the fundamental derivative and so 
$\cD^2_{AB}=- \cD^C{}_{(A}\cD_{B)C}$.

We shall use $\cD$ only on weighted tensor bundles $\cV \subseteq 
(\bigotimes \cE_a) \otimes (\bigotimes \cE^b) \otimes \cE_\bullet[w]$.
Recall the fundamental derivative obeys the Leibniz rule and actually 
$\cD_\form{A} = \bbD_\form{A}$ on irreducible bundles. (In fact, the double-D
was defined in such way in \cite{CapGotrans}.) 
To show the difference between $\bbD$ and $\cD$ and, more generally,
the analogue of \nn{doubleD} we shall need certain special tractor 
sections and their corresponding algebraic actions
on tractor bundles as follows:
\begin{eqnarray} \label{hact}
\begin{split}
  &\bbH_\form{AB} = h_{A^0B^0}h_{A^1B^1}, 
  &&\bbH_\form{A} \sharp = h_{A^0B^0}h_{A^1B^1} \,\sharp_\form{B} \\
  &\wt{\bbH}_{AD\form{BC}} = h_{(A|B^0|}h_{D)C^0}h_{B^1C^1}, 
  &&\wt{\bbH}_{AD} \sharp\sharp  = h_{(A|B^0|}h_{D)C^0}h_{B^1C^1} \, 
   \sharp_\form{B}\, \sharp_{\form{C}} 
\end{split}
\end{eqnarray}
where, as usual, we skew over the index pairs $A^0A^1$, $B^0B^1$ and
$C^0C^1$.  Here the subscript of $\sharp$ indicates which skew
symmetric component is considered as an endomorphism. That is, for
example, $(\bbH_\form{A} \sharp f)_C = h_{A^0C}f_{A^1}$ for $f_C \in
\cE_C$, and this extends to tensor powers of the tractor bundle by the
Leibniz rule.  It also indicates the order of applications of these
endomorphisms in the case of $\wt{\bbH}$.

We need $\cD$ only up to a (nonzero) scalar multiple and our choice 
will differ from \cite{luminy} by $-1$. 
Explicit formulae of $\cD$ and $\cD^2$ on weighted 
tractor bundles $\cE_\bu[w]$ are given by
\begin{eqnarray} \label{fundD}
\begin{split}
  &\cD_\form{A} = 2(w\W_\form{A} + \X_{\form{A}}^{\,a} \na_a 
   + \bbH_\form{A} \sharp) \\
  &\cD^2_{AD} = -(w h_{AD} + X_{(A}D_{D)} 
   + 4h_{(A|B^0|}\bbD_{D)B^1}\, \sharp_\form{B} 
   - 4\wt{\bbH}_{AD} \sharp \sharp )
\end{split}
\end{eqnarray}
where we skew
over $[B^0B^1]$ and $\sharp_\form{B}$ indicates the skewed symmetric
component which is considered as an endomorphism.  That is,
$\cD_\form{A} = \bbD_\form{A} + 2\bbH_\form{A}
\sharp$. 

\begin{corollary} \label{appfund}
Assume the locally conformally flat setting.
Let $P: \cV \to \cW$ be a conformally invariant operator between
irreducible weighted tensor bundles $\cV$ and $\cW$.
Then
\begin{equation*}
P^\nabla\, \cD_{\form{A}_1} \cdots \cD_{\form{A}_p} = 
\cD_{\form{A}_1} \cdots \cD_{\form{A}_p} P: 
\cV \to \cE_{\form{A}_1 \ldots \form{A}_p} \otimes \cW.
\end{equation*}
\end{corollary}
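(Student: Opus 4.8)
The plan is to deduce Corollary \ref{appfund} from Theorem \ref{appmain} by exploiting the explicit relation $\cD_\form{A} = \bbD_\form{A} + 2\bbH_\form{A}\,\sharp$ recorded in \nn{fundD}, together with the fact that the correction term $\bbH_\form{A}\,\sharp$ is a purely algebraic, pointwise action which (being built from the tractor metric $h$, hence $SO(s+1,s'+1)$--invariant) commutes with every tractor--coupled natural operator. First I would observe that since $P^\na$ is obtained from a universal formula in the coupled Levi-Civita--tractor connection, its curvature, and algebraic projections, it is in particular equivariant for the pointwise $\frak{so}(h)$--action; since each $\bbH_{\form{A}_i}\,\sharp$ is exactly such a pointwise $\frak{so}(h)$--action (on the appropriate tensor slot, extended by Leibniz), we get $[P^\na, \bbH_{\form{A}_i}\,\sharp] = 0$ on $\cV$ and on all bundles $\cE_\bullet \otimes \cV$.

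Next I would expand the composition $\cD_{\form{A}_1}\cdots\cD_{\form{A}_p}$ via $\cD_{\form{A}_i} = \bbD_{\form{A}_i} + 2\bbH_{\form{A}_i}\,\sharp$ into a sum of $2^p$ terms, each a product of $\bbD$'s and $\bbH\,\sharp$'s in some order. For a single such term, I would push $P^\na$ from the left all the way to the right: moving it past a $\bbD_{\form{A}_i}$ uses Theorem \ref{appmain} (applied with the target bundle enlarged by the remaining tractor indices, which is legitimate since that theorem is stated for $P^\na$ on arbitrary $\cE_\bullet \otimes \cV$), and moving it past a $\bbH_{\form{A}_i}\,\sharp$ uses the commutation just noted. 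One subtlety to check is that when $P^\na$ passes a factor, the ``coupled'' connection in $P^\na$ acquires the extra tractor indices sitting to its left or right; but this is precisely the inductive bookkeeping already handled in the proof of Theorem \ref{appmain}, and the $\bbH\,\sharp$ factors, being algebraic, do not interfere. Summing the $2^p$ intertwined identities back up reassembles $\cD_{\form{A}_1}\cdots\cD_{\form{A}_p}\,P$ on the right, giving the claim. An even cleaner packaging: prove by induction on $p$ that $[P^\na, \cD_\form{A}] = 0$ as operators $\cV \to \cE_\form{A}\otimes\cW$ and on each $\cE_\bullet\otimes\cV$, the base case being $[P^\na,\bbD_\form{A}]=0$ (Theorem \ref{appmain}, $p=1$) plus $[P^\na, \bbH_\form{A}\,\sharp]=0$, and the inductive step being the standard ``if a coupled operator commutes with $\cD$ on $\cV$ then it commutes on $\cE_\bullet\otimes\cV$'' argument driven by the Leibniz rule for $\cD$.

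The main obstacle I anticipate is not any deep point but the care needed in justifying $[P^\na, \bbH_\form{A}\,\sharp] = 0$ at the required level of generality: one must be precise that $\sharp$ extends to tensor powers by Leibniz (as stated after \nn{doubleD}), that it acts as a genuine $\frak{so}(h)$ element on each slot and hence preserves all the irreducible projections used in writing down the formula for $P$, and that $P^\na$ really is assembled only from ingredients equivariant under this action (the coupled connection, the tractor and Weyl/Cotton curvatures, and $\frak{so}(h)$--invariant algebraic maps). Granting the clean formulation of $P^\na$ in Section \ref{Ds} and the Leibniz property of $\cD$, this is routine but should be spelled out. A final remark worth including: because $\cD_\form{A} = \bbD_\form{A}$ on irreducible bundles, Corollary \ref{appfund} is in fact a restatement of Theorem \ref{appmain} in the language of the fundamental derivative; the content is the coherent extension of the intertwining to iterated $\cD$'s via the Leibniz rule, which is exactly what the induction above supplies.
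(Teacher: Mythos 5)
Your proof is correct and takes essentially the same route as the paper: the ``cleaner packaging'' induction you offer at the end is precisely the paper's argument, which splits off the leftmost $\cD_{\form{A}_0} = \bbD_{\form{A}_0} + 2\bbH_{\form{A}_0}\sharp$, handles the $\bbD$ piece via Theorem \ref{appmain} and the inductive hypothesis, and handles the $\bbH\sharp$ piece by noting it is algebraic. Your more careful justification of $[P^\nabla,\bbH_\form{A}\sharp]=0$ via $\frak{so}(h)$--equivariance of the coupled operator is a worthwhile elaboration of what the paper condenses into the phrase ``some additional trace factors,'' but it is the same idea, not a different proof.
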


\begin{proof}
We shall use an induction. The case $p=1$ is obvious
as $\cD_\form{A} = \bbD_\form{A}$ on $\cV$ and $\cW$. 
Assume
the corollary holds for a fixed integer $p$. Then 
$$
\cD_\form{\form{A}_0} \cD_{\form{A}_1} \cdots \cD_{\form{A}_p} = 
\bbD_{\form{A}_0} \cD_{\form{A}_1} \cdots \cD_{\form{A}_p}
+ 2 \bbH_{\form{A}_0} \sharp \, \cD_{\form{A}_1} \cdots \cD_{\form{A}_p}.
$$
The operator $P$ commutes with the first term on the right hand side
using $[P,{\bbD}_{\form{A}_0}]=0$ and the inductive assumption.
Since the second term involves only $\cD_{\form{A}_1} \cdots \cD_{\form{A}_p}$
with some additional trace factors, $P$ commutes with the second term 
(using the induction) as well.
\end{proof}

\begin{lemma} \label{doublefund}
Assume the locally conformally flat setting. Then 
$[\cD_\form{A},\bbD_\form{B}] =0$ on $\cV \otimes \cE_\bullet$
for $\cV$ irreducible.
\end{lemma}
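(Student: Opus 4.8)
The plan is to reduce the claim to a statement that has already been proved, namely Theorem~\ref{appmain} (equivalently Corollary~\ref{appfund}), by treating $\bbD_\form{B}$ itself as a "conformally invariant operator" of the type those results apply to. First I would recall that on the irreducible bundle $\cV$ the fundamental derivative and the double-D agree, so $\cD_\form{A} = \bbD_\form{A}$ there; thus on $\cV$ the operator in question is really $[\bbD_\form{A}, \bbD_\form{B}]$ acting into $\cE_\form{A} \otimes \cE_\form{B} \otimes \cV$. The key observation is that $\bbD_\form{B} \colon \cV \to \cE_\form{B} \otimes \cV$, while not mapping between \emph{irreducible} bundles, is nevertheless conformally invariant with target the completely reducible bundle $\cE_\form{B}\otimes\cV$, and it is built naturally from $\na$, its curvature (which vanishes in the flat case), and algebraic projectors. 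Hence Theorem~\ref{appmain}, applied with $P = \bbD_\form{B}$ and $\cW = \cE_\form{B}\otimes\cV$, gives exactly $\bbD_\form{A}^{\,\na}\,\bbD_\form{B} = \bbD_\form{B}\,\bbD_\form{A}$ on $\cV$; but on $\cV$ the coupled operator $\bbD_\form{A}^{\,\na}$ is just $\bbD_\form{A}$ acting into the tractor-coupled bundle, which is how the left side of $[\cD_\form{A},\bbD_\form{B}]$ is interpreted. Subtracting the two compositions yields $[\bbD_\form{A},\bbD_\form{B}] = 0$ on $\cV$.

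Next I would bootstrap from $\cV$ to $\cV \otimes \cE_\bullet$. The point, exactly as in the proof of Theorem~\ref{appmain}, is that the formula for the operator $[\cD_\form{A},\bbD_\form{B}]$ on $\cV\otimes\cE_\bullet$ differs from its formula on $\cV$ only by the addition of the algebraic $\sharp$-terms coming from the tractor-coupling of $\cD_\form{A}$, i.e.\ the $\bbH_\form{A}\sharp$ part of \nn{fundD}: on the coupled bundle, $\cD_\form{A} = \bbD_\form{A} + 2\bbH_\form{A}\sharp$. So I would expand
$$
[\cD_\form{A},\bbD_\form{B}] = [\bbD_\form{A},\bbD_\form{B}] + 2[\bbH_\form{A}\sharp,\bbD_\form{B}]
$$
on $\cV\otimes\cE_\bullet$. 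The first commutator vanishes by the step above together with the remark (in the proof of Theorem~\ref{appmain}) that $[\bbD_\form{A},\bbD_\form{B}]$ has the same formula on $\cV$ and on $\cV\otimes\cE_\bullet$, hence is zero on the latter too. For the second commutator, $\bbH_\form{A}\sharp$ is a purely algebraic (fibrewise $\frak{so}(h)$-equivariant) endomorphism built from $h_{AB}$, so $[\bbH_\form{A}\sharp,\bbD_\form{B}]$ reduces, via the Leibniz rule for $\bbD_\form{B}$ and \nn{fundDforms}/\nn{doubleD}, to a combination of $\sharp$-type contractions with the tractor projectors, which one checks cancels identically using $\na_p h_{AB}=0$ and the contraction identities \nn{trmet}. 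I would present this as a short direct computation rather than grinding it out.

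The main obstacle I anticipate is bookkeeping with the tractor-coupling conventions: making precise that "the left-hand side $[\cD_\form{A},\bbD_\form{B}]$" is exactly the coupled operator $\bbD_\form{A}^{\,\na}$ applied after $\bbD_\form{B}$ minus $\bbD_\form{B}$ applied after $\cD_\form{A}=\bbD_\form{A}$, so that Theorem~\ref{appmain} literally delivers the vanishing; and then separating cleanly the "differential" part of the commutator (handled by Theorem~\ref{appmain}) from the "algebraic" $\bbH$-correction (handled by hand). A secondary subtlety is that $\bbD_\form{B}$ obeys only a Leibniz rule rather than mapping irreducibles to irreducibles, so one should double-check that Theorem~\ref{appmain}, stated for completely reducible $\cV,\cW$, indeed covers $P=\bbD_\form{B}$ — it does, since one may decompose the target $\cE_\form{B}\otimes\cV$ into irreducibles and apply the theorem componentwise. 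Once these identifications are set up, the proof is essentially two applications of already-established results plus one routine $h$-contraction.
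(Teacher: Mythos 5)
Your proof has a genuine gap, and in fact the intermediate claim on which it turns is false. You assert that Theorem~\ref{appmain}, applied with $P=\bbD_\form{B}$, yields $[\bbD_\form{A},\bbD_\form{B}]=0$ on an irreducible $\cV$. But this contradicts the paper's own explicit computation \nn{DDdec}, whose second line gives
$$
\tfrac{1}{2}\bigl(\bbD_\form{A}\bbD_\form{B}-\bbD_\form{B}\bbD_\form{A}\bigr)
= -2\,h_{A^0B^0}\,\bbD_{A^1B^1}
$$
on $\cE_\bullet[w]$ — a nonzero first-order operator. So $[\bbD_\form{A},\bbD_\form{B}]\neq 0$ even on a density bundle, and your step~2 cannot be right.

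The reason Theorem~\ref{appmain} does not apply to $P=\bbD_\form{B}$ is that its hypothesis requires $P$ to be a conformally invariant map between \emph{tensor} bundles (subbundles of $(\bigotimes\cE_a)\otimes(\bigotimes\cE^b)\otimes\cE[w]$, as stated in the setup of Section~\ref{Ds}), whereas $\bbD_\form{B}$ has a tractor-valued range $\cE_\form{B}\otimes\cV$. Decomposing the range into irreducibles does not fix this, since those irreducible summands are still tractor bundles, not tensor bundles. The deeper point is that the proof of Theorem~\ref{appmain} hinges on Proposition~\ref{lie}, namely $I_\ph^\form{A}\bbD_\form{A}=L_\ph$, and that identity only holds on tensor bundles; on tractor-valued bundles the algebraic $\sharp$-action is nontrivial and the corresponding identity involves the fundamental derivative $\cD$, not $\bbD$. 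The lemma is precisely engineered to encode the cancellation between the nonvanishing $[\bbD_\form{A},\bbD_\form{B}]$ and the $\bbH_\form{A}\sharp_\form{B}$ correction that $\cD_\form{A}$ picks up on the new $\form{B}$ index. The paper's actual proof does not go through Theorem~\ref{appmain} at all: it rewrites $[\cD_\form{A},\bbD_\form{B}]=[\cD_\form{A},\cD_\form{B}]+4h_{B^0A^0}\cD_{B^1A^1}$ and then invokes the known curvature formula for $[\cD_\form{A},\cD_\form{B}]$ from \cite[Proposition, p.\ 21]{CapGotrans}, reduced against the direct check on $\cE_\bullet[w]$ via \nn{DDdec}.
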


\begin{proof}
From \nn{doubleD} and \nn{fundD} we obtain
$$
[\cD_\form{A},\bbD_\form{B}] = [\cD_\form{A},\cD_\form{B}]
- 2 \cD_\form{A} \bbH_\form{B} \sharp 
+ 2 \bbH_\form{B} \sharp \cD_\form{A}
= [\cD_\form{A},\cD_\form{B}] + 4 h_{B^0A^0} \cD_{B^1A^1}.
$$
Thus contracting arbitrary sections $I^\form{A} \in \cE^\form{A}$, 
$\bar{I}^\form{B} \in \cE^\form{B}$ into the previous display
we get
$$
I^\form{A} \bar{I}^\form{B} [\cD_\form{A},\bbD_\form{B}] = 
I^\form{A} \bar{I}^\form{B} [\cD_\form{A},\cD_\form{B}]
+ 4 I^{A^1P} \bar{I}_P{}^{B^1} \cD_{A^1B^1}.
$$
We put $[I,\bar{I}]^\form{C} := 4 I^{C^0P} \bar{I}_P{}^{C^1}$.
On the one hand, $I^\form{A} \bar{I}^\form{B} [\cD_\form{A},\cD_\form{B}]$
is given \cite[Proposition, p.\ 21]{CapGotrans}. 
On the other hand, a direct computation verifies the statement on 
$\cE_\bullet[w]$, cf.\ \nn{DDdec} below.
Therefore by restricting to this case
(of $\cE_\bullet[w]$), it follows that our notation $[I,\bar{I}]$
coincides precisely with $\{I,\bar{I}\}$ used in \cite{CapGotrans}.
Thus using \cite[Proposition, p.\ 21]{CapGotrans} on 
$\cV \otimes \cE_\bullet[w]$, the lemma follows. 
\end{proof}

\begin{remark}
There is also a more conceptual proof of the previous corollary (thus
also of Theorem \ref{appmain}). Motivated by \cite[Theorem
  3.3]{CapGotrans}, we note that, at each point $x \in M$,  the section
$$
\overline{\bbD}^{(k)} \si := 
(\si,\bbD \si, \bbD^{(2)} \si = \bbD \bbD \si, \ldots, \bbD^{(k)} \si)
\in \overline{\cA}^{(k)}(\cV) \subseteq 
\cV \oplus \cE_\form{A} \otimes \cV \oplus \ldots \oplus 
\bigotimes^k \cE_\form{A} \otimes \cV,
$$ contains the data of the entire $k$-jet of $\si \in \cV$. Note
although here we assume $\cV$ is irreducible, the operator
$\overline{\bbD}^{(k)}$ is defined also on bundles of the form $\cV
\otimes \cE_\bullet$. From the general theory, the subbundle
$\overline{\cA}^{(k)}(\cV)$ (defined in the obvious way by the
display) is an induced bundle of a principle $H$--bundle where $H
\subseteq SO(s+1,s'+1)$ is a parabolic subgroup. 
It is straightforward to argue that any 
conformally invariant $k$-order operator on $\cV$ is given by
$\overline{\bbD}^{(k)}$ followed by a suitable $H$-homomorphism $\Ph$
on this subbundle. We denote this homomorphism by $\Ph_P$ in the case
of the operator $P$.

Our aim is to commute $P = \Ph_P \circ \overline{\bbD}^{(k)}$ and 
$\bbD_\form{B}$. More precisely, we put
$$
P^\na := (\id|_{\cE_\form{B}} \otimes \Ph_P) \circ \overline{\bbD}^{(k)}: 
\cE_\form{B} \otimes \cV \to \cE_\form{B} \otimes \cW.
$$
Observe the formulae for
$\overline{\bbD}^{(k)}: \cV \to  \overline{\cA}^{(k)}(\cV)$ and
$\overline{\bbD}^{(k)}: 
\cE_\form{B} \otimes \cV \to \cE_\form{B} \otimes \overline{\cA}^{(k)}(\cV)$
are formally the same. (Note the implicit $\na$ is interpreted as the coupled 
Levi-Civita-tractor connection in the latter case).
That means also the formulae for $P: \cV \to \cW$ and 
$P^\na: \cE_\form{B} \otimes \cV \to \cE_\form{B} \otimes \cW$ 
are given by the same formal expression. Hence our definition of 
$P^\na$ coincides with that given before Theorem \ref{appmain}. 

Now we are ready to show that $\cD_\form{B} P = P^\na \cD_\form{B}$
on $\cV$, i.e.\
$$
(\Ph_p \otimes \id|_{\cE_\form{B}}) \circ \overline{\bbD}^{(k)} \cD_\form{B}
= \cD_\form{B} \bigl( \Ph_p  \circ \overline{\bbD}^{(k)} \bigr):
\cV \to \cE_\form{B} \otimes \cW.
$$
Clearly $\cD_\form{B} \Ph_P = (\Ph_P \otimes \id|_{\cE_\form{B}}) \cD_\form{B}$.
Since $[\cD_\form{B},\bbD_\form{A}]=0$ from Lemma \ref{doublefund} and 
$\cD_\form{B}$ preserves
subbundles (of the space $\cD_\form{B}$ acts on),   
$(\Ph_P \otimes \id|_{\cE_\form{B}}) \bbD_{\form{A}_1} \ldots \cD_\form{B}
\ldots \bbD_{\form{A}_i}$ is conformally invariant and the previous
display follows.

Henceforth we shall write $P$ instead of $P^\na$ for simplicity.
Finally note although we have shown 
$[\cD_\form{B}, P] = 0$ only on an irreducible $\cV$, the same reasoning shows
$[\cD_\form{B}, P] = 0$ also on bundles $\cV \otimes \cE_\bullet$.
Therefore this remark offers an alternative proof of the previous corollary
(thus also of Theorem \ref{appmain}).
\end{remark}

The previous results provide an obvious way to construct symmetries
of conformally invariant operators. 
Assume the section
$$
I^{\form{A}_1^{} \ldots \form{A}_p^{}B_1^{}B_1' \ldots B_r^{}B_r'}
\in \cE^{\form{A}_1^{} \ldots \form{A}_p^{}B_1^{}B_1' \ldots B_r^{}B_r'}
$$
is parallel. Then from Theorem \ref{appmain} and Corollary \ref{appfund} the differential operators
\begin{eqnarray} \label{SbbS}
\begin{split}  
&S = I^{\form{A}_1^{} \ldots \form{A}_p^{}B_1^{}B_1' \ldots B_r^{}B_r'}
\cD_{\form{A}_1^{}} \ldots \cD_{\form{A}_p^{}} 
\cD^2_{B_1^{}B_1'} \ldots \cD^2_{B_r^{}B_r'}
\quad \text{and} \\
&\bbS = I^{\form{A}_1^{} \ldots \form{A}_p^{}B_1^{}B_1' \ldots B_r^{}B_r'}
\bbD_{\form{A}_1^{}} \ldots \bbD_{\form{A}_p^{}} 
\bbD^2_{B_1^{}B_1'} \ldots \bbD^2_{B_r^{}B_r'}
\end{split}
\end{eqnarray}
commute with $P$. That is $S$ and $\bbS$ are symmetries
of the operator $P$.

\begin{proposition} \label{fund=double}
Assume the tractor 
$I^{\form{A}_1^{} \ldots \form{A}_p^{}B_1^{}B_1' \ldots B_r^{}B_r'}$
is parallel and irreducible, $I = I|_\boxtimes$. Then $S = \bbS$
on $\cE[w]$.
\end{proposition}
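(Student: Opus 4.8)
The plan is to compare $S$ and $\bbS$ factor by factor using the formulae \nn{fundD}, which express $\cD_\form{A}=\bbD_\form{A}+2\bbH_\form{A}\sharp$ and $\cD^2_{AD}=\bbD^2_{AD}-4h_{(A|B^0|}\bbD_{D)B^1}\sharp_\form{B}+4\wt{\bbH}_{AD}\sharp\sharp$; that is, the fundamental derivative differs from the double-$D$ operator only by purely algebraic ($\sharp$-type) terms. Substituting these into the definition \nn{SbbS} of $S$ and multiplying the product out, we may write $S=\bbS$ plus a sum of \emph{correction terms}, each of which is a composition of operators in which at least one factor is replaced by one of the $\cD$--$\bbD$ differences above. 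It therefore suffices to show that each correction term is annihilated upon contraction with $I$.

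The mechanism is that every correction term carries an $h$-contraction between two distinct tractor slots of $I$. Indeed, in a given correction term consider the \emph{leftmost} factor that is a $\cD$--$\bbD$ difference. By the explicit shape of the $\sharp$-actions in \nn{hact}, applying this factor to what stands to its right inserts, by the Leibniz rule for $\sharp$, a metric factor $h_{XY}$ in which $X$ belongs to the operator slot of the difference factor -- hence to one of the index groups $\form{A}_1,\dots,\form{A}_p,B_1B_1',\dots,B_rB_r'$ that are contracted into $I$ -- and $Y$ is a tractor index produced by one of the operators lying strictly further to the right, hence a tractor index of a \emph{different} such group (if no such index is available the correction term already vanishes, the $\sharp$-action on a section with no tractor indices being trivial; this is why the source bundle $\cE[w]$ matters). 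Now everything standing to the left of this leftmost difference factor is a string of operators $\bbD_\form{A}$ and $\bbD^2_{AB}$; by \nn{doubleD} these are assembled from the coupled connection $\na$, the tractor projectors $X_A,Z_{Aa},Y_A,\W_\form{A},\X_\form{A}$ and the parallel metric $h$ (together with weight and curvature scalars), and none of these ever contracts an existing tractor index of the section against anything -- they only differentiate or adjoin new tractor indices. Hence the factor $h_{XY}$ survives, possibly with its indices raised, lowered or relabelled, still pairing one slot of $I$ against another. The final contraction with $I$ thus produces a trace of $I$ over two of its tractor indices lying in distinct slots.

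Since by hypothesis $I=I|_\boxtimes$ lies in the Cartan (top) component of the relevant tensor power of $\cE_\form{A}$ and $\cE_{(AB)}$, it is totally trace-free for $h_{AB}$; in particular every trace of the kind just described vanishes. Hence all correction terms vanish and $S=\bbS$ on $\cE[w]$, as claimed. (As the above shows, on a bundle $\cE_\bu[w]$ carrying its own tractor indices the conclusion would fail, since a $\sharp$-correction could pair a slot of $I$ with one of those extra indices, which is not contracted into $I$.) The step requiring care is the claim that the metric factor inserted by the leftmost $\sharp$-correction genuinely links two distinct slots of $I$ and is not subsequently contracted away; this is precisely where the explicit forms \nn{hact} of the $\sharp$-actions and \nn{doubleD} of $\bbD$ and $\bbD^2$ are used, and is the only real obstacle in the argument.
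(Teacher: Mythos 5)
Your overall strategy—substitute $\cD=\bbD+2\bbH\sharp$ and $\cD^2=\bbD^2+\cdots$, expand, and kill the correction terms against the irreducible $I$—is the same as the paper's, and it is the right strategy. But the central claim on which your argument rests, that \emph{every} correction term carries an $h$-contraction between two distinct tractor slots of $I$ so that trace-freeness alone finishes the job, is not correct, and the paper's proof shows precisely where it breaks.

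Look again at \nn{fundD}: the difference $\cD^2_{AD}-\bbD^2_{AD}$ is $-(4h_{(A|B^0|}\bbD_{D)B^1}\sharp_\form{B}-4\wt\bbH_{AD}\sharp\sharp)$, and the first of these is \emph{not} purely algebraic—it carries a $\bbD$. The $\sharp_\form{B}$ does insert a metric factor, but its indices are the \emph{internal} indices $B^0,B^1$ of the correction, not directly slots of $I$; what happens afterwards depends on how the explicit $h_{(A|B^0|}$ and the $\bbD_{D)B^1}$ absorb $B^0$ and $B^1$. Tracing through the Leibniz expansion (as the paper does explicitly), one piece does yield $h_{(B_2B_1}\bbD_{B_1'}{}^{P}\cD^2_{|P|B_2'}\cdots$, with a genuine $h$ joining two slots of $I$, and that piece dies by trace-freeness as you say. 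But the other piece yields
\[
\bbD_{(B_1'B_2}\,\cD^2_{B_1^{}B_2'}\cdots\cD^2_{B_rB_r')},
\]
which has \emph{no} surviving $h$ between two slots of $I$: the $h$ produced by $\sharp_\form{B}$ was consumed in turning the internal $B^1$ of the $\bbD$-factor into a slot index. That term vanishes for a different reason, namely that the two indices $B_1',B_2$ sit in the skew 2-form slot of $\bbD$, whereas the irreducibility of $I$ (being in the Cartan component) forces $I$ to be \emph{symmetric} in $B_1'$ and $B_2$; skew against symmetric gives zero. Your proof never invokes this symmetry mechanism, so the step you yourself flag as ``the only real obstacle'' is exactly where the argument has a hole. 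The fix is to record both vanishing mechanisms for the $\cD^2$-corrections: trace-freeness of $I$ for the pieces that retain an $h$-contraction between slots, and the skew--symmetric incompatibility for the pieces in which the $\sharp$-inserted metric is absorbed into the $\bbD$-factor. With that addition your argument matches the paper's.
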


\begin{proof}
Consider the parallel and irreducible tractor
$I^{\form{A}_1^{} \ldots \form{A}_p^{}B_1^{}B_1' \ldots B_r^{}B_r'}$
and the symmetry $S$ from \nn{SbbS}. Since 
$\cD_\form{A} = \bbD_\form{A} + 2 \bbH_\form{A} \sharp$, the 
difference 
\begin{equation*} 
\cD_{\form{A}_1^{}} \cD_{\form{A}_2^{}} \ldots \cD_{\form{A}_p^{}} 
\cD^2_{B_1^{}B_1'} \ldots \cD^2_{B_r^{}B_r'}-
\bbD_{\form{A}_1^{}} \cD_\form{A_2^{}} \ldots \cD_{\form{A}_p^{}} 
\cD^2_{B_1^{}B_1'} \ldots \cD^2_{B_r^{}B_r'}
\end{equation*}
lives in the trace part of 
$\cE_{\form{A}_1 \ldots \form{A}_p^{} B_1^{}B_1' \ldots B_r^{}B_r'}[w]$,
cf.\ \nn{hact}.
Therefore this difference is killed after contraction with
$I_\ph^{\form{A}_1^{} \ldots \form{A}_p^{}B_1^{}B_1' \ldots B_r^{}B_r'}$.
Repeating this argument for $\cD_{\form{A}_2}, \ldots, \cD_{\form{A}_p}$, we
obtain 
\begin{equation*} 
S = I^{\form{A}_1^{} \ldots \form{A}_p^{}B_1^{}B_1' \ldots B_r^{}B_r'}
\bbD_{\form{A}_1^{}} \ldots \bbD_{\form{A}_p^{}} 
\cD^2_{B_1^{}B_1'} \ldots \cD^2_{B_r^{}B_r'}: \cE[w] \to \cE[w].
\end{equation*}

Now we replace $\cD^2_{B_1^{}B_1'}$ in the previous display by 
$\bbD^2_{B_1^{}B_1'}$. Note 
$I^{\form{A}_1^{} \ldots \form{A}_p^{}B_1^{}B_1' \ldots B_r^{}B_r'}$
commutes with $\bbD_{\form{A}_i}$ and consider
$I^{\form{A}_1^{} \ldots \form{A}_p^{}B_1^{}B_1' \ldots B_r^{}B_r'}$
contracted with
\begin{align*} 
\cD^2_{B_1^{}B_1'} & \cD^2_{B_2^{}B_2'} \ldots \cD^2_{B_r^{}B_r'}-
\bbD^2_{B_1^{}B_1'} \cD^2_{B_2^{}B_2'} \ldots \cD^2_{B_r^{}B_r'} = \\
&=-\bigl( 4h_{(B_1|C^0|} \bbD_{B_1')C^1} \sharp_\form{C}
-4 \wt{\bbH}_{B_1B_1'} \sharp\sharp \bigr)
\cD^2_{B_2^{}B_2'} \ldots \cD^2_{B_r^{}B_r'}.
\end{align*}
where we have used \nn{fundD} and \nn{doubleD}.
The second term in the round brackets on the right hand side vanishes 
after the contraction (using trace-freeness of $I$ again) so it remains
to contract 
$I^{\form{A}_1^{} \ldots \form{A}_p^{}B_1^{}B_1' \ldots B_r^{}B_r'}$
with 
\begin{align*}
4h_{C^0(B_1} \bbD_{B_1')C^1} \sharp_\form{C}
\cD^2_{(B_2^{}B_2'} \ldots \cD^2_{B_r^{}B_r')} = 
&4(r-1) h_{(B_2B_1} \bbD_{B_1'}{}^P \cD^2_{|P|B_2'} \ldots 
\cD^2_{B_r^{}B_r')} \\
&-4(r+1) \bbD_{(B_1'B_2^{}} \cD^2_{B_1^{}B_2'} \ldots \cD^2_{B_r^{}B_r')}
\end{align*}
Here we have used the fact that the indices $B_1B_1' \ldots B_rB_r'$ 
of $I$ are symmetric (because $I$ is irreducible). 
Now the second term on the right hand side
is zero due to skew symmetry of  indices of $\bbD_{B_1'B_2^{}}$ and 
the first term vanishes after contraction with $I$ which is trace-free.
Repeating the same argument for 
$\cD^2_{B_1^{}B_2'}, \ldots, \cD^2_{B_r^{}B_r'}$, the proposition follows.
\end{proof}

Note an analogous statement to the Proposition above holds where
$\cE[w]$ is replaced by any irreducible bundle $\cV$. This may  be
proved along the same lines as in the treatment above. However since 
the details are  technical and not required here, this proof is omitted.

Finally note the operators given by \nn{SbbS} are well defined also on
bundles $\cE_\bullet[w]$. In this  setting, however, they
yield generally different operators $\cE_\bullet[w] \to
\cE_\bullet[w]$.

\section{A Construction of symmetries}\label{consec}

We are now ready to construct canonical symmetries. For a section
$\ph_r^{a_1 \ldots a_p} \in \cE^{(a_1 \ldots a_p)_0}[2r]$ we shall
define the operators $(S_\ph^{},S'_\ph)$ where $S_\ph$ and $S'_\ph$
have leading term $\ph_r^{a_1 \ldots a_p} \na_{a_1} \cdots \na_{a_p}
\De^r$. To do this we use the bijective correspondence between the
linear space of solutions of \nn{1BGG} and certain finite dimensional
$\frak{g}$--modules, cf.\ the discussion around \nn{Avec}. Explicitly,
this is given by differential prolongation in the form of a
differential splitting operator $\cE^{(a_1 \ldots a_p)_0}[2r] \to
\cE^{\form{A}_1^{} \ldots \form{A}_p^{}B_1^{}B_1' \ldots
  B_r^{}B_r'}|_\boxtimes$.  There are many ways of constructing this,
but for our current purposes the splitting operator can be
conveniently expressed using the fundamental derivative.  There is a
certain operator $\mathcal{C}$ known as the {\em curved Casimir}
\cite{CSCas} which is given by $h^{AB}\cD^2_{AB}$. (Properties of the
splitting operators coming from $\cC$ will be used in Proposition
\ref{can:prop}.)  This acts on any natural bundle and, in particular,
on weighted tractor bundles.  It can thus be iterated and we shall use
operators polynomial in $\cC$.  In particular, one gets the splitting
operator as
\begin{equation} \label{split}
  \ph_r^{a_1 \ldots a_p} \mapsto
  \Y^{\form{A}_1}{}_{a_1} \cdots \Y^{\form{A}_p}{}_{a_p} 
  Y^{B_1^{}}Y^{B_1'} \cdots Y^{B_r^{}}Y^{B_r'} \ph_r^{a_1 \ldots a_p}
  \stackrel{Q}{\longrightarrow} 
  \cE^{\form{A}_1 \ldots \form{A}_pB_1^{}B_1' \ldots B_r^{}B_r'} 
\end{equation} 
where $Q$ is an operator polynomial in $\mathcal{C}$, and hence is
polynomial in $\cD $, see \cite{CSCas,GoSiComm}.  We shall denote the
image by $I_\ph^{\form{A}_1 \ldots \form{A}_pB_1B_1' \ldots B_rB_r'}
\in \cE^{\form{A}_1 \ldots \form{A}_pB_1B_1' \ldots
  B_rB_r'}|_\boxtimes$.  The main point we need is that the tractor
$I_\ph$ is parallel if and only if $\ph$ is a solution of the operator
\nn{1BGG}.

\begin{definition} \label{cansym}
Given $\ph = \ph_r^{(a_1 \ldots a_p)_0} \in \cE^{(a_1 \ldots
a_p)_0}[2r]$, $r,p \geq 0$ 
we shall associate a differential operator $S_\ph$ as follows.
Let $I_\ph$ denote the 
tractor corresponding to $\ph$, in the sense of the discussion
surrounding \nn{split} above.
Then via \nn{SbbS},
\begin{equation} \label{can} 
S_\ph: = I_\ph^{\form{A}_1^{} \ldots \form{A}_p^{}B_1^{}B_1' \ldots B_r^{}B_r'}
\bbD_{\form{A}_1^{}} \ldots \bbD_{\form{A}_p^{}} 
\bbD^2_{B_1^{}B_1'} \ldots \bbD^2_{B_r^{}B_r'}~,
\end{equation}
is a well defined differential operator $S_\ph:\mathcal{V}\to
\mathcal{V}$, for any weighted tensor-tractor bundle $\mathcal{V}$.
\end{definition}

Assume $\ph$ is a solution of \nn{1BGG}, and so the tractor $I_\ph$ is
parallel.  It follows immediately from Theorem \ref{appmain}, and the
fact that $I_\ph$ is parallel, that $S_\ph$ is a universal symmetry
operator. That is, using also that $\ph \mapsto I_\ph$ is a splitting
operator, we have the following.
\begin{theorem} \label{cons}
On a conformally flat manifold, let $P: \cV \to \cW$ be a conformally
invariant operator between irreducible tensor bundles $\cV$ and $\cW$,
and suppose that $\ph = \ph_r^{(a_1 \ldots a_p)_0} \in \cE^{(a_1
  \ldots a_p)_0}[2r]$, $r,p \geq 0$ is a solution of \nn{1BGG}. Then
with $S_\ph: \cV \to \cV $ and $S'_\ph: \cW \to \cW$ given by
\nn{can}, the pair $(S_\ph,S'_\ph)$ is symmetry of $P$.
Assuming $P$ is the GJMS operator $P_k$ then for $\ph\neq 0$ and
$r<k$, this is a non-trivial symmetry.  
\end{theorem}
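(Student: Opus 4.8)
The plan is to split the statement of Theorem~\ref{cons} into two parts: first, that $(S_\ph,S'_\ph)$ is a symmetry pair of $P$, and second, that for $\ph\neq 0$ and $r<k$ this symmetry is non-trivial. For the first part, essentially all the work is already done. Since $\ph$ is a solution of \nn{1BGG}, the discussion surrounding \nn{split} tells us that the prolonged tractor $I_\ph^{\form{A}_1 \ldots \form{A}_p B_1 B_1' \ldots B_r B_r'}$ is parallel for the tractor connection. Then Theorem~\ref{appmain} (applied with the $\bbD$'s and $\bbD^2$'s, viewing $\bbD^2$ as built from two $\bbD$'s up to the trace correction controlled by \nn{[D,X]}, or more directly by the display \nn{SbbS} and its surrounding paragraph) gives $P^\na \bbD_{\form{A}_1} \cdots \bbD_{\form{A}_p} \bbD^2_{B_1 B_1'} \cdots \bbD^2_{B_r B_r'} = \bbD_{\form{A}_1} \cdots \bbD^2_{B_r B_r'} P$; contracting both sides with the parallel tractor $I_\ph$ (which therefore passes through $P^\na$ freely, exactly as in the proof of Theorem~\ref{appmain}) yields $P S_\ph = S'_\ph P$ with $S'_\ph := I_\ph^{\cdots}\bbD_{\form{A}_1} \cdots \bbD^2_{B_r B_r'}$ acting now on $\cW$. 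That $S_\ph$ and $S'_\ph$ map the relevant bundles to themselves is immediate from the weight bookkeeping: each $\bbD_\form{A}$ and each $\bbD^2_{AB}$ is weight-preserving, per \nn{doubleD}.

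The second, genuinely substantive part is to show that for $\ph \neq 0$ and $r < k$ the pair $(S_\ph,S'_\ph)$ is \emph{not} a trivial symmetry, i.e.\ is not of the form $(TP_k, P_kT)$ for any differential operator $T$. The natural strategy is to compute the leading symbol of $S_\ph$. The claim (asserted already in Theorems~\ref{main} and~\ref{csymc}) is that $S_\ph$ has leading term $\ph_r^{a_1\ldots a_p}(\na_{a_1}\cdots\na_{a_p})\De^r$. So the plan is: (i) identify the top-order part of $I_\ph$ in a scale — this is the component $\Y^{\form{A}_1}{}_{a_1}\cdots \Y^{\form{A}_p}{}_{a_p} Y^{B_1}\cdots Y^{B_r'}\ph_r^{a_1\ldots a_p}$ appearing as the first term of \nn{split}, since $Q$ only adds lower-slot (hence, after contraction with the $\bbD$'s, lower-order) corrections; (ii) read off from \nn{doubleD} that $\bbD_\form{A} = 2\X_{\form{A}}^{\ a}\na_a + \text{lot}$ and $\bbD^2_{AB} = -X_{(A}D_{B)} + \text{lot} = X_A X_B \Delta + \text{lot}$ on densities (using $D_B = -X_B\Delta + \text{lot}$ from \nn{tractorD}); (iii) contract, using the tractor algebra relations \nn{trmet} and the relations $\X^\form{A}_{\ a}\Y_\form{A}^{\ c} = \tfrac12\de_a^c$ etc.\ together with $Y_AX^A = 1$, to find that the surviving top-order term is precisely $\ph_r^{a_1\ldots a_p}\na_{a_1}\cdots\na_{a_p}\Delta^r$, up to a nonzero constant which can be absorbed or tracked. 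This is a finite, if somewhat intricate, symbol computation.

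Once the leading symbol of $S_\ph$ is in hand, non-triviality follows by a symbol/order argument. A trivial symmetry $TP_k$ has leading symbol divisible (as a polynomial in the cotangent variable $\xi$) by the principal symbol $|\xi|^{2k}$ of $P_k$. But the leading symbol of $S_\ph$, namely $\ph_r^{a_1\ldots a_p}\xi_{a_1}\cdots\xi_{a_p}\,|\xi|^{2r}$, is divisible by $|\xi|^{2k}$ only if $r \geq k$ (the factor $\ph_r^{a_1\ldots a_p}\xi_{a_1}\cdots\xi_{a_p}$ is trace-free in its indices, hence not divisible by $|\xi|^2$, since $\ph \neq 0$ and a trace-free symmetric tensor contracted into $\xi^{\otimes p}$ is a harmonic polynomial). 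Since we assume $r < k$, no such $T$ exists, so $(S_\ph,S'_\ph)$ is non-trivial. The main obstacle here is the leading-symbol computation in step~(ii)--(iii): one must be careful that the curved Casimir corrections in $Q$, and the lower-order terms hidden in $\bbD$ and $\bbD^2$, really do only contribute strictly lower order, and that the combinatorial constant coming from the symmetrisations and the contractions $\X^\form{A}_{\ a}\Y_\form{A}^{\ c}$, $Y_A X^A$ is genuinely nonzero; this is where I expect the bulk of the careful bookkeeping to lie, though it is routine in character.
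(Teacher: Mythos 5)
Your proof is correct and follows essentially the same route as the paper: the symmetry property is an immediate consequence of Theorem \ref{appmain} (via the discussion around \nn{SbbS}) together with the parallelism of $I_\ph$, and the non-triviality rests on identifying the leading term of $S_\ph$ as $\ph_r^{a_1\ldots a_p}\na_{a_1}\cdots\na_{a_p}\De^r$, which the paper establishes through the homogeneity bookkeeping of Proposition \ref{can:prop}. Your closing harmonic-polynomial/divisibility-by-$|\xi|^{2k}$ argument is a clean, equivalent rephrasing of the paper's observation (stated after Proposition \ref{can:prop}, using parts (iii)--(iv)) that no term of $S_\ph$ carries $\De^{\bar r}$ with $\bar r\geq k$ as a right factor.
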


\begin{proof}
It remains to prove the last claim.  Note that acting on any density
bundle, $\ph$ is the leading symbol of the operator \nn{can}. This
follows from the construction of $S_\ph$ and is also shown by
Proposition \ref{can:prop} (which we will come to later).  Thus the
leading term does not have $\De^k$ as the right factor for $r<k$.
\end{proof}

Note that $S_\ph$ and $S'_\ph$ are not the same differential
operators. The point is that \nn{can} really defines a family of
differential operators parametrised by the space of domain bundles.

We shall henceforth only pursue the case that $P$ is a GJMS operator.
As mentioned in the proof of the theorem, $\ph$ is then the leading
symbol of the operator \nn{can}. Also note that in this case the use
of $\cD$ and $\cD^2$ rather than $\bbD$ and $\bbD^2$ (respectively) in
\nn{can} yields the same symmetry, as follows from  Proposition
\ref{fund=double}.

\noindent
{\bf Remark.}  Consider an operator $F: \cE[w] \to \cE[w]$, of 
order $\tilde{p} \geq 0$, on a smooth conformal manifold manifold
$(M,[g])$ and its symbol $\tilde{\ph}^{(a_1 \ldots a_{\tilde{p}})} \in
\cE^{(a_1 \ldots a_{\tilde{p}})}$.  Then, via the  conformal structure $[g]$
we may decompose  $\tilde{\ph}$ into
irreducibles. 
Each irreducible component $\ph$ of $\tilde{\ph}$ can be realised as
$\ph^{(a_1 \ldots a_p)_0} \in \cE^{(a_1 \ldots a_p)_0}[2r]$ where
$p=\tilde{p}-2r$.  Thus we have also the operator $S_\ph$, constructed
as above except that we here do not require $\ph$ to solve \nn{1BGG}.
 We may then
take the difference $F-S_\ph: \cE[w] \to \cE[w]$. Now the whole
procedure can be repeated for the operator $F-S_\ph$. It is clear that
after a finite number of steps we obtain the form $F = \sum_{\ph \in
  U} S_\ph$ for a (finite) index set $U \subseteq \N$. That is, given
an operator $F: \cE[w] \to \cE[w]$ on a smooth manifold $M$, any
 conformal structure on $M$ yields a decomposition of
$F$ as a sum of canonical operators $S_\ph$.  

In the other direction, the operators $S_\ph$ provide the conformally
invariant quantization introduced in \cite{DLOex}, in particular 
the special case \cite[3.1]{DLOex}. Also note the Section \ref{Ds} shows 
how to rewrite the general construction \cite{CSahs} using an affine 
connection.

\section{Classification of leading terms of symmetries}

According to the discussion following Theorem \ref{csymc}, the problem of
conformal symmetries for the GJMS operators (on locally conformally
flat manifolds) is reduced to the setting of Theorem \ref{main}. So
throughout this section we work on $\mathbb{E}^{s,s'}$ equipped with the
standard flat diagonal signature $(s,s')$ metric $g$ with $s+s'=:n\geq
3$.

\vspace{1ex}

All linear differential operators $L: \cE[w] \to \cE[w]$ may be
expressed as sums of the form
\begin{equation} \label{flatop}
  L=\sum_{p,r \geq 0} \ph_r^{a_1 \ldots a_p} (\na_{a_1} \cdots \na_{a_p}) \De^r,
  \quad \ph_r^{a_1 \ldots a_p} \in \cE^{(a_1 \ldots a_p)_0}[2r]=\ce^{(p)_0}_r; 
\end{equation}
We shall
describe the right-hand-side here as a {\em standard expression} for
$L$.  Moreover we shall typically use the notation $\ph_r^p (\odot^p
\na) \De^r$ as a shorthand for the operator $\ph_r^{a_1 \ldots a_p}
(\na_{a_1} \cdots \na_{a_p}) \De^r$ in the displayed sum (as the
details of the internal index contractions are not important for our
arguments).

We use the standard expressions as above to analyze the structure of
potential symmetries and their compositions with $\Delta^k$. In
particular we shall use the following properties/descriptions of a
given coefficient $\ph_r^p$. We shall write $o(\ph_r^p)=p+2r$ and term
this the \idx{formal order} of $\ph_r^p$ and $\ell(\ph_r^p) = p+r$
which will be termed \idx{level} of $\ph_r^p$.  (These reflect
properties of terms $\ph_r^{a_1 \ldots a_p} (\na_{a_1} \cdots
\na_{a_p}) \De^r$ and how they appear naturally in appropriate tractor
formulae. However these quantities are fully determined by the
coefficients $\ph_r^p$, so it is sufficient to consider formal order
and level of coefficients.)  We also say $\type{p}{r}$ is the
\idx{type} of $\ph_r^p$.  We shall write $o(R) = a$ and $\ell(R) = b$
if all terms of a differential operator $R: \cE[w] \to \cE[w]$ 
are of the formal order at most $a$,
respectively level at most $b$. Finally if $L$ is a symmetry of
$\Delta^k$, then we shall say $L$ is a {\em normal} symmetry (of
$\Delta^k$) if $r<k$ for all terms in the standard expression
\nn{flatop}. 
Modulo trivial symmetries, any symmetry of $\Delta^k$ may be represented
by a normal symmetry. (More generally, this holds for all operators on 
functions, cf.\ the remark following Proposition \ref{can:prop}.)

Further we shall need a suitable ordering of the terms in a standard
expression.  This will be defined via the coefficients as follows:
\begin{equation} \label{ord}
  \ph_r^p \lhd \ps_{r'}^{p'} \quad \text{iff} \quad
  \ell(\ph_r^p) < \ell(\ps_{r'}^{p'}) \ \ \text{or} \ \
  \big( \ell(\ph_r^p) = \ell(\ps_{r'}^{p'})\big) \wedge 
 \big( o(\ph_r^p) < o(\ps_{r'}^{p'}) \big). 
\end{equation}
Since the coefficient $\ph_r^p$ determines a corresponding term
in the standard expression completely, 
we shall use the ordering $\lhd$ for both
coefficients and terms of an operator \nn{flatop}.

In the following, we shall use the terminology the \idx{greatest term} (or 
coefficient) with respect to the ordering $\lhd$, the \idx{leading term}
(i.e.\ the term of the highest formal order $o$) and the
\idx{term of highest level}, which refers to the quantity $\ell$ defined 
above. We would like to emphasize that all these characteristics of terms are 
generally different.

First we shall study the canonical symmetries. Since these are
constructed using tractor operators we need a further weight type
measure as follows.  In the tractor formulae, we use strings of the
symbols $X$, $Y$, $Z$ and $\X$, $\Y$, $\Z$ and $\W$ from Section
\ref{tractorsect}.  We define the \idx{homogeneity} $\rmh(\om)$ of a
string $\om \in \{X,Y,Z,\X,\Y,\Z,\W \}$ by
\begin{gather} \label{h}
  \rmh(Y)=1, \rmh(Z)=0, \rmh(X)=-1, \rmh(\Y)=1, \rmh(\Z)=h(\W)=0, \rmh(\X)=-1 \\
  \text{and} \quad \rmh(\om_1\om_2) := \rmh(\om_1) + \rmh(\om_2) \notag
\end{gather}
where $\om_1\om_2$ means a concatenation of the strings
$\om_1$ and $\om_2$.

Now we are set to describe properties of the canonical symmetries
(and more generally operators of the form \nn{can}), as follows.
\begin{proposition} \label{can:prop}
Consider $\ph=\ph_r^p \in (\odot^p TM) \otimes \cE[2r]$ and the corresponding 
operators $S_\ph: \cE[w] \to \cE[w]$ and $S'_\ph: \cE[w'] \to \cE[w']$,
$w,w' \in \R$ given by \nn{can}. 
Then, in the standard expressions
for $S_\ph$ and $S'_{\ph}$, the following properties hold:

(i) $S_\ph$ and $S'_\ph$ have the same leading term $\ph$. 

(ii) $\ell(S'_\ph) = \ell(S_\ph) = r+p = \ell(\ph_r^p)$, that is
every term $\ps$ of $S_\ph$ or $S'_\ph$ satisfies $\ell(\ps) \leq p+r$.
Moreover, the greatest terms of $S_\ph$ and $S'_\ph$ have the coefficient
$\ph$. 

(iii) $o(S'_\ph) = o(S_\ph) = p+2r = o(\ph_r^p)$, that is
every term $\ps$ of $S_\ph$ or $S'_\ph$ satisfies $o(\ps) \leq p+2r$.
Moreover, the equality happens only for $\ps = \ph$. 

(iv) Every term $\ps$ of type $\type{\bar{p}}{\bar{r}}$ of 
$S_\ph$ or $S'_\ph$ satisfies $r \geq \bar{r}$. 
\end{proposition}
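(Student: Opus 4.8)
The plan is to prove Proposition \ref{can:prop} by carefully tracking how the tractor operators $\bbD_\form{A}$ and $\bbD^2_{BB'}$ act when written in a chosen (flat) scale, working out the ``standard expression'' of the result and reading off the four stated properties simultaneously. The starting point is the explicit form of the parallel tractor $I_\ph$: because $\ph$ solves \nn{1BGG}, the splitting operator \nn{split} shows that the \emph{top slot} of $I_\ph$ (the one picked out by contracting with the highest-weight projectors $\Y\cdots\Y\, Y\cdots Y$) is exactly $\ph_r^{a_1\ldots a_p}$, and all other slots are lower-order expressions built from $\na\ph$, $\De\ph$ and (on $\mathbb{E}^{s,s'}$) nothing else since the curvature vanishes. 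So I would first record a slot-by-slot decomposition of $I_\ph$ and assign to each slot its homogeneity $\rmh$ via \nn{h}; the top slot has the maximal homogeneity $p+2r$ and every lower slot has strictly smaller homogeneity.

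Next I would analyze the action of a single $\bbD_\form{A} = 2(w\W_\form{A} + \X_\form{A}^{\,a}\na_a)$ and of $\bbD^2_{BB'} = -(wh_{BB'} + X_{(B}D_{B')})$ on $\cE[w]$, expressed in the flat scale, keeping track of three bookkeeping quantities for each resulting term: its formal order $o$, its level $\ell$, and the homogeneity of the string of tractor projectors it carries. The key structural facts are: (a) $\bbD_\form{A}$ contracted into the $\Y^{\form{A}}{}_a$ component of $I_\ph$ produces the derivative term $\ph^{a_1\ldots a_p}\na_{a_1}\cdots$, i.e.\ it ``uses up'' one $\na$ per index and the matching of $\Y$ with $\X$ (via $\X^\form{A}_{\,a}\Y_\form{A}^{\,c} = \tfrac12\delta_a^c$) is precisely what pairs the top slot of $I_\ph$ with a $\na$; (b) $\bbD^2_{BB'}$, again through $D_{B'}$ which via \nn{tractorD} contains the $-X_A\Delta$ term, is what produces a factor of $\De$ together with $Y^{B}Y^{B'}$ pairing with $X$'s; (c) every time a projector contraction fails to hit the top slot, or every time the curvature-free commutators \nn{exform} generate extra terms, the homogeneity drops, and correspondingly either the formal order drops by $2$ (a $\na$ becomes an algebraic term) or both order and level drop. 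Assembling these, the monomial $\bbD_{\form{A}_1}\cdots\bbD_{\form{A}_p}\bbD^2_{B_1B_1'}\cdots\bbD^2_{B_rB_r'}$ contracted with $I_\ph$ has a unique ``leading'' contribution in which every $\bbD_\form{A_i}$ hits the top slot and contributes a clean $\na$, and every $\bbD^2_{B_jB_j'}$ contributes a clean $\De$; that contribution is $\ph_r^{a_1\ldots a_p}\na_{a_1}\cdots\na_{a_p}\De^r$, giving (i) and the ``leading term'' halves of (ii) and (iii). All other contributions have strictly smaller homogeneity, hence strictly smaller $o$, hence (iii), and since a drop in $o$ comes with $\ell$ non-increasing, also $\ell\le p+r$, giving (ii). For (iv) I would observe that a power $\De^{\bar r}$ in an output term can only be produced by the $\Delta$ inside at most $\bar r$ of the $D$'s appearing in the $\bbD^2$ factors, of which there are $r$, so $\bar r\le r$; the extra $\na$'s contributed by $\W$, $\X$, the Levi-Civita parts, and the $\na_a$ in $\bbD_\form{A}$ never create new Laplacians.

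For $S'_\ph$ I would use the intertwining identity $(-1)^k X_{A_1}\cdots X_{A_k}P_k = D_{A_1}\cdots D_{A_k}$ on densities of weight $-n/2+k$ together with Theorem \ref{appmain}/Corollary \ref{appfund}: commuting $\bbD$'s (resp.\ $\cD$'s) past $P_k$ changes the weight of the bundle the operators act on but not the \emph{formal} tractor expression, so $S'_\ph$ is obtained from the same parallel $I_\ph$ and the same monomial in $\bbD,\bbD^2$, merely acting on $\cE[-k-n/2]$ instead of $\cE[k-n/2]$. Hence the entire slot-counting argument applies verbatim with $w$ changed, and since none of the homogeneity/order/level bookkeeping above depended on the specific value of $w$ (the $w$-dependent terms $w\W_\form{A}$ and $wh_{BB'}$ are themselves of lower homogeneity than the leading derivative terms), all four properties hold identically for $S'_\ph$, and in particular $S_\ph$ and $S'_\ph$ share the same leading term.

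The main obstacle I anticipate is (iii)'s sharpness — proving that the formal order $p+2r$ is attained \emph{only} by $\ps = \ph$ — because this requires ruling out the possibility that two distinct non-leading contributions combine, via the commutation relations \nn{exform} and \nn{[D,X]} and the algebra of the $X,Y,Z,\X,\Y,\Z,\W$ contractions, to rebuild a top-order term. The clean way around this is to make the homogeneity count an exact invariant: show that in the flat scale each application of $\bbD_\form{A}$ or $\bbD^2_{BB'}$ either preserves the homogeneity of the projector string (and then contributes exactly one $\na$, resp.\ one $\De$) or strictly decreases it, and that the formal order of a term equals the homogeneity of (top slot of $I_\ph$) plus (number of $\na$'s and $\De$'s legitimately accumulated) $= p+2r$ precisely when no homogeneity was ever lost; since $I_\ph$ has a \emph{unique} slot of maximal homogeneity and each $\bbD$-type factor has a unique ``homogeneity-preserving'' branch, the maximal-order term is unique and equals $\ph$. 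Establishing this trichotomy-with-uniqueness cleanly, rather than expanding everything, is where I would concentrate the real work.
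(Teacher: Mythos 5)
Your plan has the right flavor — both you and the paper contract the slot decomposition \nn{sform} of $I_\ph$ against the slot decomposition \nn{dform} of the operator string, and use the homogeneity function $\rmh$ from \nn{h} as the bookkeeping device that relates the orders of the two sides via the pairing rule $\rmh(\om)+\rmh(\om')=0$. This correctly yields the order budget $o(F_\om)+o(G_{\om'})=p+2r$ and hence the bound $o(\ps)\leq p+2r$ in (iii) and the leading-term statement (i). But there is a genuine gap in the rest.

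The decisive step in the paper's proof of (ii), (iv) and the \emph{sharpness} of (iii) is the inequality $o(F_\om)\geq |p-\bar p|$ for a slot map $F_\om$ that takes $\ph$ of type $\type{p}{r}$ to a tensor of type $\type{\bar p}{\bar r}$; the point is that each application of $\na$ can change tensor rank by at most one, and (because $\ph$ is trace-free and the target coefficient is trace-free) this is the only mechanism available for changing the rank. Combined with $o(G_{\om'})=\bar p+2\bar r$ and the order budget $p+2r=o(F_\om)+o(G_{\om'})$, this inequality immediately gives $p+2r\geq |p-\bar p|+\bar p+2\bar r$, and a two-case analysis ($p\geq\bar p$ vs.\ $p\leq\bar p$) delivers $r\geq\bar r$, $\ell(\ps)\leq p+r$, and the fact that $o(\ps)=p+2r$ forces $\bar p=p,\bar r=r,o(F_\om)=0$. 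Your proposal never isolates this inequality. Your argument that $\ell\leq p+r$ because ``a drop in $o$ comes with $\ell$ non-increasing'' does not follow from the order budget alone: a priori a term could have $\bar p > p$, $\bar r < r$ with $\bar p + 2\bar r < p+2r$ but $\bar p + \bar r > p+r$, and only the rank-change bound excludes this.

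Your proposed mechanism for (iv) — that $\De^{\bar r}$ ``can only be produced by the $\De$ inside at most $\bar r$ of the $\bbD^2$ factors'' — is not correct as stated and is not the paper's argument. Each $\bbD_\form{A}=2(w\W_\form{A}+\X_\form{A}^a\na_a)$ contributes a Levi-Civita derivative $\na_a$ with a free tensor index, and when the operator string is contracted against $I_\ph$, two such indices can be paired by a metric coming from one of the lower (non-Cartan) slots of the tractor product, producing a $\De$ that has nothing to do with a $\bbD^2$. For example with $p=2,r=0$ the term $4\X_\form{A}^a\X_\form{B}^b\na_a\na_b$ in \nn{DD}, contracted into a slot of $I_\ph^{\form{AB}}$ of the form $\bg^{ab}(\cdots)$, yields a $\De$; ruling this out requires exactly the rank-change inequality above (it forces $o(F_\om)\geq 2$ for a $\bar p=0$ target, exceeding the budget), not a ``no new Laplacians'' heuristic. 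You yourself flag (iii)'s sharpness as the hard part and gesture at a ``homogeneity as an exact invariant'' argument, but without the $|p-\bar p|$ lower bound this does not close. The observation that $S'_\ph$ is the same tractor formula acting on a different weight, so it suffices to analyze $S_\ph$, is correct and matches the paper's opening reduction.
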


\noindent
{\bf Remark:} We shall actually use the Proposition only in the case
$[\na^{2r+1} \ph]|_\boxtimes =0$ i.e.\ when $(S_\ph^{},S'_\ph)$ is the symmetry 
pair. But note that part (iv) means, in particular, that 
any operator $L$ on functions satisfies, modulo trivial symmetries of
$\Delta^k$ that $r<k$ for all terms in the standard expression
\nn{flatop} of $L$.

\begin{proof}
First note that because $S_\ph$ and $S'_\ph$ are given by the same
operator \nn{can} acting on different density bundles, it turns out to
be sufficient to establish facts only for $S_\ph$.
From \nn{can} $S_\ph$ is 
 defined as the contraction of the parallel tractor 
$I_\ph^{\form{A}_1^{} \ldots \form{A}_p^{}B_1^{}B_1' \ldots B_r^{}B_r'}$,
corresponding to $\ph$,
with the operator 
$$ 
\widetilde{\bbD}_{\form{A}_1^{} \ldots \form{A}_p^{} B_1^{}B_1' \ldots
  B_r^{}B_r'} := \bbD_{\form{A}_1^{}} \ldots \bbD_{\form{A}_p^{}}
\bbD^2_{B_1^{}B_1'} \ldots \bbD^2_{B_r^{}B_r'}: \cE_{\bullet}[w]
\longrightarrow \cE_{\bullet\,\form{A}_1^{} \ldots \form{A}_p^{}
  B_1^{}B_1' \ldots B_r^{}B_r'}[w]. 
$$
We need some broad facts about 
the structure of the tractor formulae for $I_\ph$ and
$\widetilde{\bbD}$. When working in a metric scale and using
\nn{stsplit}, \nn{basictrf}, \nn{exform}, and \nn{doubleD} it follows that 
terms of
these are built respectively from tensor fields and tensor valued differential 
operators contracted into `projectors'
$$ 
\om \in \cB.
$$ Here $\cB$ is a set of fields taking values in the appropriate
tractor bundle tensor  product with an irreducible weighted
trace-free tensor bundle. Each element $\om\in \cB$ is an appropriate
projection (onto the irreducible part with respect to the tensor
indices) of a $p$-fold tensor product of elements from $\{ \X,\Y,\Z,\W
\} $ with a $2r$-fold tensor product of elements from $ \{ X,Y,Z \}$,
and we may take $\cB$ to be all such.
Similarly, the elements of $\cB$ can be considered as `injectors', i.e.\
a mapping going in the opposite direction.
For example, since $I_\ph$ is obtained from $\ph$ by a splitting
operator, it has the form
\begin{equation}\label{sform}
 I_\ph^{\form{A}_1^{} \ldots \form{A}_p^{}B_1^{}B_1' \ldots B_r^{}B_r'}
   = \sum_{\om \in \cB} 
   \om^{\form{A}_1^{} \ldots \form{A}_p^{}B_1^{}B_1' \ldots B_r^{}B_r'}  
   \cdot F_\om(\ph) 
\end{equation}
where, for each $\om \in \cB$, $F_{\om}(\ph)$ is the result of a
(weighted tensor valued) differential operator $F_\om$ acting on $\ph$
(a section of $(\odot^p TM) \otimes \cE[2r]$) and `$\cdot$' indicates
a contraction of tensor indices (which are suppressed); 
cf.\ \nn{Iph} below which shows $I_\ph$ for $\ph^a \in \cE^a$ explicitly.
Note also that
we sum over all strings in $\cB$ in the previous display, so many of
the $F_\om$ will be zero.  Similarly, it follows from the definition
of $\widetilde{\bbD}$ that
\begin{equation}\label{dform}
\widetilde{\bbD}_{\form{A}_1^{} \ldots \form{A}_p^{}B_1^{}B_1' \ldots B_r^{}B_r'}
= \sum_{\om \in \cB} 
\om_{\form{A}_1^{} \ldots \form{A}_p^{}B_1^{}B_1' \ldots B_r^{}B_r'} \cdot 
G_\om 
\end{equation}
where $G_\om$ is a (weighted tensor valued) differential operator
acting on densities and, again, `$\cdot$' denotes contraction of
(suppressed) tensor indices.  
See \nn{doubleD} and \nn{DD} for explicit examples.
Contracting the last two displays we
obtain the canonical symmetry $S_\ph$ as in \nn{can}.  Thus, using
\nn{trmet} and the surrounding observations, we have
$$ S_\ph = \sum_{\begin{subarray}{c} \om, \om' \in \cB,
    \\ \rmh(\om)+h(\om')=0 \end{subarray}} (F_\om(\ph)) \cdot
G_{\om'} $$ where `$\cdot$' indicates the contraction of suppressed
tensor indices.  Note pairs $(\om,\om')$ not satisfying
$\rmh(\om)+\rmh(\om')=0$ have dropped out of the sum by properties of
the tractor metric.  (Also note that the same property implies that if
the tensor indices of $F_\om$ and $G_{\om'}$ are not compatible for
complete contraction then the term $(F_\om(\ph)) \cdot G_{\om'}$ is
necessarily zero.)  

The differential order of $F_\om$ (and similarly $G_{\om'}$) is exactly 
the maximal
number of $\na$'s in the corresponding expression in the splitting
operator. (We consider formulae for splitting operators obtained using
the curved Casimir $\mathcal{C}=h^{AB}\cD^2_{AB}$ here.)
Denoting the differential order of $F_\om$ and
$G_{\om'}$ (in \nn{sform} and \nn{dform}) by, respectively, $o(F_\om)$
and $o(G_{\om'})$, we have 
$$ 
\rmh(\om) + o(F_\om) = p+2r \quad \text{and} \quad \rmh(\om') +
o(G_{\om'}) = 0, \quad \om, \om' \in \cB. 
$$ 
Here the first equality
follows from \nn{split} and the properties of splitting operators. The second
follows from the definition of $\widetilde{\bbD}$ (in
particular from the tractor expressions for $\bbD$ and $\bbD^2$ in
\nn{doubleD}), \nn{basictrf}, and \nn{exform}. 
Summing up the equalities in
the previous display we see that
\begin{equation} \label{oh}
  S_\ph = \sum_{\begin{subarray}{c} \om, \om' \in \cB, \\ 
      o(F_\om) + o(G_{\om'}) = p+2r \end{subarray}} 
  (F_\om(\ph)) \cdot G_{\om'}. 
\end{equation}
Note that all tractor indices have been eliminated, the formula
\nn{oh} for $S_\ph$ is expressed using tensor operators and
contractions only. 
Now consider a summand $(F_\om(\ph)) \cdot G_{\om'}$ of $S_\ph$ as in
\nn{oh}.  First $o(F_\om) + o(G_{\om'}) = p+2r$ implies $o(G_{\om'})
\leq p+2r$; moreover the equality can happen only if $F_\om=\id$ (up
to a non-zero scalar multiple), since \nn{split} is a differential
splitting operator. For the same reason this term does occur.
In the previous display the term with $F_\om=\id$ clearly recovers the
highest order term, i.e.\ the leading term. 
Therefore (i) follows.

Now by assumption $F_\om(\ph)$ is irreducible.
Since $S_\ph: \cE[w] \to \cE[w]$, it follows from \nn{flatop} that in
the standard expression $(F_\om(\ph)) \cdot G_{\om'} = \ga^{a_1 \ldots
  a_{\bar{p}}} \na_{a_1} \ldots \na_{a_{\bar{p}}} \De^{\bar{r}}$,
$\bar{p},\bar{r} \geq 0$
where $\ga$ is
symmetric and trace--free.  In fact, it follows from the form of
$I_\ph$ and $\wt{\bbD}$ that $F_\om(\ph) = \ga^{a_1 \ldots
  a_{\bar{p}}}$ and $G_{\om'} = \na_{a_1} \ldots \na_{a_{\bar{p}}}
\De^{\bar{r}}$.  We denote the type of $F_\om(\ph)$ by
$\type{\bar{p}}{\bar{r}}$. From this we get $o(G_{\om'}) = \bar{p} +
2\bar{r}$ and, since $F_\om$ takes $\ph$ of the type $\type{p}{r}$ to
a section of the type $\type{\bar{p}}{\bar{r}}$, we get $o(F_\om) \geq
|p-\bar{p}|$. (The point is that each application of the Levi-Civita
connection may increase or decrease the rank by 1, and this is the
only way the rank may change.) 
These properties hold for every
(irreducible) term $(F_\om(\ph)) \cdot G_{\om'}$ in \nn{oh}. Therefore
$$ p+2r = o(F_\om) + o(G_{\om'}) \geq |p-\bar{p}| + \bar{p} +
2\bar{r} $$ using \nn{oh}. We prove (ii), (iii) and (iv) separately in
cases $p \geq \bar{p}$ and $p \leq \bar{p}$.  If $p \geq \bar{p}$ then
the previous display says $p+2r \geq p + 2\bar{r}$ hence $r \geq
\bar{r}$. This implies $p+r \geq \bar{p} + \bar{r}$ and $p+2r \geq
\bar{p} + 2\bar{r}$.  If $p \leq \bar{p}$ then the previous display
means $2p+2r \geq 2\bar{p} + 2\bar{r}$ hence $p+r \geq \bar{p} +
\bar{r}$.  The latter inequality with $p \leq \bar{p}$ yields $r \geq
\bar{r}$ and so $p+2r \geq \bar{p} + 2\bar{r}$. This show (iv) and the
inequalities in (ii) and (iii). Now when equality holds in (iii) then
$p+2r = \bar{p}+ 2\bar{r}$. But then $p=\bar{p}$ from the previous
display thus also $r=\bar{r}$. This means $o(G_{\om'}) = p+2r$ and
$o(F_\om)=0$.  Hence $F_\om=id$, up to a multiple, and so if the term
is non-trivial we recover the leading term. It remains to discuss the
greatest term of $S_\ph$. But since we have already proved the
inequality in (ii), according to the ordering of \nn{ord} we need to
consider the order of terms of level $p+r$.  The
maximal order is then characterized by (iii).
\end{proof}

Note the part (iii) of the previous proposition means that the canonical 
symmetry $(S_\ph^{},S'_\ph)$, $\ph_r^p \in (\otimes^p TM) \otimes \cE[2r]$ 
is nontrivial for $P_k$, $k>r$. (The statement (iii) is actually stronger:
no term in $S_\ph$ has $\De^k$, $k>r$ as the right factor.)
\vspace{1ex}

Our strategy for classifying the  leading terms of symmetries uses
the ordering \nn{ord}. We shall start with the greatest term and study
what the symmetry condition imposes on its coefficient. We obtain
the following

\vspace{1ex}
\noindent
\idx{Claim:~} Let $\ph_i^j\in \ce^{(j)_0}_i$ is the greatest coefficient
of a symmetry $T$.  Then $[\na^{2i+1}\ph_i^j]_\boxtimes=0$.
\vspace{1ex} 

The claim forms the basis for an inductive procedure, as if
$[\na^{2i+1}\ph_i^j]_\boxtimes=0$ then the greatest term of
$T-S_{\ph_i^j}$ is strictly smaller (w.r.t.\ $\rhd$) than $\ph_i^j$, and
using Proposition \ref{can:prop}, we can replace $T$ by
$T-S_{\ph_0^p}$ and apply the previous claim again. 

The Claim is proved as Proposition \ref{biggest}, and then the detailed 
inductive procedure is  in the proof of  Theorem \ref{class}.
The proof of Proposition \ref{biggest} requires a detailed analysis of certain 
 terms. To demonstrate the technique, let us discuss an example first.
Assume that $(T,T')$ is a symmetry of $P_4 = \De^4$ of order $p$, i.e.\
$$ \De^4T=T'\De^4, \quad T = \sum_{2i+j \leq p, i<4} \ph_i^j (\odot^j
\na) \De^i $$ 
where we have displayed the standard expression of $T$.
Note we have not included terms with $i \geq
4$ as they may be eliminated by the addition of trivial symmetries of
$\De^4$. It is useful to write the terms of $T$ in a table as
follows: \\[1ex]
\noindent
\begin{tabular}{cc@{+}c@{+}c@{+}c@{+}}
order $p$: 
 & $\ph_0^{p} (\odot^p \na)$ 
 & $\ph_1^{p-2} (\odot^{p-2} \na) \De^1$
 & $\ph_2^{p-4} (\odot^{p-4} \na) \De^2$ 
 & $\ph_3^{p-6} (\odot^{p-6} \na) \De^3$ \\
order $p-1$: 
 & $\ph_0^{p-1} (\odot^{p-1} \na)$ 
 & $\ph_1^{p-3} (\odot^{p-3} \na) \De^1$
 & $\ph_2^{p-5} (\odot^{p-5} \na) \De^2$ 
 & $\ph_3^{p-7} (\odot^{p-7} \na) \De^3$ \\
order $p-2$: 
 & $\ph_0^{p-2} (\odot^{p-2} \na)$ 
 & $\ph_1^{p-4} (\odot^{p-4} \na) \De^1$
 & $\ph_2^{p-6} (\odot^{p-6} \na) \De^2$ 
 & $\ph_3^{p-8} (\odot^{p-8} \na) \De^3$ \\
order $p-3$: 
 & $\ph_0^{p-3} (\odot^{p-3} \na)$ 
 & $\ph_1^{p-5} (\odot^{p-5} \na) \De^1$
 & $\ph_2^{p-7} (\odot^{p-7} \na) \De^2$ 
 & $\ph_3^{p-9} (\odot^{p-9} \na) \De^3$ \\
order $p-4$: 
 & $\ph_0^{p-4} (\odot^{p-4} \na)$ 
 & $\ph_1^{p-6} (\odot^{p-6} \na) \De^1$
 & $\ph_2^{p-8} (\odot^{p-8} \na) \De^2$ 
 & $\ph_3^{p-10} (\odot^{p-10} \na) \De^3$ \\
$\vdots$ & $\vdots$ & $\vdots$ & $\vdots$ & $\vdots$
\end{tabular} \\[1ex]
Every line shows terms of the same formal order and moreover every antidiagonal
shows terms of the same level. So the ordering \nn{ord} in this case means
$$ \ph_0^{p} \rhd \ph_1^{p-2} \rhd \ph_0^{p-1} \rhd \ph_2^{p-4} \rhd 
   \ph_1^{p-3} \rhd \ph_0^{p-2} \rhd \ph_3^{p-6} \rhd \cdots $$
Observe the level $\ell(R)$ of an operator $R$ is 
increased by $k$ 
under composition with $\De^k$: 
$$ \ell(\De^k R) = \ell(R)+k. $$
Moreover, only terms of the highest level in $R$ can contribute
to terms of the highest level in $\De^kR$.

The greatest coefficient (w.r.t. $\rhd$) is $\ph_0^{p}$.  Recall $o(T)=p$
so we can assume $\ell(T) = p$ which
means $\ell(\De^4T) = p+4$. Now we consider terms of the level $p+4$
of $\De^4T$. First we commute all covariant derivatives $\nabla$ to
the right.  In fact, it is sufficient for our purpose to consider only
certain terms.  First we restrict to terms of the level $p+4$ without
a right factor $\De^4$
and then take the candidate for the greatest among these.  This is
$(\na^1\ph_0^p) (\odot^{p+1} \na) \De^3$.  Since this does not have a
right factor $\De^4$, it has to vanish since $T$ is a symmetry. Hence
$(\na^1\ph_0^p)_\boxtimes=0$, which means that $\ph_0^p$ is a
conformal Killing tensor.  Now we replace the symmetry $T$ by $T -
S_{\ph_0^p}$; this is also a symmetry.  The greatest coefficient of
$T-S_{\ph_0^p}$ is now strictly smaller (w.r.t.\ $\rhd$) than the
greatest coefficient of $T$.  (Here we have adjusted $S_{\ph_0^p}$ so the
leading term is precisely $\ph_0^{p} (\odot^p \na)$ rather than some
non-zero multiple. We will not comment further when this sort of
maneuver is used below.)  
 It is $\ph_1^{p-2}$ according to
\nn{ord}.  So now we may rename $ T-S_{\ph_0^p}$ as $T$ and continue
with the argument.

The next step is to assume $\ph_0^p=0$ and study differential
conditions imposed on $\ph_1^{p-2}$.  Here we skip this and several other steps
and we assume the greatest coefficient of $T$ is
$\ph_3^{p-6}$.  So suppose that $\ph_i^j=0$ for
$\ell(\ph_i^j) > p-3 = \ell(\ph_3^{p-6})$.  Then $\ell(T) = p-3$ and
so $\ell(\De^4T) = p+1$. We shall examine those terms of
the operator $\De^4T$ of the (highest) level $p+1$ and such that they
are without a right factor $\De^4$. To find these  it is sufficient to
consider
$$ \De^4 \left[ \ph_3^{p-6} (\odot^{p-6} \na) \De^3 + \ph_2^{p-5}
  (\odot^{p-5} \na) \De^2 + \ph_1^{p-4} (\odot^{p-4} \na) \De^1 +
  \ph_0^{p-3} (\odot^{p-3} \na) \right]. 
$$ We use the Leibniz rule to move $\De^4$ to the right in the
previous display.  We need to know the form of (level $p+1$) terms of
types $\type{p-2}{3}$, $\type{p-1}{2}$, $\type{p}{1}$ and
$\type{p+1}{0}$. The simplest case is the type $\type{p+1}{0}$, we
obtain only the term $2^4 (\na^4 \ph_0^{p-3}) \odot^{p+1} \na$. The operator 
$\odot^{p+1} \na$ does not arise in any other way, so the given term  must vanish through 
$\ph_0^{p-3}$ satisfying the obvious equation.  In the
case of the type $\type{p}{1}$ we similarly get the equation
$$ 2^4 (\na^4 \ph_1^{p-4}) (\odot^p \na)\De 
   + 2^3 \cdot 4 (\na^3 \ph_0^{p-3}) (\odot^{p} \na)\De =0.$$
Here $2^3 \cdot 4 = 2^3 \binom{4}{1} = 2^3 \binom{4}{3}$; generally
we put $C^s(4) = 2^s \binom{4}{s}$. 
The types $\type{p-2}{3}$ and
$\type{p-1}{2}$ yield two more equations which give conditions 
for the coefficients
$\ph_0^{p-3}$, $\ph_1^{p-4}$, $\ph_2^{p-5}$ and $\ph_3^{p-6}$.
Together these four equations yield the following differential equations 
for the coefficients $\ph_i^j$: \\[1ex]
\noindent
\begin{tabular}{lc@{+}c@{+}c@{+}c@{=0}}
 type $\type{p-2}{3}$: & $C^4(4) \na^4 \ph_3^{p-6}$
 & $C^3(4) \na^3 \ph_2^{p-5}$ & $C^2(4) \na^2 \ph_1^{p-4}$
 & $C^1(4) \na^1 \ph_0^{p-3}$ \\
 type $\type{p-1}{2}$: & 0 
 & $C^4(4) \na^4 \ph_2^{p-5}$ & $C^3(4) \na^3 \ph_1^{p-4}$
 & $C^2(4) \na^2 \ph_0^{p-3}$ \\
 type $\type{p}{1}$: & 0 & 0 & $C^4(4) \na^4 \ph_1^{p-4}$ 
 & $C^3(4) \na^3 \ph_0^{p-3}$ \\
 type $\type{p+1}{0}$: &0 & 0 & 0 & $C^4(4) \na^4 \ph_0^{p-3}$
\end{tabular} \\[1ex]
Here we implicitly consider the symmetric trace--free parts in every 
equation. Now applying $\na^3$ to the first equation, $\na^2$ to the second
and $\na$ to the third, and then taking the trace--free symmetric part 
in all cases,
we obtain a linear system in variables
$[\na^7 \ph_3^{p-6}]_\boxtimes$, $[\na^6 \ph_2^{p-5}]_\boxtimes$,
$[\na^5 \ph_1^{p-4}]_\boxtimes$ and $[\na^4 \ph_0^{p-3}]_\boxtimes$.
The matrix of (integer) coefficient is\\
$$ \left( \begin{array}{cccc}
   C^4(4) & C^3(4) & C^2(4) & C^1(4) \\
        0 & C^4(4) & C^3(4) & C^2(4) \\
        0 &      0 & C^4(4) & C^3(4) \\
        0 &      0 &      0 & C^4(4) 
\end{array} \right). $$
This is non-singular.
So  all the variables must vanish, and in particular
$[\na^7 \ph_3^{p-6}]_\boxtimes=0$, which is what we wanted to prove.

\vspace{1ex}

This was the case with greatest coefficient $\ph_3^{p-6}$.  It
suggests a route to solving the remaining cases, as they yield linear
systems in the same way. Actually it turns out that in each of the cases with
the greatest terms between $\ph_0^p$ and $\ph_3^{p-6}$ (which were
skipped above), the matrix of coefficients  includes 
a square ``upper right'' submatrix of the matrix above, i.e.\ a matrix
obtained by removing the first $q$ columns and the last $q$ rows for
some choice of $q$,  
that is sufficient if non-degenerate. 
That is it suffices to prove that
determinants of these matrices are nonzero. This necessitates
analysing the combinatorial coefficients $C^s(4)$ in more detail.

\vspace{1ex}

The general case is analogous; in the case of $\De^k$, $k \in \N$
we shall need the scalars
$$ C^s(k) := 2^s \binom{k}{s}, \quad 
   C^s(k):=0 \ \text{for} \ s>k $$
and matrices 
\begin{eqnarray} \label{mat} 
\begin{split}
  &\bC(k;d) \in \Mat_{k-d},\ 0 \leq d \leq k-1 \quad \text{where} \\
  &\bC(k;d)_{s,t} = C^{k-d+s-t}(k),\ 1 \leq s,t \leq k-d. 
\end{split}
\end{eqnarray}   
The matrices $\bC(k,0)$ are upper diagonal with $C^k(k)$ on
the diagonal; the matrix $\bC(4,0)$ appeared in the previous example.
In fact, $\bC(k,d)$ is obtained from $\bC(k,0)$ by removing $d$ first 
columns and $d$ last rows. Note also that considering (any) diagonal of 
$\bC(k,d)$, all the coefficients are the same. 

Clearly the $\bC(k,0)$ are regular.

\begin{theorem} \label{reg}
The matrices $\bC(k,d)$, $k \in\N$, $0 \leq d \leq k-1$ are regular.
\end{theorem}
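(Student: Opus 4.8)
The plan is to realise $\det\bC(k,d)$ as a minor of a banded Toeplitz matrix that is the weighted path--matrix of a planar network, and then read off its positivity --- hence non--vanishing --- from the Lindstr\"om--Gessel--Viennot lemma. Put $m:=k-d$, so $1\le m\le k$. Since $C^\ell(k)=2^\ell\binom{k}{\ell}$ is the coefficient of $x^\ell$ in $(1+2x)^k$, the matrix $\bC(k,d)$ is exactly the $m\times m$ submatrix $\bigl[M_{i_s j_t}\bigr]_{s,t=1}^{m}$ of the banded, unipotent lower triangular Toeplitz matrix $M$ with $M_{ij}:=[x^{i-j}](1+2x)^k=C^{i-j}(k)$, taken on the increasing index sets $i_s:=m+s$ and $j_t:=t$; indeed $M_{i_s j_t}=C^{(m+s)-t}(k)=\bC(k,d)_{s,t}$. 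Only indices $\le 2m$ occur, so $M$ may harmlessly be replaced by a large finite truncation.

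Next I would use the factorisation $(1+2x)^k=\prod_{r=1}^{k}(1+2x)$ to model $M$ by a planar acyclic network: arrange $k+1$ columns of vertices indexed by their \emph{height} (increasing downwards), and between consecutive columns join each vertex to the vertex of the same height, with edge weight $1$, and to the vertex one unit lower, with edge weight $2$. Then $M_{ij}$ is the total weight of directed paths from the height--$j$ vertex of the first column to the height--$i$ vertex of the last column, since such a path uses exactly $i-j$ of its $k$ steps downwards and hence has total weight $2^{i-j}\binom{k}{i-j}$ when $0\le i-j\le k$, none existing otherwise. With the sources $j_1<\dots<j_m$ on the left boundary and the sinks $i_1<\dots<i_m$ on the right, the configuration is planar, so the Lindstr\"om--Gessel--Viennot lemma gives
\[
\det\bC(k,d)=\sum_{(P_1,\dots,P_m)}\ \prod_{a=1}^{m} w(P_a),
\]
the sum over all families of pairwise vertex--disjoint directed paths with $P_a$ running from $j_a$ to $i_a$. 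Every edge weight is positive, so every summand is positive, and it remains only to produce one such disjoint family.

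That is immediate: $P_a$ must run from height $a$ to height $m+a$, i.e.\ take exactly $m=k-d$ downward steps among its $k$ steps, which is possible since $1\le m\le k$; fix any admissible path $P_1$ from height $1$ to height $m+1$ and let $P_a$ be its downward translate by $a-1$. Then $P_a$ runs from $a$ to $m+a$, and for $a<b$ the path $P_b$ lies everywhere exactly $b-a\ge 1$ units below $P_a$, so the family is vertex--disjoint. Hence $\det\bC(k,d)>0$; in particular $\bC(k,d)$ is regular.

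The one point demanding care is the Lindstr\"om--Gessel--Viennot step: one must check that no two vertex--disjoint paths can join the sources and sinks in a transposed order, so that only the identity permutation contributes and no cancellations occur. This holds because along any path the height is non--decreasing and changes by at most one per column, so two paths starting in one vertical order and ending in the opposite order must meet at a vertex; it is exactly here that one uses the full linear factorisation of $(1+2x)^k$, which renders every band entry $C^\ell(k)$, $0\le\ell\le k$, strictly positive. (Equivalently, pulling $2^{s-t}$ out of row $s$ and column $t$ gives $\det\bC(k,d)=2^{(k-d)^2}\det\bigl[\binom{k}{k-d+s-t}\bigr]_{s,t=1}^{m}$, and one may instead invoke total positivity of the binomial Toeplitz matrix --- but this reduces to the same lattice--path count.)
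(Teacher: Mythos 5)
Your argument is correct, and it is a genuinely different route from the one taken in the paper. The paper's proof (due to Kadourek) first strips off the powers of $2$ to reduce to the binomial Toeplitz minor $\wt{\bC}(k,d)$ and then carries out an explicit sequence of elementary column and row operations, repeatedly invoking Pascal's rule $\binom{q}{m}+\binom{q}{m+1}=\binom{q+1}{m+1}$, until an upper--triangular matrix with unit diagonal is reached; non-vanishing of the determinant is read off at the end. You instead realise $\bC(k,d)$ as an $m\times m$ contiguous minor of the lower-triangular Toeplitz matrix whose generating polynomial is $(1+2x)^k$, model it as the path matrix of a planar acyclic network obtained from the linear factorisation $(1+2x)^k=\prod_{r=1}^k(1+2x)$, and apply the Lindstr\"om--Gessel--Viennot lemma. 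You correctly verify the two points on which LGV hinges here: that only the identity permutation admits a vertex-disjoint routing (because the sources and sinks lie in the same vertical order and heights along paths are monotone with unit steps, so out-of-order routings must meet), and that at least one disjoint family exists (the vertical translates of a single path with $m=k-d\le k$ downward steps). Both proofs in fact give the sharper conclusion $\det\bC(k,d)>0$. What your approach buys is conceptual transparency and robustness: it exhibits $\bC(k,d)$ as a minor of a totally positive matrix, so the same argument handles any minor on row and column index sets compatible with at least one disjoint routing, and it avoids the somewhat delicate bookkeeping of the four intermediate matrices $D_1,\dots,D_4$. What the paper's approach buys is complete elementarity --- it needs only Pascal's rule and Gaussian elimination, with no appeal to the LGV lemma or to planarity.
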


The following proof of this Theorem  is due to 
J.\ Kadourek, of Masaryk University.
\begin{proof}

First observe that for $d=0$ the matrix $\bC$ is upper triangular with
nonzero entries on the diagonal. Thus it is regular so it is
sufficient to assume $1 \leq d \leq k-1$. Also to simplify the
notation we put $k_d := k-d$. Clearly $1 \leq k_d \leq k-1$.

It turns out to be useful 
to consider also the closely related matrix 
\begin{eqnarray} 
\begin{split}
  &\wt{\bC}(k;d) \in \Mat_{k_d},\ 0 \leq d \leq k-1 \quad \text{where} \\
  &\wt{\bC}(k;d)_{s,t} = \binom{k}{k_d+s-t},\ 1 \leq s,t \leq k_d, 
\end{split}
\end{eqnarray}   
where the latter is taken to be 0 if $s-t>d$. 
That is, the entries of $\bC$ and $\wt{\bC}$ differ by a power of $2$.
Now writing the determinant as a sum (over permutations of
$\{1,\ldots,k_d\})$) of products of entries of a matrix, one easily shows
that determinants of $\bC$ and $\wt{\bC}$ differ by a power of $2$.
That is, the matrix $\bC$ is regular if and only if $\wt{\bC}$ is regular.
We shall prove regularity for the latter. 

First recall the well-know relation
\begin{equation} \label{bineq}
  \binom{q}{m} + \binom{q}{m+1} = \binom{q+1}{m+1}, \quad q,m \geq 0.
\end{equation}
Henceforth we fix the values  $k$, $d$ from the allowed range.
The proof now consists of several series of row or column elementary
operations which change the determinant by a nonzero multiple.  During
certain stages of this process we shall obtain matrices $D_1, D_2,
D_3, D_4 \in \Mat_{k_d}$ whose determinants differ from each other
only by nonzero multiples.  The last of these, $D_4$ is upper triangular with
nonzero entries on the diagonal, and so this concludes the proof. 

The construction of $D_1$ from $\wt{\bC}$ consists of $k_d-1$ steps;
in each of these we undertake a series of elementary column
operations, as follows.  In the first step, we add the second column
to the first one, then the third column to the second and so on;
finally we add the last column to the last but one. In the second
step, we add the second column to the first one, then the third column
to the second and so on but finish by adding the $(k_d-1)$th column to
the $(k_d-2)$th column. Continuing in this way, in the last step
(i.e.\ the step number $k_d-1$) we add only the second column to the
first one. Note the determinants of $D_1$ and $\wt{\bC}$ differ by a
nonzero multiple.

Overall we obtain the matrix
\begin{equation} \label{D1}
  D_1(s,t) = \binom{k+k_d-t}{k_d+s-t}
  = \frac{(k+k_d-t)!}{(k_d+s-t)!(k-s)!};
\end{equation}
note $1 \leq k_d+s-t \leq k+k_d-t$.
The reasoning uses \nn{bineq} in every addition of two binomial numbers 
and goes as follows. Consider how the $(s,t)$-entry changes during
the procedure described in the previous paragraph.
First observe that after the $i$th step of 
elementary column operations, this entry has the form
$\binom{a_i}{k_d+s-t}$. That is, 
the ``denominator'' of the binomial number on the position $(s,t)$ does 
not change during this procedure. This follows from \nn{bineq}.
Second, the ``numerator'' of the binomial number on the $(s,t)$-position
increases by $1$ if we add the $(s,t+1)$--entry, see \nn{bineq}.
Thus the  ``numerator'' depends on the number of additions of the $(t+1)$st
column, as stated in \nn{D1}.

Now we modify the matrix $D_1$ as follows.  First we multiple the
$t$th column by $\frac{1}{(k+k_d-t)!}$, where we note that $k+k_d-t
\geq k \geq 1$. Then we multiply the $s$th row by $(k-s)!$ where $k-s
\geq 1$ because $s \leq k_d \leq k-1$.  We obtain the matrix $D_2$,
the determinants of $D_1$ and $D_2$ differ by a nonzero multiple. It
follows from the fractional form of entries of $D_1$ in \nn{D2} that
\begin{equation} \label{D2}
  D_2(s,t) = \frac{1}{(k_d+s-t)!}.
\end{equation}

We continue with the following modification of $D_2$. First we multiply
the $s$th row by $(k_d+s-1)! \geq 1$. Then we multiply the $t$th column by
$\frac{1}{(t-1)!}$, $t-1 \geq 0$ (thus $(t-1)! \geq 1$).  
The result is a matrix $D_3$, the determinants of $D_3$ and $D_2$ differ by 
nonzero multiple. It follows from \nn{D2} that 
\begin{equation} \label{D3}
  D_3(s,t) = \frac{(k_d+s-1)!}{(k_d+s-t)!(k-1)!} 
  = \binom{k_d+s-1}{k_d+s-t}.
\end{equation}

In the last stage we apply the following $k_d-1$ steps of elementary
row transformations to the matrix $D_3$. Observe that the first column
of $D_3$ has all its entries equal to $1$.  In the first step, we
subtract the $(k_d-1)$-st row from the $k_d$-th row, then we subtract
$(k_d-2)$-nd row from the $(k_d-1)$-st row and so on; finally we
subtract the first row from the second one. Thus the first column has
now $1$ as its top entry and $0$'s below this.  In the second step, we
subtract the $(k_d-1)$-st row from the $k_d$-th row, then we subtract
$(k_d-2)$-nd row from the $(k_d-1)$-st row and so on, as before except in
this step we finish at the point of subtracting the 2nd row from the
3rd row. Continuing in this way, in the last step we subtract only
$(k_d-1)$-st row from the $k_d$-th row.  We shall denote the resulting
matrix by $D_4$.

It turns out $D_4$ is upper triangular with all entries on the
diagonal equal to $1$. To show this note we use \nn{bineq} at every
step of the above procedure. In fact, the final form of $D_4$ can be
foreseen already from the first step, after which we obtain a matrix
that we shall denote $O \in \Mat_{k_d}$.  We already know the first column
of $O$ is $(1,0,\ldots,0)^T$. From this it follows that in the
second step we effectively work only with submatrix of $O$ with
entries $(s,t)$, $2 \leq s,t \leq k_d$.  Since
$$ O(s,t) = \binom{k_d+s-2}{k_d+s-t} 
   = D_3(s-1,t-1),
   \quad 2 \leq s,t \leq k_d $$
using \nn{bineq}, we see this submatrix of $O$ is exactly the submatrix
of $D_3$ without the last row and the last column. Applying the second
step to the displayed submatrix corresponds to applying the first step
to the corresponding submatrix of $D_3$ (the last row and column  
clearly have no influence on the previous ones). These observations yield 
an inductive procedure which demonstrates the claimed form of $D_4$.
\end{proof}

\begin{proposition} \label{biggest}
Let $(T,T')$ be a normal symmetry of $\De^k$ and suppose that, in a
standard expression for $T$, $\ph_r^p (\odot^p \na) \De^r$  is
the greatest non-zero term of $T$ with respect to $\rhd$. Then
$[\na^{2r+1} \ph_r^p]|_\boxtimes =0$.
\end{proposition}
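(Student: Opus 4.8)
The plan is to carry out, for general $k$ and an arbitrary $\rhd$-greatest term, the argument illustrated in the $\De^4$ example preceding the statement, so that everything is reduced to the regularity of the matrices $\bC(k;d)$ of Theorem \ref{reg}.

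Write $L:=\ell(\ph_r^p)=p+r$. Since $\ph_r^p$ is the $\rhd$-greatest nonzero term of $T$, $L$ is the top level occurring in $T$ and, because at level $L$ the formal order of a term equals $L$ plus its $\De$-power, no level-$L$ term can have $\De$-power exceeding $r$; hence the level-$L$ part of $T$ is $\sum_{r'=0}^{r}\ph_{r'}^{L-r'}(\odot^{L-r'}\na)\De^{r'}$, with $\ph_r^{L-r}=\ph_r^p\neq0$ (the other $\ph_{r'}^{L-r'}$ possibly vanishing). By the Leibniz rule for $\De^k=(\na^a\na_a)^k$, together with the two facts recorded before the statement ($\ell(\De^kR)=\ell(R)+k$, and only the top-level part of $R$ feeds the top-level part of $\De^kR$), the level-$(L+k)$ part of $\De^kT$ is produced by the level-$L$ part of $T$ alone. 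On flat space there is no curvature to commute past, and any trace arising when $\na_{(a_1}\cdots\na_{a_s}\ph_{b_1\cdots b_{p'})}$ is put into standard form yields a factor $\bg$ that contracts into the $\na$-string to form an extra $\De$, thereby lowering the level by one; so the level-$(L+k)$ part of $\De^kT$ equals
\[
\sum_{r'=0}^{r}\sum_{s=0}^{k} C^s(k)\,[\na^s\ph_{r'}^{L-r'}]|_\boxtimes\,(\odot^{L-r'+s}\na)\,\De^{r'+k-s},\qquad C^s(k)=2^s\binom{k}{s},
\]
the factor $C^s(k)$ appearing because each of the $s$ ``split'' Laplacians $\na^a\na_a$ may place its two derivatives in either order ($2$ choices) onto $\ph$ and onto the operator string. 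Collecting by type $\type{\bar p}{\bar r}$ (so $\bar p=L+k-\bar r$): since $T$ is a normal symmetry every term of $T'\De^k$ is divisible on the right by $\De^k$, while the standard expression is unique, so the coefficient of each type $\type{\bar p}{\bar r}$ with $\bar r<k$ must vanish. For $\bar r=0,1,\ldots,k-1$ this is the identity
\[
E_{\bar r}:\qquad \sum_{r'=0}^{r} C^{r'+k-\bar r}(k)\,[\na^{r'+k-\bar r}\ph_{r'}^{L-r'}]|_\boxtimes=0
\]
(trace-free symmetric parts understood; summands with $r'+k-\bar r>k$ drop out because $C^s(k)=0$ for $s>k$).

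Next I would use the $r+1$ identities $E_{\bar r}$ for $\bar r=k-r-1,k-r,\ldots,k-1$, all of which lie in the range $\{0,\ldots,k-1\}$ since $0\le r\le k-1$ ($T$ being normal). Applying $\na^{c(\bar r)}$ with $c(\bar r):=r+1-k+\bar r\ge0$ to $E_{\bar r}$ and passing to trace-free symmetric parts, the $r'$-summand becomes $C^{r'+k-\bar r}(k)\,X_{r'}$, where $X_{r'}:=[\na^{r+1+r'}\ph_{r'}^{L-r'}]|_\boxtimes$ --- the number $r+1+r'$ of derivatives now being independent of $\bar r$. Listing the variables in the order $X_r,X_{r-1},\ldots,X_0$ and the equations in the order $E_{k-1},E_{k-2},\ldots,E_{k-r-1}$, we obtain a homogeneous linear system whose scalar coefficient matrix has $(a,i)$-entry $C^{(r+1)+(a-i)}(k)$; by \nn{mat} this matrix is precisely $\bC(k;k-r-1)$, which is regular by Theorem \ref{reg} (note $0\le k-r-1\le k-1$). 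Therefore all $X_{r'}$ vanish; in particular $X_r=[\na^{2r+1}\ph_r^p]|_\boxtimes=0$, which is the assertion.

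The delicate step, and the main obstacle, is the combinatorial bookkeeping in the second paragraph: verifying that in the level-$(L+k)$ part of $\De^kT$ the coefficient in front of $[\na^s\ph_{r'}^{L-r'}]|_\boxtimes(\odot^{L-r'+s}\na)\De^{r'+k-s}$ is exactly $C^s(k)=2^s\binom{k}{s}$ (every contribution in which a whole Laplacian hits $\ph$, and every trace contribution, strictly lowering the level), and then keeping the index ranges straight so that the final system is identified with $\bC(k;k-r-1)$. With that done, the regularity of $\bC(k;d)$ --- already proved in Theorem \ref{reg} --- closes the argument.
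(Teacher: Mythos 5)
Your proof is correct and follows essentially the same route as the paper's: apply $\De^k$ to the top-level part of $T$, extract the $r+1$ conditions imposed by the types $\type{\bar p}{\bar r}$ with $\bar r\in\{k-r-1,\ldots,k-1\}$, differentiate to make the number of derivatives on each $\ph_{r'}$ uniform, and identify the resulting coefficient matrix with $\bC(k;k-r-1)$ so that Theorem \ref{reg} finishes the job. Your indexing ($\bar r = k-q-1$, $r' = r-q'$ in the paper's notation) matches the paper's, and your justification of the coefficients $C^s(k)$ via the split/pass-through Leibniz count is the same ``counting argument'' the paper invokes.
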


\begin{proof}
The ordering $\lhd$ can be equivalently described as
$\ph_i^j \lhd \ph_{i'}^{j'}$ if and only if either
$i+j < i'+j'$ or $i+j = i'+j'$ and $i<i'$. Thus
$$ \De^kT=T'\De^k, \quad 
   T  = \ph_r^p (\odot^p \na) \De^r+ \sum_{
   \substack{i<k\\
i+j < r+p\ \text{or} \\ (i+j = r+p)~ \wedge ~(i<r)}} 
   \ph_i^j (\odot^j \na) \De^i. $$
Note $\ph_r^p$ might not be a leading term of $T$.

Note, $\ell(T) = p+r$ and $\ell(\De^kT) = p+r+k$. 
We shall discuss the terms of the highest level in $\De^k T$. For this
it is sufficient to apply $\De^k$ only to level $p+r$ terms of $T$.
That is, we need to understand the right hand side of 
\begin{eqnarray*} 
  &\De^k \left[ \ph_r^p (\odot^p \na) \De^r +
   \ph_{r-1}^{p+1} (\odot^{p+1} \na) \De^{r-1} + \ldots +
   \ph_0^{p+r} (\odot^{p+r} \na) \right]  - F\De^k \\
  &=\ps_{k-1}^{p+r+1} (\odot^{p+r+1} \na) \De^{k-1} +
   \ps_{k-2}^{p+r+2} (\odot^{p+r+2} \na) \De^{k-2} + \ldots +
   \ps_0^{p+r+k} (\odot^{p+r+k} \na) + \text{llt}
\end{eqnarray*}
where $F$ is a differential operator.  Here ``llt''
denotes terms of the level at most $p+r+k-1$ (with powers of $\Delta$
strictly less than $k$) and $\ps_i^j$ is of type $\type{j}{i}$. Since $i<k$
for every $\ps_i^j$ on the right had side, imposing the symmetry
condition, each of these terms has to vanish.  This yields $k$
differential conditions
$$ \ps_{k-1}^{p+r+1} (\odot^{p+r+1} \na) \De^{k-1} =0,
   \ps_{k-2}^{p+r+2} (\odot^{p+r+2} \na) \De^{k-2} =0, \ldots,
   \ps_0^{p+r+k} (\odot^{p+r+k} \na) =0. $$
Thus $\ps_{k-q-1}^{p+r+q+1}=0$ for $q \in \{0,\ldots,k-1\}$.  For our
purposes it turns out to be sufficient to take $q$ in the (in general
smaller) range $ \{0,\ldots,r\}$. So we have $r+1$ differential
conditions. Now fix such a $q$; we have more explicitly
$$ \ps_{k-q-1}^{p+r+q+1} = \bigl[ a_{q,0} \na^{r+q+1} \ph_r^p +
  a_{q,1} \na^{r+q} \ph_{r-1}^{p+1} + \ldots + a_{q,r} \na^{q+1}
  \ph_0^{p+r} \bigr]|_\boxtimes $$ for some integer coefficients
$a_{q,q'}$, $q' \in \{0,\ldots,r\}$.  Via the Leibniz rule and a
counting argument, it is straightforward to verify that $a_{q,q'} =
C^{r+q-q'+1}(k)$.  
Recall $\ps_{k-q-1}^{p+r+q+1}=0$ hence the right hand side of the
previous display vanishes. Finally, let us apply $\na^{r-q}$ to both
sides of the previous display. Projecting to the Cartan component, we
obtain
$$ \bigl[ 
   C^{r+q+1}(k) (\na^{2r+1} \ph_r^p) 
   + C^{r+q}(k) (\na^{2r} \ph_{r-1}^{p+1}) + \ldots 
   + C^{q+1}(k) (\na^{r+1} \ph_0^{p+r}) \bigr]|_\boxtimes =0.$$
This is a linear equation in the $r+1$ variables 
$(\na^{2r+1} \ph_r^p)|_\boxtimes$,
$(\na^{2r} \ph_{r-1}^{p+1})|_\boxtimes$, $\ldots$, 
$(\na^{r+1} \ph_0^{p+r})|_\boxtimes$.
These variables obviously do not depend on $q$. That is 
for every $q \in \{0,\ldots,r\}$ we obtain one equation in these 
variables. Overall we have a system of $r+1$ linear equations in $r+1$ 
variables
$(\na^{2r+1} \ph_r^p)|_\boxtimes$,
$(\na^{2r} \ph_{r-1}^{p+1})|_\boxtimes$, $\ldots$, 
$(\na^{r+1} \ph_0^{p+r})|_\boxtimes$. The integer coefficients are
$a_{q,q'} = C^{r+q-q'+1}(k) = C^{(r+1)+(q+1)-(q'+1)}(k)$,
$q,q' \in \{0,\ldots,r\}$ thus the $(r+1) \times (r+1)$ matrix of integer 
coefficients is exactly $\bC(k,d)$ for $d = k-r-1$ from \nn{mat}. 
(Note $r<k$ hence $d \in \{0,\ldots,k-1\}$.) 
But matrices $\bC(k,d)$ are regular according to Theorem \ref{reg}. 
Therefore this linear system has only the zero solution, i.e.\
$$ (\na^{2r+1} \ph_r^p=0)|_\boxtimes=0, \na^{2r}
(\ph_{r-1}^{p+1})|_\boxtimes=0, \ldots, (\na^{r+1}
\ph_0^{p+r})|_\boxtimes=0. $$ In particular $(\na^{2r+1}
\ph_r^p)|_\boxtimes=0$, which is what we wanted to prove.
\end{proof}

Finally we have the key theorem of this section. By an obvious
induction this establishes the second part of Theorem
\ref{main}. 
\begin{theorem} \label{class}
Let $(S,S')$ be a normal symmetry of $\De^k$ and suppose that, in a
standard expression for $S$, $\ph_r^p (\odot^p \na) \De^r$, $r<k$ is a
leading term. Then $[\na^{2r+1} \ph_r^p]|_\boxtimes =0$.
\end{theorem}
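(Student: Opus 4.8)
The plan is to bootstrap Proposition \ref{biggest}, which controls only the $\rhd$-greatest term, up to an arbitrary leading term by the reduction indicated after the Claim: repeatedly strip off canonical symmetries. Write $P=o(S)$ and fix the given leading term $\ph_r^p(\odot^p\na)\De^r$, of type $\type{p}{r}$ with $p+2r=P$, $r<k$, and $\ph_r^p\neq 0$. I would run a finite sequence of normal symmetries $S=S^{(0)},S^{(1)},\dots$ of $\De^k$, each satisfying $o(S^{(m)})\le P$ and each still containing the term $\ph_r^p(\odot^p\na)\De^r$ with \emph{unchanged} coefficient, and show that the $\rhd$-greatest term of $S^{(m)}$ strictly decreases --- in the order $\rhd$, which is total on the finite set of types of formal order $\le P$ since two types sharing both level and formal order coincide --- until it becomes $\ph_r^p(\odot^p\na)\De^r$ itself, at which point Proposition \ref{biggest} applied to $S^{(m)}$ gives the conclusion.

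The step from $S^{(m)}$ to $S^{(m+1)}$ goes as follows. If the $\rhd$-greatest term of $S^{(m)}$ already has type $\type{p}{r}$ we stop. Otherwise this greatest term is some $\psi=\ph_{r'}^{p'}(\odot^{p'}\na)\De^{r'}$ of type $\rhd\type{p}{r}$; Proposition \ref{biggest} applied to $S^{(m)}$ yields $[\na^{2r'+1}\ph_{r'}^{p'}]|_\boxtimes=0$, so the canonical symmetry $S_\psi$ of Definition \ref{cansym} (with leading coefficient normalised to $\ph_{r'}^{p'}$, cf.\ Proposition \ref{can:prop}(i)) is defined, and I set $S^{(m+1)}:=S^{(m)}-S_\psi$. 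This is a symmetry of $\De^k$ by Theorem \ref{cons}; it is normal because by Proposition \ref{can:prop}(iv) every term of $S_\psi$ has $\De$-exponent at most $r'<k$; and it has $o(S^{(m+1)})\le P$ because by Proposition \ref{can:prop}(iii) $o(S_\psi)=o(\psi)\le o(S^{(m)})\le P$. Its coefficient of type $\type{p}{r}$ is untouched: a term of $S_\psi$ of type $\type{p}{r}$ other than the leading one would, by Proposition \ref{can:prop}(iii), have formal order strictly less than $o(\psi)\le P$, contradicting that type $\type{p}{r}$ has formal order $p+2r=P$, while the leading term $\psi$ of $S_\psi$ has type $\rhd\type{p}{r}$, hence $\neq\type{p}{r}$; so $S_\psi$ has no term of type $\type{p}{r}$ at all. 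Finally the $\rhd$-greatest term strictly drops: $\psi$ cancels against the $\rhd$-greatest term of $S_\psi$ (Proposition \ref{can:prop}(ii)), and every other term of $S^{(m)}$ and of $S_\psi$ is strictly $\lhd\psi$, so every type occurring in $S^{(m+1)}$ is strictly $\lhd$ that of $\psi$. Since $\ph_r^p$ is never removed, the $\rhd$-greatest type of $S^{(m)}$ stays $\type{p}{r}$ or larger; being a strictly decreasing chain in a finite totally ordered set it reaches $\type{p}{r}$ after finitely many steps, and then Proposition \ref{biggest} gives $[\na^{2r+1}\ph_r^p]|_\boxtimes=0$, as required.

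The one genuinely delicate point is the invariant that peeling off $S_\psi$ never disturbs the coefficient $\ph_r^p$ we are tracking; this is exactly where the hypothesis that $\ph_r^p(\odot^p\na)\De^r$ is a \emph{leading} (maximal formal order) term enters, via the sharp form of Proposition \ref{can:prop}(iii) that $S_\psi$ has a \emph{unique} term of top formal order. Everything that actually produces the generalised conformal Killing equation --- the Leibniz-rule bookkeeping in $\De^k S^{(m)}$ and the regularity of the matrices $\bC(k,d)$ --- is already encapsulated in Proposition \ref{biggest}; beyond this tracking invariant, the present argument is purely combinatorial bookkeeping with the ordering $\rhd$ and the structural facts about $S_\ph$ recorded in Proposition \ref{can:prop}.
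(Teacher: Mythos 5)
Your proof is correct and follows essentially the same strategy as the paper: apply Proposition \ref{biggest} to the $\lhd$-greatest term, subtract off the corresponding canonical symmetry, and iterate, using Proposition \ref{can:prop}(ii)--(iv) to ensure that the process is finite, preserves normality, and never disturbs the tracked leading coefficient $\ph_r^p$. The paper's proof is terser and leaves the invariant about $\ph_r^p$ implicit, whereas you spell it out; the substance is the same.
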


\noindent This establishes the second part of Theorem \ref{main}.
Note that using the conformal metric, we can view all $p+2r+1$ abstract
indices of $\na^{2r+1} \ph_r^p$ as contravariant. Then the projection
to the Cartan component in $[\na^{2r+1} \ph_r^p]|_\boxtimes =0$ simply
means taking the symmetric trace-free part.

\begin{proof}
Consider the coefficients of the maximal level $\ell(S)$ of 
$S$; among them, denote by $\ps_i^j$ the term of the highest order.
In the other words, $\ps_i^j$ is the greatest coefficient in $S$ w.r.t.\ $\lhd$.
Now $[\na^{2i+1} \ps_i^j]|_\boxtimes =0$ according to Proposition
\ref{biggest} hence $\ps_i^j$ yields the corresponding canonical symmetry 
$(S_\ps^{}, S'_{\ps})$ of $\De^k$. Therefore 
$(S-S_\ps^{},S'-S'_{\ps})$ is also a symmetry of $\De^k$. 

First observe using Proposition \ref{can:prop} (iii) that 
the leading terms of $S$ and $S-S_\ps$ can differ only if 
$\ps_i^j (\odot^j \na) \De^i$ is a leading term of $S$. But in that
case we have proved the theorem for $\ps_i^j (\odot^j \na) \De^i$. 
Therefore, it is sufficient to prove the theorem for
$S-S_\ps$. So we can take $S:=S-S_\ps$ and continue inductively.

Proposition \ref{can:prop} (ii) guarantees that
the greatest term of $S:=S-S_\ps$ is smaller than the greatest term
of $S$. Hence this induction w.r.t.\ $\lhd$ is finite.
\end{proof}

\section{Algebra of symmetries}

Here we shall prove Theorem \ref{algmain}. Recall that the finite
dimensional space of solutions of (\ref{1BGG}) 
may be realised as a standard
linear ``matrix'' representation of $\frak{g} = \frak{so}_{s+1,s'+1}$
via the map from solutions to parallel tractors $\ph \mapsto I_\ph$.
In the case of conformal Killing vectors (i.e.\ (\ref{1BGG}) with
$p=1$, $r=0$) the range space is $\frak{g}$, on which $\frak{g}$ acts by the
adjoint representation.  Then the identification of $\frak{g}$ with
differential symmetries is given by the mapping 
$\frak{g} \ni I_\ph
\mapsto S_\ph = I_\ph^\form{A} \bbD_\form{A}$, as a special case of
\nn{can}. 
The mapping $S_\ph = I_\ph^\form{A} \bbD_\form{A}$  extends to
\begin{equation} \label{key}
\frak{g} \otimes \frak{g} \otimes \cdots \frak{g} \ni
I_{\ph_1} \otimes \cdots \otimes I_{\ph_m} \mapsto
S_{\ph_1} \cdots S_{\ph_m}, \quad m \geq 1 ~,
\end{equation}
and hence to the full tensor algebra $\bigotimes \frak{g}$ by linearity.

The first step in the proof of Theorem \ref{algmain} is to express the
composition $S_{\ph} S_{\bar{\ph}}$ for $I_{\ph},I_{\bar{\ph}} \in
\frak{g}$ in terms of canonical symmetries. This is done \cite[Theorem
  5.1]{EaLeLap2} and necessarily our results must agree with those from
their construction (as uniqueness of the low order symmetries involved
is easily verified).  We present the details here to keep this text
self--contained and also because we derive the formulae for all
conformally flat manifolds.

Putting $I:= I_\ph, \bar{I} := I_{\bar{\ph}}$ to simplify the notation,
one has 
\begin{equation} \label{SS}
S_\ph S_{\bar{\ph}} = 
I^\form{A} \bbD_\form{A} \bar{I}^\form{B} \bbD_\form{B} =
I^\form{A} \bar{I}^\form{B} \bbD_\form{A} \bbD_\form{B}.
\end{equation}
on $\cE[w]$, since $I$ is parallel.
This gives an explicit and key link between the algebraic structure of
symmetries $\cA_k$ and operations on the tensor algebra $\bigotimes
\frak{g}$.  We shall consider the displayed operator acting on
$\cE[w]$ for all $w \in \R$ at this stage.

We need to decompose 
$\bbD_\form{A} \bbD_\form{B}$ into irreducible components. 
Using the definition of $\bbD_\form{A}$, a direct computation shows that
\begin{eqnarray} \label{DD}
\begin{split} 
\bbD_\form{A} \bbD_\form{B} f =
& 4w^2\W_\form{A}\W_\form{B} f 
- 4w\X_\form{A}^{\,a}\Y_\form{B}^{\,b} \bg_{ab}f \\
& +4(w-1)\X_\form{A}^{\,a}\W_\form{B}^{} \na_a f 
+ 4w \W_\form{A}^{}\X_\form{B}^{\,b} \na_b f 
+ 4 \X_\form{A}^{\,a} \Z_\form{B}^{\,\form{b}} \bg_{ab^0} \na_{b^1} f \\ 
& + 4\X_\form{A}^{\,a} \X_\form{B}^{\,b} (\na_a \na_b + wP_{ab}) f.
\end{split}
\end{eqnarray}
From this one easily verifies that
\begin{eqnarray} \label{DDdec}
\begin{split}
\frac{1}{2} (\bbD_\form{A} \bbD_\form{B} 
+ \bbD_\form{B} \bbD_\form{A}) &=&&
\frac{1}{2} (\bbD_\form{A} \bbD_\form{B} 
+ \bbD_\form{B} \bbD_\form{A})|_\boxtimes
+\frac{4}{n}h_{A^0B^0} \bbD^2_{(A^1B^1)_0} \\
&&&+\frac{2}{(n+1)(n+2)} h_{A^0B^0}h_{A^1B^1} 
\bbD^\form{A} \bbD_\form{A}, \\
\frac{1}{2} (\bbD_\form{A} \bbD_\form{B} 
- \bbD_\form{B} \bbD_\form{A}) &=&& 
3 h_{A^0[A^1}\bbD_\form{B]} = -2 h_{A^0B^0} \bbD_{A^1B^1}.
\end{split}
\end{eqnarray}
Hence we need the irreducible components $\myng{2,2}_0$, $\myng{2}_0$,
$\myng{1,1}$ and $\R$ of
$I^\form{A}\bar{I}^\form{B}$, cf.\ \nn{gg}.
Explicitly, we put
\begin{eqnarray} \label{IIirr}
\begin{split}
& \langle I,\bar{I} \rangle := -4n I^\form{A} \bar{I}_\form{A} \in \R, \\
& [I,\bar{I}]^\form{A} := 4I^{A^0P} \bar{I}_p{}^{A^1} \in \myng{1,1}, \\
& (I \bu \bar{I})^{BB'} := \frac{4}{n} I^{P(B} \bar{I}_P{}^{B')_0}
\in \myng{2}_0
\end{split}
\end{eqnarray}
and we denote by $(I \boxtimes \bar{I})^\form{AB}$ the trace--free
part of the Young projection $\myng{2,2}$ applied to $I^\form{A}
\bar{I}^\form{B}$. Using this notation, the projection and
decomposition of $I^\form{A} \otimes \bar{I}^\form{B}$ into
its irreducible components in $\myng{2,2}$, $\bbR$, $\myng{1,1}$ and
$\myng{2}_0$ is given by
\begin{eqnarray} \label{IIdecomp}
\begin{split}
\myng{1,1} \otimes \myng{1,1} \ni I^\form{A} \otimes \bar{I}^\form{B} \mapsto
&(I \boxtimes \bar{I})^\form{AB} 
- \frac{1}{2n(n+1)(n+2)} h^{A^0B^0}h^{A^1B^1} \langle I,\bar{I} \rangle \\
& + \frac{1}{n} h^{A^0B^0} [I,\bar{I}]^{A^1B^1}
+ h^{A^0B^0} (I \bullet \bar{I})^{A^1B^1}.
\end{split}
\end{eqnarray}

Using the computation above, we easily recover \cite[Theorem 5.1]{EaLeLap2}:

\begin{theorem} \label{dec2can}
Let $\ph^a, \bar{\ph}^a \in \cE^a$ be conformal Killing fields corresponding
to $I^\form{A} := I_\ph^\form{A}$ and 
$\bar{I}^\form{A} := I_{\bar{\ph}}^\form{A}$ in 
$\frak{g} = \frak{so}_{s+1,s'+1}$.
Then
$$
S_{\ph} S_{\bar{\ph}} f = 
(I \boxtimes \bar{I})^\form{AB} \bbD_\form{A}\bbD_\form{B} f
+ (I \bu \bar{I})^{BB'} \bbD^2_{BB'}f 
+ \frac{1}{2} [I,\bar{I}]^\form{A} \bbD_\form{A} f
+ \frac{w(n+w)}{n(n+1)(n+2)} \langle I,\bar{I} \rangle f
$$
for $f \in \cE[w]$, cf. \nn{ggcomp}. 
The four summands on the right hand side are canonical 
symmetries, explicitly
\begin{itemize}
\item $(I \boxtimes \bar{I})^\form{AB} \bbD_\form{A}\bbD_\form{B} = S_{\Ph}$
for $\cE^{(ab)_0} \ni \Ph^{ab} = \ph^{(a} \bar{\ph}^{b)_0}$,
\item $(I \bu \bar{I})^{BB'} \bbD^2_{BB'} = S_\Ph$ for
$\cE[2] \ni \Ph = \frac{1}{n} \ph^a \bar{\ph_a}$,
\item $[I,\bar{I}]^\form{A} \bbD_\form{A} = S_\Ph$ for
$\cE^a \ni \Ph^a 
= \ph^b\na_b \bar{\ph}^a - \bar{\ph}^b\na_b \ph^a$
(the Lie bracket of vector fields),
\item $\R \ni \langle I,\bar{I} \rangle = -4nI^\form{A} \bar{I}_\form{A}
= -2[\ph^a\na_a \na_b\bar{\ph}^b + \bar{\ph}^a\na_a\na_b\ph^b] 
+n(\na_a \ph^b)(\na_b \bar{\ph}^a)
-\frac{n-2}{n} (\na_a \ph^a)(\na_b \bar{\ph}^b)
-4nP_{ab} \ph^a\bar{\ph}^b.$  
\end{itemize}
In all these cases, the section $\Ph$ is a solution of the corresponding
equation \nn{1BGG}.  
\end{theorem}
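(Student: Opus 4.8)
The plan is to compute the composition $S_\ph S_{\bar\ph}$ on $\cE[w]$ purely in tractor terms and then recognise the answer, irreducible piece by irreducible piece, as a sum of canonical symmetries. Writing $I:=I_\ph$, $\bar I:=I_{\bar\ph}$, the starting point is \nn{SS}: since $I$ is parallel, $S_\ph S_{\bar\ph} = I^\form{A}\bar I^\form{B}\bbD_\form{A}\bbD_\form{B}$ on $\cE[w]$. Into this I would substitute the pointwise $SO(h)$-decomposition \nn{DDdec} of $\bbD_\form{A}\bbD_\form{B}$ --- the Cartan part ($\myng{2,2}_0$), the $h_{A^0B^0}\bbD^2_{(A^1B^1)_0}$ part ($\myng{2}_0$), the $h_{A^0B^0}h_{A^1B^1}\bbD^\form{A}\bbD_\form{A}$ part ($\R$), and the skew part $-2h_{A^0B^0}\bbD_{A^1B^1}$ ($\myng{1,1}$) --- together with the corresponding decomposition \nn{IIdecomp} of $I^\form{A}\otimes\bar I^\form{B}$ into $(I\boxtimes\bar I)$, $\langle I,\bar I\rangle$, $(I\bu\bar I)$ and $[I,\bar I]$. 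Because $\bbD_\form{A}\bbD_\form{B}$ carries no $\myng{1,1,1,1}$ or $\myng{2,1,1}_0$ component (visible from \nn{DDdec}), those components of $I^\form{A}\otimes\bar I^\form{B}$ contract to zero, and by orthogonality of inequivalent components only the four matching pairs survive; this reproduces the four summands in the statement. The only piece requiring more than matching of constants is the scalar one: one must evaluate $\bbD^\form{A}\bbD_\form{A}$ on $\cE[w]$, which by \nn{DD} and the contraction identities \nn{trmet} for $X,Y,Z$ and the contractions for $\X,\Y,\Z,\W$ collapses (all derivative and $P$-terms drop out) to a fixed quadratic in $w$ proportional to $w(n+w)$; combined with the normalisation $-\tfrac1{2n(n+1)(n+2)}$ from \nn{IIdecomp} this yields the coefficient $\tfrac{w(n+w)}{n(n+1)(n+2)}$.

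Next I would identify each of the four pieces with the advertised canonical symmetry. Each of $I\boxtimes\bar I$, $I\bu\bar I$, $[I,\bar I]$ is a pointwise $SO(h)$-projection of the parallel tractor $I^\form{A}\otimes\bar I^\form{B}$, hence is itself parallel, and it lies in the irreducible tractor bundle that is the image of the splitting operator \nn{split} for, respectively, $\cE^{(ab)_0}$ ($p=2$, $r=0$), $\cE[2]$ ($p=0$, $r=1$), $\cE^a$ ($p=1$, $r=0$); by the basic fact recalled around \nn{split} --- that a section of such a bundle is parallel precisely when it is $I_\Ph$ for a solution $\Ph$ of \nn{1BGG} --- each piece equals the canonical symmetry $S_\Ph$ for a unique solution $\Ph$, which in particular proves the final sentence of the theorem. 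To pin down $\Ph$ explicitly I would use Proposition \ref{can:prop}(i): $S_\Ph$ has leading term $\Ph$. Using $2\X_\form{A}^{\,a}I_\ph^\form{A}=\ph^a$ (Proposition \ref{lie}) and the top-order term $4\X_\form{A}^{\,a}\X_\form{B}^{\,b}\na_a\na_b$ of \nn{DD}, the formal-order-two part of $S_\ph S_{\bar\ph}$ on a density bundle is $\ph^a\bar\ph^b\na_a\na_b = \ph^{(a}\bar\ph^{b)_0}\na_a\na_b + \tfrac1n(\ph^c\bar\ph_c)\De$, so $\Ph^{ab}=\ph^{(a}\bar\ph^{b)_0}$ for the $\boxtimes$-piece and $\Ph=\tfrac1n\ph^c\bar\ph_c$ for the $\bu$-piece (the bracket and scalar pieces have formal order $\le 1$, by Proposition \ref{can:prop}(iii), so contribute nothing at order two). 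For the $\myng{1,1}$-piece I would instead use Proposition \ref{lie} directly: $S_\ph = I_\ph^\form{A}\bbD_\form{A} = L_\ph$ on natural bundles, whence $S_\ph S_{\bar\ph} - S_{\bar\ph}S_\ph = [L_\ph,L_{\bar\ph}] = L_{[\ph,\bar\ph]} = S_{[\ph,\bar\ph]}$, and since the skew-in-$(\ph,\bar\ph)$ part of the tractor formula is exactly $[I,\bar I]^\form{A}\bbD_\form{A}$ this both identifies it with $S_\Ph$ for $\Ph^a$ the vector-field bracket and fixes the normalisation of $[I,\bar I]$ in \nn{IIirr}. Finally the explicit value of $\langle I,\bar I\rangle = -4nI^\form{A}\bar I_\form{A}$ in terms of $\ph$, $\bar\ph$ and $P_{ab}$ follows by substituting the explicit form \nn{Iph} of $I_\ph$ for a conformal Killing vector and expanding.

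The main obstacle is the bookkeeping of normalisation constants: the skew tractor conventions mix factors of $2$ (contrast $\X^\form{A}_{\,a}\Y_\form{A}^{\,c}=\tfrac12\delta_a^c$ with $\Z^\form{A}_{\,\form{a}}\Z_\form{A}^{\,\form{c}}=\delta^{c^1}_{a^1}\delta^{c^2}_{a^2}$), and these must be reconciled with the projector normalisations in \nn{IIirr} and the coefficients in \nn{DDdec} so that the four summands come out with exactly the claimed coefficients $1$, $1$, $\tfrac12$ and $\tfrac{w(n+w)}{n(n+1)(n+2)}$ --- in particular the cancellation producing the quadratic $w(n+w)$ in the scalar term needs to be checked carefully. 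A secondary point is to invoke cleanly the fact that the splitting operator \nn{split} realises the full space of parallel sections of each of the relevant irreducible tractor bundles, so that parallelness of a piece forces it to be a genuine $S_\Ph$; this is the standard BGG fact underlying Definition \ref{cansym}, which the earlier parts of the paper already take for granted.
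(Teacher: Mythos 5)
Your proposal is correct and follows essentially the same route as the paper: both use \nn{SS}, contract the decompositions \nn{IIdecomp} and \nn{DDdec}, and invoke the scalar identity $\bbD^\form{A}\bbD_\form{A}f=-2w(n+w)f$ to land on the displayed formula, then identify each summand via parallelness/irreducibility of the projected tractors. The one point where you depart slightly is in pinning down the sections $\Ph$: the paper reads them off directly from the explicit splitting formula \nn{Iph} together with \nn{IIirr}, whereas you use Proposition~\ref{can:prop}(i) and the order-two symbol for the $\boxtimes$ and $\bu$ pieces and the cleaner Lie-derivative commutator identity $[L_\ph,L_{\bar\ph}]=L_{[\ph,\bar\ph]}$ for the $\myng{1,1}$ piece; both routes are valid, and yours makes the bracket identification more conceptual at the cost of still needing \nn{Iph} anyway for the explicit scalar $\langle I,\bar I\rangle$ formula. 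Your explicit remark that the $\myng{1,1,1,1}$ and $\myng{2,1,1}_0$ components of $I\otimes\bar I$ drop by orthogonality against \nn{DDdec} fills in a detail the paper leaves implicit.
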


\begin{proof}
The statement puts together the previous computations.
Following \nn{SS}, we need to decompose 
$I^\form{A} \bar{I}^\form{B} \bbD_\form{A} \bbD_\form{B}$
into canonical symmetries. This is provided by contracting right hand sides 
of \nn{IIdecomp} and \nn{DDdec}.
Using in addition $\bbD^\form{A} \bbD_\form{A} f = -2w(n+w)f$ for 
$f \in \cE[w]$ 
(which easily follows from \nn{DD}), the right hand side of
$S_\ph S_{\bar{\ph}}$ in the display above follows.

The components $I \boxtimes \bar{I}$, $I \bu \bar{I}$, $[I,\bar{I}]$
and $\langle I,\bar{I} \rangle$ are parallel (and irreducible) thus
their projecting parts $\Ph$ are solutions of the corresponding
equation from the family \nn{1BGG}. To prove the theorem, it remains
to identify how are these solutions are built from $\ph^a, \bar{\ph}^a
\in \cE^a$. Note
\begin{equation} \label{Iph}
I^\form{A} = \Y^\form{A}_{\,a} \ph^a 
+\frac{1}{2} \Z^\form{A}_{\,\form{a}} \na^{a^0} \ph^{a^1} 
+ \frac{1}{n} \W^\form{A} \na_a \ph^a 
+ \X^\form{A}_{\,a}[\frac{1}{n} \na_a \na_b\ph^b + P_{ab}\ph^b] 
\end{equation}
and similarly for $\bar{I}^\form{A}$ \cite{GoSiCkf}.
Now  the explicit form of such $\Ph$ for irreducible components of 
$I^\form{A} \otimes \bar{I}^\form{B}$ is easily obtained from
\nn{IIirr} for $I \bu \bar{I}$, $[I,\bar{I}]$ and $\langle I,\bar{I} \rangle$.
Since $\frac{1}{2}(I^\form{A} \bar{I}^\form{B} + I^\form{B} \bar{I}^\form{A})$
has the projecting part $\ph^{(a} \bar{\ph}^{b)}$, the case 
$I \boxtimes \bar{I}$ follows by irreducibility.
\end{proof}

To finish the proof of Theorem \ref{algmain}, observe the following.
First we have an associative algebra morphism 
$$
\bigotimes \frak{g} \to \mathcal{A}_k
$$ determined by \nn{key}. That this is surjective is an easy
consequence of Theorem \ref{csymc} since the canonical symmetries
$S_\phi$ of \nn{can} clearly arise in the range of \nn{key}.  We want
to find all corresponding relations, that is identify the two sided
ideal annihilated by this map.  The ideal certainly contains \nn{gen},
as follows from Theorem \ref{dec2can} with $w = -\frac{n}{2}+k$. That
it also contains $\boxtimes^{2k} \myng{1}$ is due to the following
result.
\begin{lemma}\label{extra}
Assume $I \in \boxtimes^{2k} \myng{1}$ is parallel. Then 
$I = I_\ph$ for $\ph \in \cE[2k]$ and 
$S_\ph =\ph  P_k:\cE[-\frac{n}{2}+k] \to \cE[-\frac{n}{2}+k]$.
\end{lemma}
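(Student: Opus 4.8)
The plan is to identify the parallel tractor $I \in \boxtimes^{2k}\myng{1}$ with the Cartan-component tractor $I_\ph$ attached to some density $\ph \in \cE[2k]$, and then to unwind the definition \nn{can} of $S_\ph$ using the identity $(-1)^k X_{A_1}\cdots X_{A_k} P_k = D_{A_1}\cdots D_{A_k}$ on $\cE[-n/2+k]$. First I would note that the irreducible $\frak{g}$-module $\boxtimes^{2k}\myng{1}$ is exactly the type $\cK_0^{2k}$ in the notation of \nn{K} (i.e.\ $p=0$, $r=k$), which by \nn{split} is the prolongation module of solutions of \nn{1BGG} with $p=0$, $r=k$; that is, of densities $\ph \in \cE[2k]$ satisfying $[\na^{2k+1}\ph]|_\boxtimes = 0$. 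Since $I$ is assumed parallel, the bijection between parallel tractors in $\boxtimes^{2k}\myng{1}$ and solutions of this equation gives a unique such $\ph$ with $I = I_\ph$; in a flat scale $\ph$ is simply a polynomial of degree at most $2k$.

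Next I would compute $S_\ph$ from \nn{can}. Here $p=0$ and $r=k$, so $S_\ph = I_\ph^{B_1B_1'\ldots B_kB_k'}\,\bbD^2_{B_1B_1'}\cdots\bbD^2_{B_kB_k'}$ acting on $\cE[-n/2+k]$. Using the definition $\bbD^2_{AB} = -(wh_{AB} + X_{(A}D_{B)})$ from \nn{doubleD}, and the fact that $I_\ph$ is trace-free (so the $h_{AB}$ terms drop out under contraction) and totally symmetric in its index pairs, each factor $\bbD^2_{B_iB_i'}$ contributes effectively $-X_{B_i}D_{B_i'}$. Care is needed because the $D$'s land on lower weights: after applying $j$ of the $\bbD^2$ operators the weight has shifted, but $\bbD^2_{AB}$ is defined on every weight, and the point is that the accumulated $X$'s pair against the symmetric trace-free $I_\ph$ while the accumulated $D$'s stack up. I expect that, after tracking the weight-dependent scalars from the $X_{(A}D_{B)}$ symmetrisation and the fact that $X^A D_A$ is a nonzero multiple of the identity on each relevant weight, one gets $S_\ph = c\, I_\ph^{B_1\cdots B_k'}\, X_{B_1}\cdots X_{B_k}\, D_{B_1'}\cdots D_{B_k'}$ for a nonzero constant $c$, and then $D_{B_1'}\cdots D_{B_k'} = (-1)^k X_{B_1'}\cdots X_{B_k'} P_k$ on $\cE[-n/2+k]$. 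Contracting the $2k$ copies of $X$ into $I_\ph$ recovers (a multiple of) the projecting part of $I_\ph$, which is exactly $\ph$, giving $S_\ph = \ph\, P_k$ up to normalisation; absorbing the normalisation into the identification $I \leftrightarrow \ph$ (as is done tacitly elsewhere in the paper) yields the stated equality.

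The main obstacle is the bookkeeping in the previous paragraph: one must verify that all the weight-dependent scalar factors produced by repeatedly applying $\bbD^2$ (whose formula explicitly involves $w$) are nonzero at the relevant weights $-n/2+k, -n/2+k-1, \ldots$, and that the trace terms really do annihilate against $I_\ph$ at every stage, not just the first. A clean way to organise this is to use the commutation relation \nn{[D,X]}, namely $[D_A,X_B] = -2\bbD_{AB} + (n+2w)h_{AB}$, to push all the $X$'s to the left past the $D$'s; the $\bbD_{AB}$ correction terms are valued in $\Lambda^2\cT$ tensored with lower tractor powers and vanish upon contraction with the symmetric trace-free $I_\ph$, while the $h_{AB}$ terms vanish by trace-freeness, so only the ``leading'' rearrangement survives. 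This reduces the computation to the single scalar identity $X^A D_A = (\text{nonzero})\cdot\id$ on each weight $\cE_\bullet[w]$ with $w \neq 1-n/2$, which holds here since the relevant weights are $-n/2+j$ for $1 \le j \le k$ and $n \ge 3$. Once that is in place, the conclusion $S_\ph = \ph P_k$ on $\cE[-n/2+k]$ is immediate, and this is precisely what shows the image of $\boxtimes^{2k}\myng{1}$ in $\otimes^{2k}\frak{g}$ lies in the ideal defining $\cA_k$.
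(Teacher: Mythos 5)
Your proposal is correct and follows essentially the same route as the paper: identify $I=I_\ph$ by parallelism and irreducibility, reduce $\bbD^2_{AB}$ (equivalently $\cD^2_{AB}$, by Proposition \ref{fund=double}) to $-X_{(A}D_{B)}$ against the trace-free $I_\ph$, commute $X$'s past $D$'s using \nn{[D,X]} with the skew and trace correction terms annihilated by the symmetry and trace-freeness of $I_\ph$, and finish with $D_{(A_1}\cdots D_{A_k)_0}=(-1)^kX_{(A_1}\cdots X_{A_k)_0}P_k$ plus the identification of the $X$-contraction of $I_\ph$ with $\ph$ via \nn{split}. The paper's version is only slightly more compact, observing $\cD^2_{(CD)_0}=-X_{(C}D_{D)_0}=-D_{(C}X_{D)_0}$ directly rather than spelling out the commutator bookkeeping and weight-tracking you flag.
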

\

\begin{proof}
$I \in \boxtimes^{2k} \myng{1}$ means 
$I^{A_1A_1' \cdots A_kA_k'} \in \cE^{(A_1A_1' \cdots A_kA_k')_0}$ and
$I = I_\ph$ for $\ph \in \cE[2k]$ is due to the irreducibility of $I$ and the fact that is parallel.
Then 
$$
S_\ph = I^{A_1^{}A_1' \cdots A_k^{}A_k'}_\ph 
\cD^2_{A_1^{}A_1'} \cdots \cD^2_{A_k^{}A_k'}.
$$
Now observe 
$\cD^2_{(CD)_0} = -X_{(C}D_{D)_0}$ and $X_{(C}D_{D)_0} = D_{(C}X_{D)_0}$,
cf.\ \nn{[D,X]}. On the other hand $D_{(A_1} \cdots D_{A_k)_0} = 
(-1)^k X_{(A_1} \cdots X_{A_k)_0} P_k$ on $\cE[-\frac{n}{2}+k]$ \cite{Gosrni,GoPetLap}.
Thus 
$\cD^2_{A_1^{}A_1'} \cdots \cD^2_{A_k^{}A_k'} = 
X_{A_1^{}}X_{A_1'} \cdots X_{A_k^{}}X_{A_k'}P_k^{}$ on $\cE[-\frac{n}{2}+k]$.
The rest follows from the relation between $\ph$ and $I_\ph$ in \nn{split}.
\end{proof}

We have found the generators of the ideal in $\bigotimes \frak{g}$
described in Theorem \ref{algmain}; it remains to show that this ideal
large enough to have $\cA_k$ as the resulting quotient.  Essentially we follow
\cite{EaLap,EaLeLap2} where cases $k=1$ and $k=2$ are studied. We
assume $k \geq 1$ here.  Since we know $\cA_k$, as a vector space,
from \nn{Avec}, it is sufficient to consider the corresponding graded
algebra (i.e.\ the symbol algebra of $\cA_k$.)  The corresponding
graded ideal contains $I_1 \otimes I_2 - I_1 \boxtimes I_2 - I_1 \bu
I_2$ for $I_1, I_2 \in \frak{g}$, cf.\ \nn{gen}, hence it contains
$\frak{g} \wedge \frak{g}$. 
Therefore we can pass to $\bigodot
\frak{g}$ 
and we write $\cI$ for the ideal in $\bigodot
\frak{g}$ which is the image of the ideal of Theorem \ref{algmain}.
We claim  
that as a graded structure $\cA_k = \bigoplus \cA_{k,t}$
where the $\cA_{k,t}$ are defined as the submodules
satisfying
$$
\cA_{k,t} = \Bigl\{
X \in \raisebox{-13pt}{$
\overbrace{\begin{picture}(60,30)
\put(0,5){\line(1,0){60}}
\put(0,15){\line(1,0){60}}
\put(0,25){\line(1,0){60}}
\put(0,5){\line(0,1){20}}
\put(10,5){\line(0,1){20}}
\put(20,5){\line(0,1){20}}
\put(50,5){\line(0,1){20}}
\put(60,5){\line(0,1){20}}
\put(35,10){\makebox(0,0){$\cdots$}}
\put(35,20){\makebox(0,0){$\cdots$}}
\end{picture}}^{t}$} \quad \text{\ s.t.\ }
\underbrace{\trace( \ldots (\trace}_k (X)..)) =0
\Bigr\} \subseteq \bigodot {\!}^t \frak{g}.
$$ The traces are taken via the tractor metric and note that the trace
condition arises from Lemma \ref{extra} above. As a vector space this
is the right answer as, by standard representation theory, $\cA_{k,t}
= \bigoplus_{j+2i = t} \cK_i^j$, $t \geq 1$.  
To finish the
proof, we need to show $\bigodot^t \frak{g} = \cA_{k,t} \oplus \cI_t$
(as vector spaces) where $\cI_t = \cI \cap \bigodot^t \frak{g}$, $t
\geq 1$.  This is based on the following

\begin{lemma*}
Assume $t \geq 3$, $k \geq 1$. Then
$$
\Bigl(\: \myng{1,1} \otimes \cA_{k,t-1} \Bigr) 
\cap \Bigl( \cA_{k,t-1} \otimes \myng{1,1} \:\Bigr) = 
\begin{cases}
\cA_{k,t} & t\not=2k \\
\cA_{k,t} \oplus \raisebox{-1pt}{$\boxtimes^{2k}$} \myng{1}
& t=2k.
\end{cases}
$$
\end{lemma*}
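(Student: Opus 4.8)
The plan is to compute the intersection directly inside $\frak{g}^{\otimes t}=(\Lambda^2\cT)^{\otimes t}$, the space of tractor tensors with $t$ skew index‑pairs, by separating the two conditions that cut out $\cA_{k,\bullet}$: the $GL(\cT)$–Young symmetry of rectangular type and the $k$–fold‑trace‑free condition. Recall from the discussion around \nn{K} and \nn{split} that, as a submodule of $\bigodot^{t-1}\frak{g}$, $\cA_{k,t-1}=\bigoplus_{i=0}^{k-1}\cK_i^{\,t-1-2i}$, where $\cK_i^{\,j}$ is the irreducible tractor module of two‑row Young type $(j+2i,\,j)$; equivalently, inside the rectangular component spanned by the $\cK_i^{\,(t-1)-2i}$ ($i\ge 0$) it is the kernel of the $k$‑fold iterated $h$‑trace, and it is $GL(\cT)$–Young symmetric of type $(t-1,t-1)$ (fully symmetric in the $t-1$ skew pairs, every skew‑symmetrisation of three tractor indices vanishing). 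A first, cheap observation: since $\cA_{k,t-1}\subseteq\bigodot^{t-1}\frak{g}$, the subspace $\frak{g}\otimes\cA_{k,t-1}$ is invariant under permuting the pairs $2,\dots,t$ and $\cA_{k,t-1}\otimes\frak{g}$ under permuting $1,\dots,t-1$; as these two sets of transpositions generate $S_t$, the intersection $\cI_t':=(\frak{g}\otimes\cA_{k,t-1})\cap(\cA_{k,t-1}\otimes\frak{g})$ already lies in $\bigodot^t\frak{g}$.

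The first real step is to force the full $(t,t)$ Young symmetry. An element $X\in\cI_t'$ carries no skew triple of indices lying inside the pairs $2,\dots,t$, and likewise none inside $1,\dots,t-1$; since $X$ is symmetric under permuting \emph{all} $t$ pairs, any triple of tractor indices may be transported by a pair‑permutation into one of these two sub‑blocks, and hence its skew‑symmetrisation vanishes. So $X$ lies in $S^{(t,t)}\cT\subseteq\bigodot^t\frak{g}$, the $(t,t)$‑Young‑symmetric tractor tensors. (For $t=3$ the ``transport'' argument needs the small extra remark that $S^{(3,3)}\cT$ is the unique irreducible summand of $\bigodot^3\frak{g}$ on which every within‑two‑pair skew triple vanishes — the other summands have a column of length at least three visible already on two of the pairs; I would record this.)

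Next, decompose $S^{(t,t)}\cT=\bigoplus_{i\ge 0}\cK_i^{\,t-2i}\ \oplus\ (\text{modules of strictly shorter first row})$ under $\frak{so}(h)$. Membership in $\frak{g}\otimes\cA_{k,t-1}$, resp.\ $\cA_{k,t-1}\otimes\frak{g}$, forces the $k$‑fold $h$‑trace over the last, resp.\ the first, $t-1$ pairs to vanish. On a component $\cK_i^{\,t-2i}$, which is $h$‑trace‑free on its own indices and symmetric in its skew pairs, the $k$‑fold trace over any $t-1$ of the $t$ pairs is a nonzero multiple of the $k$‑fold trace over all $t$ pairs \emph{provided there is room to host $k$ independent $h$‑contractions inside $t-1$ pairs}, i.e.\ provided the ``missing'' pair is not needed; a short count of $h$‑placements shows this fails only for the component whose embedding into $S^{(t,t)}\cT$ uses exactly $k$ metric factors and whose Young type then has second row $t-2k=0$, i.e.\ precisely when $t=2k$, the component being $\cK_k^{\,0}=[(2k,0)]_0=\boxtimes^{2k}\myng{1}$. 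For all other components (including all those of shorter first row, which are annihilated already by one one‑sided trace) the two one‑sided conditions coincide and single out $\bigoplus_{i=0}^{k-1}\cK_i^{\,t-2i}=\cA_{k,t}$. This gives $\cI_t'\subseteq\cA_{k,t}$ when $t\ne 2k$ and $\cI_t'\subseteq\cA_{k,t}\oplus\boxtimes^{2k}\myng{1}$ when $t=2k$.

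For the reverse inclusion, each $\cK_i^{\,t-2i}$ with $i\le k-1$ occurs in $\frak{g}\otimes\cK_i^{\,t-1-2i}$ and in $\cK_i^{\,t-1-2i}\otimes\frak{g}$ (the component of $\Lambda^2\cT\otimes\cK$ obtained by adding one box to each row), and these two occurrences coincide inside $\frak{g}^{\otimes t}$ because both the Young and the trace conditions survive the symmetrisation; when $t=2k$, $\boxtimes^{2k}\myng{1}=\cK_k^{\,0}$ occurs in $\frak{g}\otimes\cK_{k-1}^{\,1}$ and in $\cK_{k-1}^{\,1}\otimes\frak{g}$ via the component adding a box to the first row and deleting one from the second, the two copies agreeing because $\boxtimes^{2k}\myng{1}=\boxtimes^{2k}\cT$ sits in $\frak{g}^{\otimes 2k}$ as the two‑sided intersection of the lower Cartan powers — this is exactly the $I_\ph$/$P_k$ realisation of Lemma \ref{extra}. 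Combining the three steps yields the asserted equality, and a dimension check against the known vector‑space identity $\cA_{k,t}=\bigoplus_{j+2i=t}\cK_i^{\,j}$ confirms nothing is missed. \textbf{Main obstacle.} The delicate point is the third step together with the copy‑matching in the reverse inclusion: the modules $\cK_i^{\,j}$, and the shorter‑first‑row modules, can appear in $\frak{g}^{\otimes t}$ with multiplicity greater than one, so ``appears in both subspaces'' is genuinely weaker than ``lies in the intersection''; the tensor realisation of step one is what reduces everything to index (anti)symmetrisations and $h$‑contractions, but identifying $t=2k$ as the \emph{unique} value at which a further Cartan component survives requires carrying out the borderline $h$‑trace computation on $\cK_k^{\,t-2k}$ explicitly — the same flavour of combinatorial bookkeeping that appeared in Theorem \ref{reg} and Proposition \ref{biggest}.
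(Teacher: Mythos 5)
Your overall strategy is sound and ends in the right place, but it is a genuinely different route from the paper's, and the middle step carries real risk. The paper does not decompose $S^{(t,t)}\cT$ under $\frak{so}(h)$ and then chase $h$-placements component by component; instead, after establishing the $(t,t)$ Young symmetry (your step two, which the paper either cites to \cite[Theorem 2]{EaCa} or does directly), it makes a single index-counting observation: any composition of $k$ tractor-metric traces touches at most $2k$ of the $2t$ tractor indices, hence at most $2k$ of the $t$ form-index pairs. For $t>2k$ there is therefore a free pair, and the $S_t$-symmetry lets one slide that free pair to the first (or last) slot, reducing the vanishing of the $k$-fold trace to the one-sided condition already imposed by membership in $\cA_{k,t-1}\otimes\frak{g}$ (resp.\ $\frak{g}\otimes\cA_{k,t-1}$). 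This kills all of $S^{(t,t)}\cT$ outside $\cA_{k,t}$ in one stroke, with no need to identify which $\cK_i^{\,t-2i}$ (or shorter-first-row piece) one is sitting in, nor to argue that a one-sided trace is ``a nonzero multiple of'' a full trace. For $t=2k$ the same counting shows the obstruction is concentrated exactly in the case where all $2k$ pairs are touched, and the resulting object lies in $\bigodot^{2k}\cT$ trace-free; the reverse inclusion of $\boxtimes^{2k}\myng{1}$ is established by the explicit tensor $G^{A_1^0\ldots A_{2k}^0}h^{A_1^1\cdots A_{2k}^1}$ of \nn{incl}, a purely algebraic verification rather than a detour through Lemma \ref{extra} and $P_k$.

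The gap in your proposal is precisely the one you flag at the end, and it is worth being blunt about why it is a gap and not merely bookkeeping. Your step three rests on the assertion that on a component $\cK_i^{\,t-2i}\subset S^{(t,t)}\cT$ the $k$-fold trace over any $t-1$ of the pairs is a nonzero multiple of the full $k$-fold trace ``provided there is room'', and that the shorter-first-row summands are ``annihilated already by one one-sided trace''. The first claim, as phrased, is not a well-defined statement (the two traces land in different target spaces and so cannot be proportional), and the intended comparison must be rephrased as a simultaneous vanishing criterion; establishing it then really does require the explicit $h$-placement count you defer. The second claim is also not automatic: for instance a component of $S^{(t,t)}\cT$ of type different from any irreducible that a $k$-fold partial trace can reach is killed by Schur's lemma, not by the trace condition being effective, and so you must separately argue such components cannot lie in $\bigl(\frak{g}\otimes\cA_{k,t-1}\bigr)\cap\bigl(\cA_{k,t-1}\otimes\frak{g}\bigr)$ through the Young-symmetry condition on $\cA_{k,t-1}$ rather than through its trace condition. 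None of this is fatal, but it is precisely the kind of delicate multiplicity/copy-matching work you identify as the ``main obstacle'', and it is avoided entirely by the paper's free-index argument. If you want to pursue your decomposition route you should (i) replace the ``nonzero multiple'' phrasing by the clean statement that, on a fixed $\frak{so}(h)$-irreducible constituent, all one-sided $k$-fold traces vanish iff the two-sided ones do whenever $t>2k$, proved by the index shuffle, and (ii) treat the shorter-first-row constituents by showing they fail the $(t-1,t-1)$ Young condition implicit in $\cA_{k,t-1}$, not the trace condition. With those repairs your argument is correct but strictly more work than the paper's.
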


\begin{proof}
The case $t<2k$ follows from \cite[Theorem 2]{EaCa} or can be easily
checked directly. Assume $t>2k$.  The inclusion ``$\supseteq$'' is
obvious. To show ``$\subseteq$'' consider the tensor $F^\form{A_1
\ldots A_t}$ in the left hand side of the display.  Then
$$
F^\form{A_1 \ldots A_i  \ldots A_j \ldots A_t} 
= F^\form{A_1 \ldots A_j \ldots A_i \ldots  A_t}
$$
for any $1 \leq i<j \leq t$. From this it easily follows that the skew 
symmetrization over any three indices of $F$ is zero. 
(This and the last display also follow from \cite[Theorem 2]{EaCa}.)
Now any
composition of $k$ traces applied to $F$ affects $2k$ indices among
$2t$ indices $A_1^0,A_1^1, \ldots, A_t^0,A_t^1$, i.e.\ at most $2k$
form indices among $\form{A}_1, \ldots, \form{A}_t$.  Thus there is a
free form index $\form{A}_i$ (as $t>2k$) and the inclusion
``$\subseteq$'' follows from the symmetry 
given by the previous display.

Assume $t=2k$. Following the previous case ``$\subseteq$'', the difference 
appears
only if a composition of $k$ traces affects all $2k$ form indices
of $F$. After taking of such composition of traces we obtain a tensor in
$\bigodot^t \myng{1}$ and one easily sees this tensor is trace free. 
On the other hand, for any symmetric trace free tensor 
$G^{A_1^0 \ldots A_{2k}^0} \in  \raisebox{-1pt}{$\boxtimes^{2k}$} \myng{1}$ 
one has
\begin{equation} \label{incl}
G^{A_1^0 \ldots A_{2k}^0} h^{A_1^1 \cdots A_{2k}^1}  
\in  \Bigl(\: \myng{1,1} \otimes \cA_{k,t-1} \Bigr) 
\cap \Bigl( \cA_{k,t-1} \otimes \myng{1,1} \:\Bigr)
\end{equation}
which can be easily verified by direct computation. Here
$h^{A_1^1 \cdots A_{2k}^1} = h^{(A_1^1A_2^1} \cdots h^{A_{2k-1}^1A_{2k}^1)}$
and recall we implicitly skew over the couples $A_i^0A_i^1$ for
$1 \leq i \leq 2k$.
\end{proof}

The final step is to use 
that for each $s$, there is (by standard theory) a projection $
\odot^s\frak{g} \to \cA_{k,s}$ and that the induced projections $P_t:
\bigodot^t \frak{g} \to \frak{g} \otimes \cA_{k,t-1}$ and $Q_t:
\bigodot^t \frak{g} \to \cA_{k,t-1} \otimes \frak{g}$ 
have kernel in, respectively $\frak{g} \otimes \cI_{t-1}$ and
$\cI_{t-1} \otimes \frak{g}$ (and hence in both cases in $\cI_t$)
where for each non-negative integer $s$, $\cI_{s} = \cI \cap
\bigodot^{s} \frak{g}$.  Therefore, by obvious dimensional considerations, 
\begin{equation} \label{bang}
\bigodot\!{}^t \, \myng{1,1} = 
(\im P_t \cap \im Q_t) \oplus (\ker P_t + \ker Q_t), \ \
t \geq 3
\end{equation}
and the claim above and then Theorem \ref{algmain} follow by induction.

\end{document}